\title{Fisher information and trajectorial interpretation to the Itô--Langevin relative entropy dissipation}
\author{Jiaming Chen\thanks{chen.jiaming@cims.nyu.edu}}
\affil{Courant Institute of Mathematical Sciences, New York University}
\date{\today}
\numberwithin{equation}{section}
\numberwithin{equation}{section}
\newcommand\normx[1]{\lVert#1\rVert}
\newcommand\normy[1]{\big\lVert#1\big\rVert}
\theoremstyle{definition}
\newtheorem{theorem}{Theorem}[section]
\theoremstyle{definition}
\newtheorem{corollary}[theorem]{Corollary}
\theoremstyle{definition}
\newtheorem{lemma}[theorem]{Lemma}
\theoremstyle{definition}
\theoremstyle{definition}
\theoremstyle{definition}
\theoremstyle{definition}
\newenvironment{sequation}{\small\begin{equation}}{\end{equation}}
\begin{document}

\maketitle

\begin{abstract}

The dissipation phenomena of relative entropy from an Itô--Langevin dynamical system is a classic topic from stochastic analysis. Relying on the time-reversal of diffusions, a novel trajectorial approach investigates the pathwise behavior of relevant entropy processes, reveals more information from the delicate random structure, and eventually retrieves the known classical results. In essence, this approach provides novel insights and rederives the known results of the Itô--Langevin dynamics, as will be presented in this expository article. Another part is to view the stochastic time-evolution through the lens of the Wasserstein space, under which we observe the geometric feature of steepest descent of the entropy decay as well as its exponential rate of velocity.
    
\end{abstract}

\renewcommand{\thefootnote}{\fnsymbol{footnote}} 
\footnotetext{\emph{Mathematics subject classification 2020:} 60G07, 60H10, 60J60.}
\renewcommand{\thefootnote}{\arabic{footnote}} 

{%\footnotesize
\tableofcontents}

%------------------------------
\begin{comment}

\paragraph{Acknowledgement}
This expository article is developed from my Master Thesis at ETH Zürich under the supervision of Prof. Dr. Martin Schweizer. I gratefully thanks Courant Institute of Mathematical Sciences at New York University for the invaluable support in finalizing this work.

\paragraph{Acknowledgement}
This expository article is developed from my Master Thesis at Eidgenössische Technische Hochschule. I would like to express my deepest gratitude to my thesis advisor, Prof. Dr. Martin Schweizer at ETH Zürich. His guidance and insightful comments have been invaluable throughout the development of this work. His dedication to excellence and his taste to the highest elegance have significantly enhanced the quality of this work. I am truly privileged to work under the guidance of a leading mathematician.
\end{comment}

\begin{comment}

\begin{CJK*}{UTF8}{bsmi}
\medskip
{%\footnotesize
殷仲文風流儒雅，海內知名。世異時移，出為東陽太守。常忽忽不樂，顧庭槐而歎曰：「此樹婆娑，生意盡矣！」

至如白鹿貞松，青牛文梓，根柢盤魄，山崖表裡。桂何事而銷亡，桐何為而半死？昔之三河徙植，九畹移根。開花建始之殿，落實睢陽之園。聲含嶰谷，曲抱《雲門》。將雛集鳳，比翼巢鴛。臨風亭而唳鶴，對月峽而吟猿。

若乃山河阻絕，飄零離別。拔本垂淚，傷根瀝血；火入空心，膏流斷節。橫洞口而欹臥，頓山腰而半折。文斜者百圍冰碎，理正者千尋瓦裂。戴癭銜瘤，藏穿抱穴；木魅賜睒，山精妖孽。

況復風雲不感，羈旅無歸，未能采葛，還成食薇。沉淪窮巷，蕪沒荊扉。既傷搖落，復嗟變衰。《淮南子》云：「木葉落，長年悲。」斯之謂矣。

乃歌曰：「建章三月火，黃河萬里槎。若非金谷滿園樹，即是河陽一縣花。」桓大司馬聞而歎曰：「昔年種柳，依依漢南；今看搖落，淒愴江潭。樹猶如此，人何以堪！」

\medskip
\noindent
選自：庾信《庾子山集$\,\cdot\,$枯樹賦》, 參見 $\:$\cite{Owen}.
}
\end{CJK*}
\end{comment}
%------------------------------
{
\section{Introduction}\label{sec: intro}
When Schrödinger \cite{Schrödinger0} tried to explain why intelligent systems tend to have far more replication errors than general Statistical Thermodynamics, the concept of entropy was progressively developed in response to the observation that even for the most isolated physical systems, the level of their internal disorder significantly increases as time flows \cite{Abbott}.\par
This time-monotonous trend has been described as the dissipation of entropy, where the phrase was first adopted by Clausius \cite{Clausius}. In this \textbf{expository} article, we present a novel perspective to the entropy dissipation for a large class of models characterized by the Itô-Langevin stochastic differential equations.
%--------------------------------------
\subsection{Reviewing the literature}
Very heuristically, the notion of entropy and its dissipation had been widely discussed by Boltzmann \cite{Boltzmann1,Boltzmann2,Boltzmann3}, Gibbs \cite{Gibbs1,Gibbs2}, and Shannon \cite{Shannon1,Shannon2}, until unanimously accepted to be the metric which quantifies the level of disorder of a physical system. Several approaches from different disciplines have been developed to characterize the monotonicity of its time-evolution. For a recent overview, see Cover/Thomas \cite{Cover/Thomas}, Krzakala/Zdeborová \cite{Krzakala/Zdeborová} for Information Theory, see Kardar \cite{Kardar1,Kardar2} for Statistical Physics, and see Sudakov \cite{Sudakov} for Combinatorics. The notion of entropy has also been utilized in Mathematical Finance, see Choulli \cite{Choulli}, Laeven/Stadje \cite{Laeven/Stadje}, and Schweizer \cite{Schweizer}. Colloquially, the monotonous time-evolution of entropy indicates some irreversible change which is intrinsic to the physical system of interest, and on average tends to increase its disorder.\par
%---------------------------
Our discussion focuses on the concept of \textit{relative entropy} lying at the interdisciplinary realm between Stochastic Calculus and Interacting Particle Systems. The notion of relative entropy quantifies the complexity of the evolution of a family of time-parametrized probability measures, see \cite{Gaveau/Granger/Moreau/Schulman}. To summarize  precisely, we follow a probabilistic approach to formalize the Langevin diffusion as entropic gradient flux in an appropriately defined Wasserstein space. A recent breakthrough by Karatzas/Schachermayer/Tschiderer
\cite{Karatzas/Schachermayer/Tschiderer}, see also \cite{Karatzas/Tschiderer, Tschiderer/Yeung}, on the trajectorial interpretation to the essentially well-known \textit{de Bruijin identity}, see Brossier/Zozor \cite{Brossier/Zozor}, is our main theme and will be presented in the sequel.\par
%---------------------------
In this expository article, the family of time-parametrized probability measures on $\mathbb{R}^d$ will be denoted by $(P^\beta_t)_{t\geq0}$, where the $\beta$-script indicates that this evolution is placed under the presence of some smooth and time-homogeneous perturbations. The model of interest is the trajectorial dynamics of the relative entropy of $(P^\beta_t)_{t\geq0}$ against a $\sigma$-finite reference measure $Q$ on the Borel sets of $\mathbb{R}^d$. This trajectorial approach brings us a novel interpretation to the interplay between the relative entropy and other quantities, such as Fisher information, which reveals more internal structure from the stochastic system. Over recent years, apart from the trajectorial formulation of the relative entropy dissipation, similar trajectorial approaches have been successfully applied to the optimal stopping theorems by Davis/Karatzas \cite{Davis/Karatzas}, to the Doob martingale inequalities by Acciaio/Beiglböck/Penkner/Schachermayer/Temme \cite{Acciaio/Beiglböck/Penkner/Schachermayer/Temme}, and to the Burkholder-Davis-Gundy inequality by Beiglböck/Siorpaes \cite{Beiglböck/Siorpaes}. See also Gentil/Léonard/Ripani \cite{Gentil/Léonard/Ripani} for an application in the Schrödinger problem.
%-------------------------------
\subsection{Motivation and rough descriptions}
This expository article is an interpretation to the trajectorial Otto calculus, developed by Karatzas/Scha-chermayer/Tschiderer in \cite{Karatzas/Schachermayer/Tschiderer}, when applied to the scenario of relative entropy dissipation. In essence, the trajectorial formulation provides a novel approach to the well-known phenomena of entropy dissipation \cite{Stam}, through which we are able to witness more internal information from the stochastic dynamics of interest.\par
Define $\mathcal{C}\coloneqq\mathcal{C}(\mathbb{R}_+;\mathbb{R}^d)$, consisting of $\mathbb{R}^d$-valued continuous functions on $[0,\infty)$, to be the path space where we will place the stochastic dynamics. The time-evolution of the coordinate process $(X_t(\omega))_{t\geq0}=(\omega(t))_{t\geq0}$ for all $\omega\in\mathcal{C}$ is characterized by its distribution $\mathbb{P}^\beta$ on $\mathcal{C}$, i.e.~a probability measure on the Borel sets of $\mathcal{C}$. At each time point $t\geq0$, the marginal law of $X_t$ is denoted by $P^\beta_t$, a Borel probability measure on $\mathbb{R}^d$. Collectively, we have a time-parametrized family $(P^\beta_t)_{t\geq0}$ of Borel probability measures on $\mathbb{R}^d$ at hand. And we choose a fixed $\sigma$-finite Borel reference measure $Q$ on $\mathbb{R}^d$, such that each $P^\beta_t$ is absolutely continuous with respect to $Q$, for quantifying the relative entropy. An approach to compare $P^\beta_t$ to $Q$ is computing their Radon–Nikodým derivative. It is well-known that taking $\mathbb{P}^\beta$-expectation on the logarithmic derivative $\log dP^\beta_t/dQ$ yields the classical quantity of relative entropy between $P^\beta_t$ and $Q$.\par
The first insight of the trajectorial formulation is that we work with the process $(\log dP^\beta_t/dQ)_{t\geq0}$ and investigate its trajectorial properties. This approach deals with the pathwise behavior of the relevant processes. Achieving their pathwise limiting identities and subsequently taking $\mathbb{P}^\beta$-expectation  will give us the well-known results on the dissipation phenomena on the dynamics of relative entropy. In other words, this approach will reveal more information than the classical approach from the delicate pathwise structure of the stochastic dynamical system.\par
Another insight is letting the time-parametrized family $(P^\beta_t)_{t\geq0}$ undergo time-reversal. This less transparent approach is adopted because it is comparatively simpler than the original forward-time approach, especially in the computation of the semimartingale decomposition of the relative entropy process which is defined in Section \ref{sec: semimartingale}. To phrase this backward-time approach, we fix a compact time interval $[0,T]$ and consider the same family of Borel probability measures $(P^\beta_{T-t})_{0\leq t\leq T}$, indexed backward in time. Our main object of interest, i.e.~the trajectorial formulation, originates from observing the difference between the terms $\log dP^\beta_{T-t}/dQ$ and $\mathbb{E}^{\mathbb{P}^\beta}[\log dP^\beta_{t_0}/dQ|\sigma(P^\beta_{T-\theta},0\leq\theta\leq t)]$ with $0\leq t\leq T-t_0\leq T$. Dividing this difference term by $T-t_0-t$ and letting $t\nearrow T-t_0$, we obtain formally the trajectorial time-derivative of the relative entropy process, under the conditional knowledge of $\sigma(P^\beta_{T-\theta},0\leq\theta\leq t)$. And as stated in the previous paragraph, taking $\mathbb{P}^\beta$-expectation and with some additional regularity argument, we obtain the dissipation identity of the relative entropy.\par
%---------------------------------------
\subsection{Structure of this article}
From the above characterizations of the trajectorial formulation, we could retrieve the known classical results on the relative entropy dissipation by first taking $\mathbb{P}^\beta$-expectation and further collapsing the $\beta$-perturbation. Indeed, the classical \textit{de Bruijn identity} \cite[Equation 2.12]{Stam} follows immediately from the above procedures. The realization of the rough descriptions relies on the specification of the law $\mathbb{P}^\beta$ on the path space $\mathcal{C}$.\par
In Section \ref{sec: stochastic dynamics} of this expository article, we shall require $\mathbb{P}^\beta$ to be the law of the coordinate process $(X_t)_{t\geq0}$ so that it satisfies an Itô-Langevin stochastic differential equation (\ref{perturbed Itô-Langevin dynamics}), which describes a broad class of particle system dynamics, see \cite{Dougherty, Friedrich/Peinke}.  In Section \ref{sec: time-reversal}, some necessary terminologies and prerequisite theories on the time-reversal principle of diffusion processes are presented, before subsequently discussing the trajectorial formalism which heavily relies on the backward-time techniques.\par
It is essential to specify the semimartingale decomposition of the Radon–Nikodým derivative process $(dP^\beta_{T-t}/dQ)_{0\leq t\leq T}$, viewed under time-reversal, as well as the semimartingale decomposition of its logarithm $(\log dP^\beta_{T-t}/dQ)_{0\leq t\leq T}$. The computation and some related results on its martingale property will be displayed in the end of Section \ref{sec: semimartingale}. And their applications to the relative entropy dissipation are presented in Section \ref{sec: trajectorial dissipation}, where the known classical results are shown to be a derivation of the trajectorial approach.\par
In Section \ref{sec: wasserstein}, we characterize the time-evolution of the family $(P^\beta_t)_{t\geq0}$ of Borel probability measures on $\mathbb{R}^d$ through the lens of the suitably defined quadratic Wasserstein space, where its internal connections to the relative entropy dissipation via the Fisher information, as well as the steepest descent property in the unperturbed scenario, are revealed. This article is concluded with an argument of the exponential decay rate of the relative entropy quantity in the absence of perturbation, which can also be derived in parallel from the Bakry-Émery theory.
%-----------------------------------

%--------------------------------
%--------------------------------

\section{The stochastic Itô--Langevin dynamics}\label{sec: stochastic dynamics}
Ever since the seminal contribution \cite{Langevin} to the Brownian motion theory, the Itô--Langevin stochastic differential equations have played an eminent role in the non-equilibrium Statistical Mechanics \cite{Oliveira/Tomé, Ornstein/Uhlenbeck} and in the study of particle systems \cite{Gardiner, Schuss}. The fundamental idea of Itô-Langevin dynamics is to describe the diffusion particle in terms of the combination of deterministic forces and stochastic fluctuations.\par
In this expository article, we will also place the entropy dynamics under the constraint of an Itô--Langevin stochastic differential equation. To express the spirit of our trajectorial formulation and taking into account the conciseness of this exposition, we will focus on the simplest setting of a particle undergoing diffusion in a potential field. Notice that throughout our exposition, $\abs{\cdot}:\mathbb{R}\to\mathbb{R}_+$ denotes the absolute value of a real number, and $\norm{\cdot}:\mathbb{R}^d\to\mathbb{R}_+$ denotes the Euclidean $L^2$-norm of a vector in $\mathbb{R}^d$. \par
%------------------------------------------
\subsection{Itô--Langevin dynamics} Denote by $\psi(\cdot):\mathbb{R}^d\to\mathbb{R}_+$ the potential function, which is assumed to be of class $\mathcal{C}^\infty(\mathbb{R}^d;\mathbb{R}_+)$ and satisfies the linear growth condition $\normx{\nabla\psi(x)}\leq K(1+\norm{x})$ for all $x\in\mathbb{R}^d$ with some absolute constant $K>0$. This potential function determines the distribution $\mathbb{P}^\beta$ on $\mathcal{C}=\mathcal{C}(\mathbb{R}_+;\mathbb{R}^d)$ and henceforth also the time-evolution of the family of marginal distributions $(P^\beta_t)_{t\geq0}$ of the coordinate process $(X_t)_{t\geq0}$. Furthermore, starting from a fixed time point $t_0\geq0$, a smooth perturbation field $\beta(\cdot):\mathbb{R}^d\to\mathbb{R}^d$ started to influence the evolution. To capture the aforementioned setting, the Itô-Langevin stochastic differential equation,
\begin{equation}
    \label{perturbed Itô-Langevin dynamics}
    dX_t=-\big(\nabla\psi(X_t)+\beta(X_t)I_{\{t>t_0\}}\big)\,dt+dW^\beta_t,\quad\text{for all}\quad t\geq0\quad\text{with}\quad X_{0}\sim P_{0},
\end{equation}
constrains the time-evolution of the coordinate process $(X_t)_{t\geq0}$ in $\mathcal{C}$, and hence also the family $(P^\beta_t)_{t\geq0}$ of Borel probability measures on $\mathbb{R}^d$. Here, $(W^\beta_t)_{t\geq0}$ is a 
$d$-dimensional Brownian motion started from zero, independent of $X_0$. In (\ref{perturbed Itô-Langevin dynamics}), the initial distribution $P_0$ of $X_0$ is put to be absolutely continuous with respect to the Lebesgue measure on $\mathbb{R}^d$. Throughout this expository article, we assume that the perturbation $\beta(\cdot)$ is of compact support. Indeed, this regularity requirement simplifies our argument, and we have
%lemma, strong solution, or lemma 2.1
\begin{lemma}
    \label{lem: strong solution, perturbed Itô-Langevin}
    The Itô-Langevin diffusion (\ref{perturbed Itô-Langevin dynamics}) with initial distribution $P_0$ admits a pathwise unique strong solution $(X_t)_{t\geq0}$, whose distribution on $\mathcal{C}$ is denoted by $\mathbb{P}^\beta$. If we assume that the distribution $P_0$ of $X_0$ admits a finite second moment, i.e.~$\mathbb{E}^{\mathbb{P}^\beta}[\norm{X_0}^2]<\infty$ and that the potential function $\psi(\cdot)$ satisfies the following drift condition,
    \begin{equation}\label{drift condition}
        x\cdot\nabla\psi(x)\geq-C\norm{x}^2,\quad\forall~x\in\mathbb{R}^d\quad\text{with}\quad\norm{x}\geq R,\quad\text{for some}\quad C,R>0,
    \end{equation}
    then each $X_t$, with $t\geq0$, admits a finite second moment, i.e.~$\mathbb{E}^{\mathbb{P}^\beta}[\norm{X_t}^2]<\infty$.
\end{lemma}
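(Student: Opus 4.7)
The plan is to split the lemma into two parts: existence of a pathwise unique strong solution, and propagation of the second moment under the drift condition.

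For the existence and uniqueness claim, I would check that the drift field $b(t,x) = -\nabla\psi(x) - \beta(x)I_{\{t>t_0\}}$ satisfies the standard hypotheses of classical SDE theory (for instance, Karatzas--Shreve, Theorem 5.2.9). Smoothness of $\psi$ gives that $\nabla\psi$ is locally Lipschitz, and compact support of $\beta$ (together with its assumed smoothness) makes $\beta$ globally Lipschitz and bounded; the only subtlety is the time-discontinuity at $t=t_0$, which is harmless because we may construct the solution separately on $[0,t_0]$ and $[t_0,\infty)$ and concatenate. The linear-growth hypothesis on $\nabla\psi$ together with boundedness of $\beta$ gives global linear growth of $b$, which rules out explosion. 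Since the diffusion coefficient is the identity, the standard argument yields pathwise uniqueness and, via the Yamada--Watanabe principle, the existence of a strong solution whose law on $\mathcal{C}$ we denote $\mathbb{P}^\beta$.

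For the second-moment propagation, my plan is to apply Itô's formula to the function $x\mapsto\normx{x}^2$ and obtain
\begin{equation*}
\normx{X_t}^2 = \normx{X_0}^2 - 2\int_0^t X_s\cdot\nabla\psi(X_s)\,ds - 2\int_{t_0\wedge t}^{t} X_s\cdot\beta(X_s)\,ds + 2\int_0^t X_s\cdot dW^\beta_s + d\,t.
\end{equation*}
To handle the stochastic integral I would introduce the localizing sequence $\tau_n \coloneqq \inf\{s\geq0:\normx{X_s}\geq n\}$, so that the stopped stochastic integral is a genuine martingale with zero expectation. The drift condition (\ref{drift condition}) implies $-x\cdot\nabla\psi(x)\leq C\normx{x}^2 + M$ for a constant $M$ absorbing the behavior of $\nabla\psi$ on the compact ball $\{\normx{x}\leq R\}$, while compact support of $\beta$ yields $\abs{x\cdot\beta(x)}\leq M'$ globally. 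Taking $\mathbb{P}^\beta$-expectation of the stopped identity at time $t\wedge\tau_n$ therefore produces
\begin{equation*}
\mathbb{E}^{\mathbb{P}^\beta}\big[\normx{X_{t\wedge\tau_n}}^2\big] \leq \mathbb{E}^{\mathbb{P}^\beta}\big[\normx{X_0}^2\big] + 2C\int_0^t \mathbb{E}^{\mathbb{P}^\beta}\big[\normx{X_{s\wedge\tau_n}}^2\big]\,ds + (2M + 2M' + d)\,t.
\end{equation*}
A Gronwall argument gives a uniform-in-$n$ bound, and Fatou's lemma as $n\to\infty$ delivers $\mathbb{E}^{\mathbb{P}^\beta}[\normx{X_t}^2]<\infty$ for every $t\geq0$.

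The main obstacle I anticipate is purely bookkeeping: ensuring the drift condition, which only holds outside a compact ball, is combined correctly with the behavior on the ball and with the $\beta$-perturbation, so that the pointwise inequality $-x\cdot(\nabla\psi(x)+\beta(x))\leq C\normx{x}^2 + (\text{const})$ is valid globally on $\mathbb{R}^d$. Once this inequality is in hand, the localization-plus-Gronwall scheme is routine, and the only other care needed is in justifying the interchange of limit and expectation at the end via Fatou.
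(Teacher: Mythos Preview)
Your proposal is correct and follows essentially the same route as the paper: local Lipschitz plus linear growth for existence/uniqueness, then It\^o's formula on $\norm{X_t}^2$, localization by exit times, a global pointwise bound on the drift term obtained by combining the drift condition outside the ball with a supremum over the compact ball (and the bounded, compactly supported $\beta$), Gronwall, and a limiting argument. The only cosmetic difference is that the paper invokes monotone convergence at the final step whereas you invoke Fatou; your choice is in fact the cleaner justification, since $\norm{X_{t\wedge\tau_n}}^2$ need not be monotone in $n$.
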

\begin{proof}
    Since the potential $\psi(\cdot)$ is of class $\mathcal{C}^\infty(\mathbb{R}^d;\mathbb{R}_+)$, its gradient $\nabla\psi(\cdot)$ is then of class $\mathcal{C}^\infty(\mathbb{R}^d;\mathbb{R}^d)$. Hence, both $(\nabla\psi+\beta)(\cdot)$ and $\nabla\psi(\cdot)$ are locally Lipschitz continuous on compact sets of $\mathbb{R}^d$, because $\beta(\cdot)$ is smooth with compact support. Therefore, together with the linear growth condition on $\nabla\psi(\cdot)$, the existence of a strong solution $(X_t)_{t\geq0}$ in $\mathcal{C}$ and its pathwise uniqueness is a consequence of Le Gall \cite[Theorem 8.3]{Le Gall}. We are left to verify that each $X_t$ admits a finite second moment. Indeed, in (\ref{drift condition}) we can choose $R>0$ sufficiently large so that $\text{supp}(\beta)\subseteq\{x\in\mathbb{R}^d:\,\norm{x}\leq R\}$. Then, denote
    \begin{equation*}
        C_R\coloneqq\sup\big\{d-2x\cdot (\nabla\psi+\beta I_{\{t>t_0\}})(x):\,\norm{x}\leq R,\,t\geq0\big\}\quad\text{and}\quad\tau_k\coloneqq\inf\big\{t\geq0:\,\norm{X_t}>k\big\},\;\;\forall~k>R.
    \end{equation*}
    Notice that the drift condition (\ref{drift condition}) guarantees $C_R<\infty$. Then the Itô formula gives
    \begin{equation*}
        d\norm{X_t}^2=\big(d-2X_t\cdot (\nabla\psi+\beta I_{\{t>t_0\}})(X_t)\big)\,dt+2X_t\,dW^\beta_t,\quad\text{for all}\quad t\geq0.
    \end{equation*}
    Here, both $X_t$ and $W^\beta_t$ are vectors in $\mathbb{R}^d$, so integrating $X$ against $W^\beta$ simply refers to $\sum_{i=1}^d\int X_t^{(i)}\,dW_t^{\beta,(i)}$. Taking $\mathbb{P}^\beta$-expectation under the localization sequence $(\tau_k)_{k>R}$, we observe that
    \begin{equation*}\begin{aligned}
        \mathbb{E}^{\mathbb{P}^\beta}\big[\norm{X_{\tau_k\wedge t}}^2\big]&=\mathbb{E}^{\mathbb{P}^\beta}\big[\norm{X_{0}}^2\big]+\mathbb{E}^{\mathbb{P}^\beta}\bigg[\int_0^{\tau_k\wedge t}\big(d-2X_\theta\cdot (\nabla\psi+\beta I_{\{\theta>t_0\}})(X_\theta)\big)I_{\{\norm{X_\theta}\leq R\}}\,d\theta\bigg]\\
        &\quad+\mathbb{E}^{\mathbb{P}^\beta}\bigg[\int_0^{\tau_k\wedge t}\big(d-2X_\theta\cdot (\nabla\psi+\beta I_{\{\theta>t_0\}})(X_\theta)\big)I_{\{\norm{X_\theta}> R\}}\,d\theta\bigg]\\
        &\leq\mathbb{E}^{\mathbb{P}^\beta}\big[\norm{X_{0}}^2\big]+C_R\mathbb{E}^{\mathbb{P}^\beta}\big[\tau_k\wedge t\big]+\mathbb{E}^{\mathbb{P}^\beta}\bigg[\int_0^{\tau_k\wedge t}d+2C\norm{X_\theta}^2\,d\theta\bigg].
    \end{aligned}\end{equation*}
    The last inequality above follows from (\ref{drift condition}) and the definition of $C_R$. Hence,
    \[
        \mathbb{E}^{\mathbb{P}^\beta}\big[\norm{X_{\tau_k\wedge t}}^2\big]\leq\mathbb{E}^{\mathbb{P}^\beta}\big[\norm{X_{0}}^2\big]+(C_R+d)t+2C\int_0^t \mathbb{E}^{\mathbb{P}^\beta}\big[\norm{X_{\tau_k\wedge\theta}}^2\big]\,d\theta\quad\text{for all}\quad t\geq0.
    \]
    Applying the Gronwall inequality \cite[Section 2]{Gronwall}, we obtain
    \begin{equation}\label{Gronwall inequality}
        \mathbb{E}^{\mathbb{P}^\beta}\big[\norm{X_{\tau_k\wedge t}}^2\big]\leq\mathbb{E}^{\mathbb{P}^\beta}\big[\norm{X_{0}}^2\big]+(C_R+d)t+2C\int_0^t e^{2C(t-\theta)}\big(\mathbb{E}^{\mathbb{P}^\beta}\big[\norm{X_{0}}^2\big]+(C_R+d)\theta\big)\,d\theta.
    \end{equation}
    The assumption $\mathbb{E}^{\mathbb{P}^\beta}[\norm{X_{0}}^2]<\infty$ implies that the RHS of (\ref{Gronwall inequality}) is finite for all $t\geq0$. Applying the monotone convergence theorem \cite[Theorem 1.26]{Rudin} and letting $k\nearrow\infty$,
    \begin{equation*}
        \mathbb{E}^{\mathbb{P}^\beta}[\norm{X_{t}}^2]\leq\mathbb{E}^{\mathbb{P}^\beta}\big[\norm{X_{0}}^2\big]+(C_R+d)t+2C\int_0^t e^{2C(t-\theta)}\big(\mathbb{E}^{\mathbb{P}^\beta}\big[\norm{X_{0}}^2\big]+(C_R+d)\theta\big)\,d\theta<\infty,
    \end{equation*}
    for all $t\geq0$, which verifies the claim.
\end{proof}
Lemma \ref{lem: strong solution, perturbed Itô-Langevin} tells us that the second moment condition propagates in time. Indeed, from now on we assume that $\mathbb{E}^{\mathbb{P}^\beta}[\norm{X_0}^2]<\infty$, which automatically implies
\begin{equation}\label{temp}
    \mathbb{E}^{\mathbb{P}^\beta}\big[\norm{X_t}^2\big]=\int_{\mathbb{R}^d}\norm{x}^2\,dP^\beta_t\in\mathbb{R},\quad\text{for all}\quad t\geq0.
\end{equation}
In Section \ref{sec: wasserstein},we will endow the Wasserstein space structure to the set of all Borel probability measures on $\mathbb{R}^d$ with finite second moments. There, (\ref{temp}) shows that $P^\beta_t$, $t\geq0$, belongs to this Wasserstein space, whose metric structure provides more insights, for instance Theorem \ref{w-space displacement}, on the evolution of $(P^\beta_t)_{t\geq0}$.
%---------------------------
\subsection{Density and reference measure}
The absolute continuity of each $P^\beta_t$ with respect to the Lebesgue measure on $\mathbb{R}^d$ is guaranteed in \cite[Section 2]{Jordan/Kinderlehrer/Otto}. For all $t\geq0$, we write $p^\beta_t(\cdot):\mathbb{R}^d\to\mathbb{R}_+$ as the density of $P^\beta_t$ against the Lebesgue measure on $\mathbb{R}^d$. Notice that $(t,x)\mapsto p^\beta_t(x)$ satisfies a partial differential equation on $\mathbb{R}_+\times\mathbb{R}^d$, called the \textit{Fokker-Planck equation}. This more analytic perspective will be discussed when we compute the semimartingale decomposition of $(dP^\beta_{T-t}/dQ)_{0\leq t\leq T}$ and $(\log dP^\beta_{T-t}/dQ)_{0\leq t\leq T}$ in Section \ref{sec: semimartingale}, where the time-reversal is performed in a compact interval $[0,T]$. For now, we only remark that the Fokker-Planck equation plays an important role in classical dissipative systems \cite{Gardiner, Risken} and has internal connections to the Itô-Langevin dynamics \cite{Schuss}.\par
The family of probability measures $(P^\beta_t)_{t\geq0}$ then has two interpretations: Either its density $(p^\beta_t(\cdot))_{t\geq0}$ can be seen as the solution to the Fokker-Planck equation, to be written out in Section \ref{sec: semimartingale}, or each $P^\beta_t$ can be seen as the marginal distribution to the solution process $(X_t)_{t\geq0}$ of the Itô-Langevin dynamics (\ref{perturbed Itô-Langevin dynamics}) at time  $t\geq0$. Apart from that, we introduce a $\sigma$-finite measure $Q$ on the Borel sets of $\mathbb{R}^d$ with density
\begin{equation*}
    q(\cdot)\coloneqq\exp\big(-2\psi(\cdot)\big):\mathbb{R}^d\to\mathbb{R}_+
\end{equation*}
against the Lebesgue measure on $\mathbb{R}^d$. This $\sigma$-finite Borel measure $Q$ is specified to be the reference measure when we compute the relative entropy of $(P^\beta_t)_{t\geq0}$ later in this article.\par
When the potential function $\psi(\cdot)$ grows rapidly enough so that $\exp(-\psi(\cdot))\in L^2(\mathbb{R}^d)$, the normalized density $q(\cdot)$ solves a variant Fokker-Plank equation, if we take $p^\beta_0(\cdot)=q(\cdot)$ modulo normalization. Looking back to (\ref{perturbed Itô-Langevin dynamics}), an equivalent probabilistic perspective reveals that $X_t\sim Q$ modulo normalization at each time $t\geq0$, see \cite{Gardiner, Risken}. Moreover, when $\exp(-\psi(\cdot))\in L^2(\mathbb{R}^d)$, it is verified \cite[Section 4]{Jordan/Kinderlehrer} that the normalized probability density $q(\cdot)$ satisfies a variational principle: It minimizes the \textit{free energy functional},
\begin{equation*}
    \mathscr{F}(\rho)\coloneqq\int_{\mathbb{R}^d}\psi(x)\rho(x)\,dx+\frac{1}{2}\int_{\mathbb{R}^d}\rho(x)\log\rho(x)\,dx,
\end{equation*}
over all probability densities $\rho(\cdot):\mathbb{R}^d\to\mathbb{R}_+$ on $\mathbb{R}^d$.\par
Additionally, we require the perturbation field to be of gradient type, i.e.~$\beta(\cdot)=\nabla B(\cdot):\mathbb{R}^d\to\mathbb{R}^d$ in our exposition. The function $B(\cdot):\mathbb{R}^d\to\mathbb{R}$, of class $\mathcal{C}^\infty(\mathbb{R}^d,\mathbb{R})$ and compactly supported, is called the perturbation potential. When the perturbation is switched off, the family of probability measures characterizing the coordinate process $(X_t)_{t\geq0}$ from (\ref{perturbed Itô-Langevin dynamics}) is denoted by $(P^0_t)_{t\geq0}$, where the zero-script simply indicates that this is the case of vanishing perturbation.
%-------------------------------
\subsection{Quantities from statistical physics} So far we have been working with the family of Borel probability measures $(P^\beta_t)_{t\geq0}$ and the $\sigma$-finite Borel reference measure $Q$ on $\mathbb{R}^d$. But how do we extract information from their time-evolution? One approach is to translate our language and work through the lens of stochastic processes.\par
It is now clear that each $P^\beta_t$ is absolutely continuous with respect to the $\sigma$-finite reference measure $Q$, for any $t\geq0$. Indeed, we write the \textit{likelihood ratio process}, or the Radon–Nikodým derivative, as 
\begin{equation}
    \label{likelihood ratio process}
    \ell^{\mathbb{P}^\beta}_t(X_t)=\frac{dP^\beta_t}{dQ},\quad\text{where}\quad\ell^{\mathbb{P}^\beta}_t(x)\coloneqq p^\beta_t(x)e^{2\psi(x)}\quad\text{for all}\quad(t,x)\in\mathbb{R}_+\times\mathbb{R}^d.
\end{equation}
And we call its logarithmic process as the \textit{relative entropy process},
\begin{equation}
    \label{relative entropy process}
    \mathcal{R}^{\mathbb{P}^\beta}_t(X_t)\coloneqq\log\ell^{\mathbb{P}^\beta}_t(X_t)=\log\frac{dP^\beta_t}{dQ}\;\;\text{for all}\;\;t\geq0.
\end{equation}
This seemly redundant definition will actually simplify computation in the analysis of semimartingale decomposition of $(\ell^{\mathbb{P}^\beta}_{T-t}(X_{T-t}))_{0\leq t\leq T}$ and $(\mathcal{R}^{\mathbb{P}^\beta}_{T-t}(X_{T-t}))_{0\leq t\leq T}$ in Section \ref{sec: semimartingale}. For notational conciseness, from now on we will abbreviate $\ell^{\mathbb{P}^\beta}$ and $\mathcal{R}^{\mathbb{P}^\beta}$ as $\ell^\beta$ and $\mathcal{R}^\beta$, respectively.\par
Having defined the basic setting of relevant stochastic processes, we now introduce some quantities of interest. These quantities have their origins from Statistical Physics \cite{Kardar2} and will, essentially, serve as the metrological index of our Itô-Langevin stochastic system. Regarding the family $(P^\beta_t)_{t\geq0}$ of Borel probability measures and the $\sigma$-finite reference measure $Q$, we define the \textit{relative entropy},
\begin{equation}
    \label{relative entropy}
    \mathbb{H}\big[P^\beta_t|Q\big]\coloneqq\int_{\mathbb{R}^d}\log\frac{dP^\beta_t}{dQ}\,dP^\beta_t=\int_{\mathbb{R}^d}p^\beta_t(x)\log\frac{p^\beta_t(x)}{q(x)}\,dx,\quad\text{for all}\quad t\geq0,
\end{equation}
as well as the \textit{Fisher information},
\begin{equation}
    \label{Fisher information}
    \mathbb{I}\big[P^\beta_t|Q\big]\coloneqq\int_{\mathbb{R}^d}\normy{\nabla\log\frac{dP^\beta_t}{dQ}}^2\,dP^\beta_t=\int_{\mathbb{R}^d}\normy{\nabla\big(\log p_t^\beta(x)+2\psi(x)\big)}^2p^\beta_t(x)\,dx,\quad\text{for all}\quad t\geq0.
\end{equation}\par
To avoid a meticulous discussion on the general case, some regularity is assumed for the time-evolution of the relative entropy. And therefore, our argument is simplified to better present the trajectorial formulation in Section \ref{sec: trajectorial dissipation}. We add the assumption that our choice of the initial distribution $P_0$ on $\mathbb{R}^d$ ensures $\mathbb{H}[P_0|Q]<\infty$. Incorporating $\ell^\beta$ and $\mathcal{R}^\beta$ into the above definitions, the relative entropy (\ref{relative entropy}) and Fisher information (\ref{Fisher information}) can then be written as
\begin{equation*}
    \mathbb{H}\big[P^\beta_t|Q\big]=\mathbb{E}^{\mathbb{P}^\beta}\big[\mathcal{R}^\beta_t(X_t)\big]\quad\text{and}\quad\mathbb{I}\big[P^\beta_t|Q\big]=\mathbb{E}^{\mathbb{P}^\beta}\big[\normy{\nabla\mathcal{R}^\beta_t(X_t)}^2\big],\quad\text{for all}\quad t\geq0.
\end{equation*}\par
One remarkable consequence of defining the relative entropy of probability measure $P^\beta_t$ with respect to the $\sigma$-finite Borel measure $Q$ is that the mapping $t\mapsto\mathbb{H}[P^0_t|Q]$ admits a strong version of monotonicity on $\mathbb{R}_+$, in the absence of perturbation.
\begin{lemma}\label{relative entropy H is Q-submartingale}
    For finite time horizon $T\geq0$ and let $\tau_1,\tau_2$ with $\tau_1\leq\tau_2$ be two stopping times taking value in $[0,T]$ with respect to the filtration generated by the coordinate process $(X_t)_{t\geq0}$, then,
    \begin{equation*}
        \mathbb{H}\big[P^0_{T-\tau_1}|Q\big]\leq\mathbb{H}\big[P^0_{T-\tau_2}|Q\big]\quad\text{and}\quad\mathbb{H}\big[P^0_{T-t_1}|Q\big]\leq\mathbb{H}\big[P^0_{T-t_2}|Q\big],\quad\text{for all}\quad0\leq t_1\leq t_2\leq T.
    \end{equation*}
\end{lemma}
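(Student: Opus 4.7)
My plan starts with the observation that for each fixed $s\in[0,T]$, the quantity $\mathbb{H}[P^0_s|Q]=\int_{\mathbb{R}^d}p^0_s(x)\log(p^0_s(x)/q(x))\,dx$ is a deterministic real number, depending only on the marginal law $P^0_s$ and not on the underlying paths. Consequently, the two asserted inequalities collapse to the single deterministic statement that $s\mapsto\mathbb{H}[P^0_s|Q]$ is non-increasing on $[0,T]$: once this is established, the second claim is the statement itself, while for the first one, fix $\omega\in\Omega$ with $\tau_1(\omega)\leq\tau_2(\omega)$, note $T-\tau_1(\omega)\geq T-\tau_2(\omega)$, and apply the deterministic monotonicity pointwise.

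To prove the monotone decrease in forward time, I would differentiate the map $t\mapsto\mathbb{H}[P^0_t|Q]=\int_{\mathbb{R}^d}p^0_t(\log p^0_t+2\psi)\,dx$ using the unperturbed Fokker--Planck equation
\begin{equation*}
\partial_t p^0_t(x)=\nabla\cdot\bigl(p^0_t(x)\nabla\psi(x)\bigr)+\tfrac{1}{2}\,\Delta p^0_t(x),
\end{equation*}
which is the $\beta\equiv 0$ specialization of the PDE to be recorded formally in Section \ref{sec: semimartingale}. Mass conservation kills the $\int_{\mathbb{R}^d}\partial_t p^0_t\,dx$ term, leaving $\tfrac{d}{dt}\mathbb{H}[P^0_t|Q]=\int_{\mathbb{R}^d}\partial_t p^0_t\cdot\log(p^0_t/q)\,dx$. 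A double integration by parts, using $\nabla\log q=-2\nabla\psi$ and exploiting the cancellation between the drift and diffusion contributions, should yield the de Bruijn--type identity
\begin{equation*}
\tfrac{d}{dt}\mathbb{H}[P^0_t|Q]=-\tfrac{1}{2}\int_{\mathbb{R}^d}p^0_t\,\normy{\nabla\log(p^0_t/q)}^2\,dx=-\tfrac{1}{2}\,\mathbb{I}[P^0_t|Q]\leq 0,
\end{equation*}
which is exactly the monotonicity required.

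The main technical obstacle is justifying the two integrations by parts, namely, verifying that the boundary contributions at infinity vanish. This requires sufficient pointwise decay of $p^0_t$ and its gradient, which I would extract from the smoothness and linear growth of $\nabla\psi$, the drift condition \eqref{drift condition}, the standing hypothesis $\mathbb{H}[P_0|Q]<\infty$, and the second-moment propagation established in Lemma \ref{lem: strong solution, perturbed Itô-Langevin}; a truncation on large Euclidean balls together with a mollification argument should make the passage to the limit rigorous. Alternatively, and more in line with the trajectorial spirit advertised in the introduction, one could postpone this computation and recover the inequality as a corollary of the semimartingale decomposition of $(\mathcal{R}^0_{T-t}(X_{T-t}))_{0\leq t\leq T}$ to be derived in Section \ref{sec: semimartingale}: once that decomposition exhibits a non-negative finite-variation part, the submartingale property with respect to the backward filtration combined with optional stopping would deliver both claimed inequalities at once.
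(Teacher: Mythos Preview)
Your reduction is correct and worth stating explicitly: since $\mathbb{H}[P^0_s|Q]$ is a deterministic function of the time parameter $s$, both inequalities in the lemma reduce (pointwise in $\omega$, for the stopping-time version) to the monotone decrease of $s\mapsto\mathbb{H}[P^0_s|Q]$ on $[0,T]$. The paper does not prove this lemma in the body; it simply refers the reader to \cite[Corollary~1.25]{Tschiderer} and remarks that the result is a ``surface corollary'' of the fact that the backward-time process $(\mathcal{R}^0_{T-t}(X_{T-t}))_{0\leq t\leq T}$ is a $Q$-submartingale in the sense appropriate to a merely $\sigma$-finite reference measure. So your main proposal---a direct differentiation of $t\mapsto\mathbb{H}[P^0_t|Q]$ via the Fokker--Planck equation and integration by parts to obtain $\tfrac{d}{dt}\mathbb{H}[P^0_t|Q]=-\tfrac{1}{2}\,\mathbb{I}[P^0_t|Q]\leq 0$---is a genuinely different, more elementary route than the one the paper points to. What you gain is independence from the $Q$-submartingale machinery; what you lose is the process-level statement (which the paper will tacitly reuse, in a variant form, inside the proof of Lemma~\ref{lem: M is L2 martingale}).

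Two cautions. First, within the logical architecture of the paper you must \emph{not} invoke Corollary~\ref{cor: classical result, unperturbed} for the de Bruijn identity: its derivation passes through Lemma~\ref{lem: M is L2 martingale}, whose proof in turn appeals to a variant of the very lemma you are proving, so that route is circular. Your plan to establish the identity directly from the PDE is therefore the right one; just be explicit that this is a self-contained computation. Second, your alternative---wait for the semimartingale decomposition of Section~\ref{sec: semimartingale} and apply optional stopping---runs into the same circularity: Lemma~\ref{lem: sem. dec. R} only gives a \emph{local} martingale part, and upgrading it to a true martingale (Lemma~\ref{lem: M is L2 martingale}) already consumes the present lemma. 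So if you want a clean non-circular argument, stick with the direct PDE computation and carry out the truncation/mollification you outlined to justify the vanishing boundary terms.
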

Lemma \ref{relative entropy H is Q-submartingale} is actually a surface corollary of an internal property of the relative entropy process: $(\mathcal{R}^\beta_t)_{t\geq0}$ running at backward-time satisfies the condition to be a $Q$-submartingale, where the reference measure $Q$ is only required to be $\sigma$-finite on $\mathbb{R}^d$, not necessarily a probability measure. For the precise definition of a $Q$-submartingale and for the proof of Lemma \ref{relative entropy H is Q-submartingale}, readers are referred to \cite[Corollary 1.25]{Tschiderer}.\par
Remember that $(P^0_t)_{t\geq0}$ is the family of marginal distributions induced by (\ref{perturbed Itô-Langevin dynamics}), without perturbation. In fact, $P^\beta_t$ coincides with $P^0_t$ when $0\leq t\leq t_0$, before the perturbation $\beta(\cdot)$ is initiated. Similarly, we denote by $\mathbb{H}[P^0_t|Q]$, $\mathbb{I}[P^0_t|Q]$, and $\ell^0_t(X_t)$, $\mathcal{R}^0_t$ for all $t\geq0$, in their respective zero-perturbation case. The above-defined relative entropy and Fisher information provide decisive metric to a quantitative version of the motivation and rough descriptions mentioned in Section \ref{sec: intro}. Such trajectorial formulation will be thoroughly investigated in Section \ref{sec: trajectorial dissipation}.
%----------------------------------------
\subsection{Preview of classical results} The trajectorial approach which is presented in Section \ref{sec: trajectorial dissipation} of this expository article reveals more internal information from the Itô-Langevin stochastic dynamics, but its formulation is rather abstract and difficult to comprehend at first reading. Therefore, it will be a courtesy to present the known classical results on the relative entropy dissipation to the readers as a flashing lamp, before we follow a long journey comprising of the time-reversal principles in Section \ref{sec: time-reversal} and the semimartingale decomposition of $\ell^\beta$ and $\mathcal{R}^\beta$ in Section \ref{sec: semimartingale}, which eventually leads to the trajectorial approach in Section \ref{sec: trajectorial dissipation}.\par
The classical result on relative entropy dissipation is phrased as the time-derivative of the relative entropy (\ref{relative entropy}), in the absence of perturbation,
\begin{equation*}
    \lim\limits_{t\searrow t_0}\frac{1}{t-t_0}\bigg(\mathbb{H}[P^0_t|Q]-\mathbb{H}[P^0_{t_0}|Q]\bigg)=-\frac{1}{2}\mathbb{I}[P^0_{t_0}|Q]\;\;\text{for all}\;\;t_0\geq0,
\end{equation*}
which renders us the well-known \textit{de Bruijin identity} \cite[Equation 2.12]{Stam}. An observation of the above limiting identity tells us that the time-derivative of relative entropy is therefore expressed as the Fisher information modulo a multiplicative constant. Another quantity of interest is to estimate the limiting behavior of the metric from the Borel probability measure $P^\beta_t$ to $P^\beta_{t_0}$ on $\mathbb{R}^d$, as $t\searrow t_0$.\par
To give a more precise description to this metric, we define $\mathscr{P}_2(\mathbb{R}^d)$ to be the \textit{quadratic Wasserstein space}, whose elements consist of all probability measures on $\mathbb{R}^d$ admitting a finite second moment. And the space $\mathscr{P}_2(\mathbb{R}^d)$ is equipped with the suitably defined \textit{quadratic Wasserstein metric} $W_2(\mu,\nu)$ for all $\mu,\nu\in\mathscr{P}_2(\mathbb{R}^d)$. For now we just view $W_2$ as a well-defined metric on $\mathscr{P}_2(\mathbb{R}^d)$. Its exact definition as well as the detailed discussion of the quadratic Wasserstein space will be deferred to Section \ref{sec: wasserstein}. We are interested in the limiting behavior of $W_2(P^\beta_t,P^\beta_{t_0})$ as $t\searrow t_0$. In Section \ref{sec: wasserstein}, it will be shown that
\begin{equation*}
        \lim\limits_{t\searrow t_0}\frac{1}{t-t_0}W_2(P^0_t,P^0_{t_0})=\frac{1}{2}\sqrt{\mathbb{I}[P^0_{t_0}|Q]}.
\end{equation*}\par
Clearly, the limiting time-derivative of the relative entropy dissipation and that of the quadratic Wasserstein metric are strongly correlated in the sense that
\begin{equation*}
    \lim\limits_{t\searrow t_0}\frac{\,\mathbb{H}[P^0_t|Q]-\mathbb{H}[P^0_{t_0}|Q]\,}{W_2(P^0_t,P^0_{t_0})}=-\sqrt{\mathbb{I}[P^0_{t_0}|Q]},
\end{equation*}
which reveals also the fact that Fisher information (\ref{Fisher information}) serves as a bridge-gate between the relative entropy and the quadratic Wasserstein metric. The idea to consider the relative entropy dissipation in the context of quadratic Wasserstein space was first discussed by Jordan/Kinderlehrer/Otto \cite{Jordan/Kinderlehrer/Otto} and Otto \cite{Otto}.\par
This expository article takes into consideration an external deterministic perturbation, i.e.~there is no extra randomness governing the perturbation field, to the unperturbed Itô-Langevin dynamics. This is a natural extension to the known results, but its major importance is the revelation of the so called \textit{steepest descent property} of the relative entropy dissipation. The steepest descent property can only be precisely described after we have presented our analysis of the time-displacement of $(P^\beta_t)_{t\geq0}$ viewed from the quadratic Wasserstein space perspective in Section \ref{sec: wasserstein}. And this steepest descent property answers the question why the unperturbed dynamics is remarkably different from the the same stochastic systems placed under the smooth perturbation field $\beta(\cdot)$.\par
To achieve the trajectorial formulation of the relative entropy dissipation, it is more convenient to look at things backward in time. The following Section \ref{sec: time-reversal} presents no new results, but it contains all the necessary background theory of stochastic processes under time-reversal. 
    
%-----------------------------
%-----------------------------

\section{Time-reversal of diffusion processes}\label{sec: time-reversal}
As announced in the end of Section \ref{sec: stochastic dynamics}, this preparatory section contains ramifications on the theory of time-reversal principles of diffusions. We choose to present this general topic before discussing the semimartingale decomposition of relevant processes $\ell^\beta$ and $\mathcal{R}^\beta$ in Section \ref{sec: semimartingale} as well as the trajectorial formulation in Section \ref{sec: trajectorial dissipation}, because many time-reversal techniques are adopted to formalize the main results, which are conveniently written in a backward-time fashion. For a pedagogical reasoning, a courtesy on various filtrations, Wiener processes, and Itô integration under a backward-time approach becomes quite necessary.
%---------------------------------
\subsection{Historical comments}
 The principle of time-reversal in stochastic analysis has a distinguished history in many disciplines of sciences. This type of question has been of interest to physicists, most notably Guerra/Marra \cite{Guerra/Marra}, Nelson \cite{Nelson}, and Witten \cite{Witten, Witten2} as well as to control theorists Lindquist/Picci \cite{Lindquist/Picci}, Goussev/Jalabert/Pastawski/Wis-niacki \cite{Goussev/Jalabert/Pastawski/Wisniacki}. The philosophy of time-reversal principles has also shed light to economists, see Zumbach \cite{Zumbach}. Previous to our work, the connection between time-reversal dynamics and the Itô-Langevin stochastic differential equations has also been discussed in \cite{Anderson, Chen/Margarint,Chen/Margarint2, Pardoux}.\par
 It is well-known that Markov process remains a Markov process under time-reversal \cite[Section 1]{Haussmann/Pardoux}. However, the strong Markovian property is not necessarily preserved under time-reversal \cite[Chapter V.7]{Rogers/Williams}, and neither is the semimartingale property \cite[Section 1]{Walsh}. So it is of interest to see whether the diffusion property, i.e.~strong Markovian semimartingale property, is preserved under time-reversal.\par
%---------------------------------
Instead of analyzing the Itô-Langevin dynamics (\ref{perturbed Itô-Langevin dynamics}), we start with a general $\mathbb{R}^d$-valued diffusion process, i.e.~strong Markovian continuous semimartingale, $(S_t)_{t\geq0}$ driven by a stochastic differential equation, see for instance (\ref{forward diffusion equation}), with smooth drift and dispersion coefficients. Our main goal in this section is to assertion that its time-reversed process,
\begin{equation}
    \label{time-reversed process}
    \widehat{S}_t\coloneqq S_{T-t}\;\;\text{for all}\;\;0\leq t\leq T,
\end{equation}
is a diffusion, adapted to a backward filtration which will be specified later, provided sufficient regularity on its constraint stochastic differential equation, for instance (\ref{forward diffusion equation}). Such question goes back to Boltzmann \cite{Boltzmann2, Boltzmann3, Boltzmann4}, Schrödinger \cite{Schrödinger1, Schrödinger2}, and Kolmogorov \cite{Kolmogorov}. Time-reversal of stochastic processes was dealt with systematically by Nelson \cite{Nelson}, Carlen \cite{Carlen} in the context of dynamical theory for diffusions. It was developed in the context of filtering, interpolation and extrapolation by Haussmann/Pardoux \cite{Haussmann/Pardoux} and Pardoux \cite{Pardoux}. In a non-Markovian context, the time-reversal of diffusions was developed by Föllmer \cite{Föllmer1, Föllmer2}. See also Margarint \cite{Margarint} and Napolitano/Sakurai \cite{Napolitano/Sakurai} for the time-reversal principles applied to Mathematical Physics.\par
In this expository article, we focus on the time-reversal principles relevant to the Itô-Langevin stochastic differential equation (\ref{perturbed Itô-Langevin dynamics}) and demonstrate that the time-reversal of its solution process maintains the diffusion property, provided sufficient regularity conditions on its drift and dispersion terms, under a suitable filtered probability space. Henceforth, in Sections \ref{sec: semimartingale} and \ref{sec: trajectorial dissipation} where we formalize the trajectorial interpretation of the relative entropy dissipation, it becomes safe to wielding the time-reversal techniques. Moreover, it is convenient to restrict our discussion to a compact time horizon $T>0$ without loss of generality.
%----------------------------------
\subsection{Backward filtrations}
Under time-reversal, the backward processes are no longer adapted to the original forward-time filtrations. Consequently, it is necessary to construct some new filtrations, from the known information, which expand backward in time. For a reference on the theory of filtrations, readers are referred to Protter \cite{Protter}. We place a filtered probability space $(\Omega,\mathcal{F},\mathbb{F},\mathbb{P})$ with the forward-time filtration $\mathbb{F}\coloneqq(\mathcal{F}_t)_{0\leq t\leq T}$, where
\begin{equation*}
    \mathcal{F}_t\coloneqq\sigma(\xi,W_\theta:\,0\leq\theta\leq t)\quad\text{for all}\quad0\leq t\leq T,
\end{equation*}
modulo $\mathbb{P}$-augmentation. Here $\xi$ is an $\mathcal{F}_0$-measurable and $(W_t)_{0\leq t\leq T}$ is an $\mathbb{F}$-Brownian motion starting from zero, independent of $\xi$. Next, consider the stochastic differential equations
\begin{equation}
    \label{forward diffusion equation}
    S^{(i)}_t=\xi^{(i)}+\int_0^ta_i(\theta,S_\theta)\,d\theta+\sum\limits_{\nu=1}^m\int_0^tb_{i\nu}(\theta,S_\theta)\,dW^{(\nu)}_\theta\quad\text{for all}\quad0\leq t\leq T,
\end{equation}
with $i=1,2,\ldots,d$. We assume that (\ref{forward diffusion equation}) admits a pathwise unique strong solution, which conforms to our Itô-Langevin setting of the coordinate process $(X_t)_{0\leq t\leq T}$, where its strong existence and the pathwise uniqueness property is verified in Lemma \ref{lem: strong solution, perturbed Itô-Langevin}. Then, $S=(S^{(i)},\ldots,S^{(d)})^T$ is $\mathbb{F}$-adapted and
\begin{equation*}
    \mathcal{F}_t=\sigma(S_\theta,W_\theta:\,0\leq\theta\leq t)=\sigma(S_0,W_t-W_\theta:\,0\leq\theta\leq t)\quad\text{for all}\quad0\leq t\leq T,
\end{equation*}
modulo $\mathbb{P}$-augmentation. It follows that $(S_t)_{0\leq t\leq T}$ has the $(\mathcal{F}_t)_{0\leq t\leq T}$-strong Markovian property, see \cite[Section 5.2]{Karatzas/Shreve}.\par
We further assume that the drifts $a_i(t,x)$ and dispersions $b_{i\nu}(t,x)$ are of class $\mathcal{C}^\infty(\mathbb{R}_+\times\mathbb{R}^d;\mathbb{R})$, for all $1\leq i\leq d$ and $1\leq\nu\leq m$. Hence their regularity contains enough smoothness. And the covariance matrix $\sigma(t,x)$ of (\ref{forward diffusion equation}), given by
\begin{equation*}
    \sigma_{ij}(t,x)\coloneqq\sum\limits_{\nu=1}^mb_{i\nu}(t,x)b_{j\nu}(t,x),\quad\text{for all}\quad1\leq i,j\leq d,
\end{equation*}
is of class $\mathcal{C}^\infty(\mathbb{R}_+\times\mathbb{R}^d;\mathbb{R}^{d\times d})$. The density function $\rho_t(\cdot):\mathbb{R}^d\to\mathbb{R}_+$ of the marginal law of $S_t$ against the Lebesgue measure on $\mathbb{R}^d$ solves the forward Kolmogorov equation \cite[Chapter 3]{Erban/Chapman}, \cite[Section 5.7]{Karatzas/Shreve},
\begin{equation*}
    \frac{\partial \rho_t}{\partial t}(x)=\frac{1}{2}\sum\limits_{1\leq i,j\leq d}\frac{\partial^2}{\partial x_i\partial x_j}\big(\sigma_{ij}(t,x)\rho_t(x)\big)-\sum\limits_{1\leq i\leq d}\frac{\partial}{\partial x_i}\big(a_i(t,x)\rho_t(x)\big),\quad\text{for all}\quad(t,x)\in[0,T]\times\mathbb{R}^d.
\end{equation*}
Define the following filtration $\widehat{\mathbb{F}}\coloneqq(\widehat{\mathcal{F}}_{T-t})_{0\leq t\leq T}$ running backward in time, by
\begin{equation}
    \label{backward filtration F}
    \widehat{\mathcal{F}}_{T-t}\coloneqq\sigma(S_{T-\theta},W_{T-\theta}-W_{T-t}:\,0\leq\theta\leq t)\quad\text{for all}\quad0\leq t\leq T.
\end{equation}
For each $0\leq t\leq T$, This $\sigma$-algebra $\widehat{\mathcal{F}}_{T-t}$ can be equivalently expressed as
\begin{equation*}\begin{aligned}
    \widehat{\mathcal{F}}_{T-t}&=\sigma(S_{T-t},W_{T-\theta}-W_{T-t}:\,0\leq\theta\leq t)= \sigma(S_{T-t},W_{T}-W_{T-\theta}:\,0\leq\theta\leq t)\\
    &=\sigma(S_{T},W_{T-t}-W_{T-\theta}:\,0\leq\theta\leq t)=\sigma(S_T)\vee\mathcal{H}_{T-t},
\end{aligned}\end{equation*}
where $\mathcal{H}_{T-t}\coloneqq\sigma(W_{T-t}-W_{T-\theta}:\,0\leq\theta\leq t)$ is independent of the random vector $S_{T-t}$, for all $0\leq t\leq T$, see \cite[Section 4]{Liang/Lyons/Qian}. Then the backward-time process $(\widehat{S}_t)_{0\leq t\leq T}$ and $\widetilde{W}_t\coloneqq W_{T-t}-W_T$, $0\leq t\leq T$ are both adapted to $\widehat{\mathbb{F}}$ defined in (\ref{backward filtration F}). Indeed, the $\sigma$-algebra $\widehat{\mathcal{F}}_{T-t}$ can be further expressed as
\begin{equation*}
    \widehat{\mathcal{F}}_{T-t}=\sigma(\widehat{S}_\theta,\widetilde{W}_\theta-\widetilde{W}_t:\,0\leq\theta\leq t)=\sigma(\widehat{S}_0)\vee\mathcal{H}_{T-t},\quad
    \text{where}\;\;\mathcal{H}_{T-t}=\sigma(\widetilde{W}_\theta-\widetilde{W}_t:\,0\leq\theta\leq t).
\end{equation*}\par
The notion of a backward filtration is essential, once we put the time-evolution under a reversed direction. To have a meaningful discussion on the relevant backward-time stochastic processes, it is necessary to specify the backward-time filtrations to which these processes are filtered. In consequence, the above argument provides necessary supplements to guarantee this point, when we write down the semimartingale decomposition of $\ell^\beta$ as well as $\mathcal{R}^\beta$ backward in time in Section \ref{sec: semimartingale}, and when we formulate the trajectorial approach of the relative entropy dissipation written in a backward-time fashion in Section \ref{sec: trajectorial dissipation}.
%---------------------------------
\subsection{Wiener process and Itô integration}
Running filtrations under time-reversal induces a new question. How do we identify an adapted backward-time Wiener process? Indeed, it is possible that the time-reversal of a forward-time Brownian motion loses its martingale property under a backward filtration. Nonetheless, if we subtract a proper backward-time finite variation process, the Lévy theorem \cite[Theorem 5.12]{Le Gall} yields the adapted Brownian motion under time-reversal.
\begin{lemma}\label{backward Brownian motion}
    The backward-time process $(\widetilde{W}_t)_{0\leq t\leq T}$ is a Brownian motion of its own filtration $(\mathcal{H}_{T-t})_{0\leq t\leq T}$, but only a semimartingale to the strictly larger filtration $\widehat{\mathbb{F}}$. On the other hand, if we define the backward-time process $(B_t)_{0\leq t\leq T}=((B_t^{(1)}\ldots,B_t^{(m)})^T)_{0\leq t\leq T}$ by
    \begin{equation}\label{definition of B}
        B^{(\nu)}_t\coloneqq\widetilde{W}^{(\nu)}_t-\int_0^t\rho_{T-\theta}^{-1}\sum\limits_{1\leq i\leq d}\frac{\partial}{\partial x_i}\big(\rho_{T-\theta}(\cdot)b_{i\nu}(T-\theta,\cdot)\big)(\widehat{S}_\theta)\,d\theta,\quad\text{for all}\quad0\leq t\leq T,
    \end{equation}
    with $\nu=1,2,\ldots,m$. Then $(B_t)_{0\leq t\leq T}$ is an $\mathbb{R}^m$-valued $\widehat{\mathbb{F}}$-adapted Brownian motion independent of $\widehat{\mathcal{F}}_T$, and therefore also independent of $S_T$.
\end{lemma}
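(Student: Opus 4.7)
The plan is to treat the two assertions of the lemma separately, since they rely on different ingredients.

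For the first claim, I would verify that $\widetilde{W}_t = W_{T-t} - W_T = -(W_T - W_{T-t})$ inherits the Brownian motion property from $(W_t)_{0 \leq t \leq T}$ with respect to its own filtration $(\mathcal{H}_{T-t})_{0 \leq t \leq T}$. This rests on the classical observation that a Gaussian process with independent increments is invariant under time-reversal up to a sign: continuity of paths, $\widetilde{W}_0 = 0$, Gaussianity of increments with the correct covariance, and independence across disjoint intervals can all be read off from the corresponding properties of $W$. For the assertion that $\widetilde{W}$ is only a semimartingale, and not a martingale, with respect to the strictly larger filtration $\widehat{\mathbb{F}} = \sigma(S_T) \vee \mathcal{H}_{T-t}$, I would point out that $\widehat{\mathcal{F}}_T = \sigma(S_T)$ already encodes information about the full trajectory of $W$ on $[0,T]$ through the SDE (\ref{forward diffusion equation}), so future increments of $\widetilde{W}$ cease to be centered conditional on $\widehat{\mathcal{F}}_{T-t}$. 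The semimartingale property is nevertheless preserved, because the conditional drift turns out to be absolutely continuous in $t$, as the second claim of the lemma makes explicit.

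For the second claim, I would invoke the Haussmann--Pardoux time-reversal theorem and verify the three hypotheses of Lévy's characterization for $(B_t)_{0 \leq t \leq T}$. Adaptedness and continuity are immediate from (\ref{definition of B}), since $\widehat{S}_\theta$ is $\widehat{\mathcal{F}}_{T-t}$-measurable for $\theta \leq t$ and $\widetilde{W}$ is adapted to the same filtration. The cross-variation is $[B^{(\nu)}, B^{(\mu)}]_t = [\widetilde{W}^{(\nu)}, \widetilde{W}^{(\mu)}]_t = \delta_{\nu\mu}\, t$, because the correction term is absolutely continuous and hence contributes nothing to the quadratic variation. The decisive step is the $\widehat{\mathbb{F}}$-martingale property of $B$, which amounts to showing that the integrand in (\ref{definition of B}) coincides with the finite-variation part in the Doob--Meyer decomposition of $\widetilde{W}$ with respect to $\widehat{\mathbb{F}}$. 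I would carry this out by testing against smooth functions $f(\widehat{S}_t)$ and performing an integration by parts, using that the density $\rho_t(\cdot)$ of the forward marginal $S_t$ solves the Kolmogorov forward equation and that $\rho_t$ is strictly positive and smooth under the assumed regularity of $a_i$ and $b_{i\nu}$.

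The independence of $B$ from $\widehat{\mathcal{F}}_T = \sigma(S_T)$ should then follow because, by construction, $B$ is the innovation process relative to the initial enlargement $\sigma(S_T) \vee \mathcal{H}_{T-t}$: computing the $\widehat{\mathbb{F}}$-conditional characteristic functional $\mathbb{E}[\exp(i \langle u, B_t - B_s \rangle) \mid \widehat{\mathcal{F}}_{T-s}]$ and verifying that it reduces to $\exp(-\tfrac{1}{2}(t-s)\|u\|^2)$ without any residual dependence on $S_T$, a standard monotone class argument then extends this to independence of the whole process $B$ from $\sigma(S_T)$. The main obstacle lies in the martingale verification of the previous paragraph: it hinges on a delicate integration-by-parts identity that rewrites the forward Fokker--Planck dynamics of $\rho_t$ as a backward generator acting on test functions of $\widehat{S}_t$, and one must show that all boundary terms vanish, which is where the smoothness of the coefficients, together with suitable decay of $\rho_t$ at infinity, come into play.
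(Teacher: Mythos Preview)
Your overall architecture matches the paper's: both reduce the second claim to L\'evy's characterisation, so continuity, adaptedness, and the quadratic-variation identity $\langle B^{(\mu)},B^{(\nu)}\rangle_t=t\delta_{\mu\nu}$ are handled identically. The divergence is in how the martingale property of $B$ with respect to $\widehat{\mathbb{F}}$ is established.

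The paper does \emph{not} test merely against smooth functions $f(\widehat{S}_t)$. It starts from an arbitrary bounded $\widehat{\mathcal{F}}_{T-t}$-measurable $\mathcal{K}$, and the first substantive step is the reduction $\mathbb{E}^{\mathbb{P}}[\mathcal{K}\mid\mathcal{F}_t]=K_t(S_t)$ via the forward Markov property; without this, testing against $f(\widehat{S}_t)$ alone would not establish the martingale property relative to the full backward filtration $\widehat{\mathbb{F}}=\sigma(S_T)\vee\mathcal{H}_{T-\cdot}$, which is strictly larger than the natural filtration of $\widehat{S}$. Your outline skips this reduction, and that is a genuine gap.

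The second difference is the mechanism of the computation. You propose an integration by parts that leans on the Kolmogorov forward equation for $\rho_t$. The paper instead observes that $\theta\mapsto K_\theta(S_\theta)=\mathbb{E}^{\mathbb{P}}[\mathcal{K}\mid\mathcal{F}_\theta]$ is a \emph{forward} $\mathbb{F}$-martingale, writes its stochastic-integral representation $K_t(S_t)-K_\theta(S_\theta)=\sum_{i,\nu}\int_\theta^t \partial_{x_i}K_\tau(S_\tau)\,b_{i\nu}(\tau,S_\tau)\,dW^{(\nu)}_\tau$, and then uses the It\^o isometry to compute $\mathbb{E}[(W^{(\nu)}_t-W^{(\nu)}_\theta)K_t(S_t)]$ directly; only \emph{after} this step does a spatial integration by parts against $\rho_\tau$ appear, converting $\partial_{x_i}K_\tau\cdot b_{i\nu}\rho_\tau$ into $-K_\tau\,\partial_{x_i}(\rho_\tau b_{i\nu})$ and recovering exactly the drift in~(\ref{definition of B}). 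The forward equation for $\rho$ is never invoked. This route (due to Meyer, and underlying Haussmann--Pardoux) is sharper than what you sketch, because it sidesteps any need to control time-derivatives of $\rho$ or boundary terms coming from a PDE argument; the only regularity used is that $b_{i\nu}$ and $\rho_\tau$ are smooth enough for the spatial integration by parts, and your concern about decay of $\rho_t$ at infinity is exactly the residual hypothesis.

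Finally, the paper does not carry out your proposed characteristic-functional computation for independence of $B$ from $S_T$; it simply notes that an $\widehat{\mathbb{F}}$-Brownian motion starting at zero is automatically independent of $\widehat{\mathcal{F}}_T=\sigma(S_T)$, which is the standard fact that a Brownian motion in an enlarged filtration is independent of the initial $\sigma$-field.
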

\begin{proof}
    First, we need to show that each component $B^{(\nu)}$, $\nu=1,\ldots,m$ of the backward-time process $B$ is a $\widehat{\mathbb{F}}$-adapted martingale. In other words, for all bounded $\widehat{\mathcal{F}}_t$-measurable $\mathcal{K}$, we have to show that
    \begin{equation}\label{how to show B is martingale}
        \mathbb{E}^{\mathbb{P}}\big[\big(B^{(\nu)}_{T-\theta}-B^{(\nu)}_{T-t}\big)\mathcal{K}\big]=0,\quad\text{for all}\quad0\leq\theta\leq t\leq T.
    \end{equation}
    Since $\mathbb{E}^{\mathbb{P}}[\mathcal{K}|\mathcal{F}_t]=\mathbb{E}^{\mathbb{P}}[\mathcal{K}|S_t]$ $\mathbb{P}$-a.s. there exists a Borel measurable $K_t:\mathbb{R}^m\to\mathbb{R}$ such that $K_t(S_t)=\mathbb{E}^{\mathbb{P}}[\mathcal{K}|\mathcal{F}_t]$. We further define $K_\theta(x)\coloneqq\mathbb{E}^{\mathbb{P}}[K_t(S_t)|S_\theta=x]$ for all $(\theta,x)\in[0,t]\times\mathbb{R}^m$. Invoking the Markovian property of $(S_t)_{0\leq t\leq T}$ and following the ideas from Meyer \cite{Meyer}, we deduce that the process,
    \begin{equation*}
        K_\theta(S_\theta)=\mathbb{E}^{\mathbb{P}}\big[K_t(S_t)|S_\theta\big]=\mathbb{E}^{\mathbb{P}}\big[\mathcal{K}|\mathcal{F}_\theta\big],\quad\text{for all}\quad0\leq\theta\leq t,
    \end{equation*}
    is an $\mathbb{F}$-martingale, and therefore,
    \begin{equation*}
        K_t(S_t)-K_\theta(S_\theta)=\sum\limits_{1\leq i\leq d}\sum\limits_{1\leq\nu\leq m}\int_\theta^t\frac{\partial K_\tau}{\partial x_i}(S_\tau)b_{i\nu}(\tau,S_\tau)\,dW^{(\nu)}_\tau.
    \end{equation*}
    Since $\mathbb{E}^{\mathbb{P}}[(W^{(\nu)}_t-W^{(\nu)}_\theta)K_t(S_t)]=\mathbb{E}^{\mathbb{P}}[(W^{(\nu)}_t-W^{(\nu)}_\theta)(K_t(S_t)-K_\theta(S_\theta))]$,
    \begin{equation*}
        \mathbb{E}^{\mathbb{P}}\big[\big(W^{(\nu)}_t-W^{(\nu)}_\theta\big)K_t(S_t)\big]=\mathbb{E}^{\mathbb{P}}\big[\sum\limits_{i=1}^d\int_\theta^t\frac{\partial K_\tau}{\partial x_i}(S_\tau)b_{i\nu}(\tau,S_\tau)\,d\tau\big]=\sum\limits_{i=1}^d\int_\theta^t\int_{\mathbb{R}^d}\big(b_{i\nu}(\tau,\cdot)\frac{\partial K_\tau}{\partial x_i}\big)(x)\rho_\tau(x)\,dx\,d\tau.
    \end{equation*}
    Integrating by parts, for each $\nu=1,\ldots,m$, this yields,
    \begin{equation}\begin{aligned}\label{consequence, why B is martingale}
        &\;\;\;\;-\mathbb{E}^{\mathbb{P}}\big[\big(W^{(\nu)}_t-W^{(\nu)}_\theta\big)K_t(S_t)\big]=\sum\limits_{i=1}^d\int_\theta^t\int_{\mathbb{R}^d}K_\tau(x)\frac{\partial}{\partial x_i}\big(\rho_\tau(\cdot)b_{i\nu}(\tau,\cdot)\big)(x)\,dx\,d\tau\\
        &=\int_\theta^t\mathbb{E}^{\mathbb{P}}\big[K_\tau(S_\tau)\rho^{-1}_\tau\sum\limits_{i=1}^d\frac{\partial}{\partial x_i}\big(\rho_\tau(\cdot)b_{i\nu}(\tau,\cdot)\big)(S_\tau)\big]\,d\tau=\mathbb{E}^{\mathbb{P}}\big[K_t(S_t)\int_\theta^t\rho^{-1}_\tau\sum\limits_{i=1}^d\frac{\partial}{\partial x_i}\big(\rho_\tau(\cdot)b_{i\nu}(\tau,\cdot)\big)(S_\tau)\,d\tau\big].
    \end{aligned}\end{equation}
    Combining (\ref{definition of B}) and (\ref{consequence, why B is martingale}), we get
    \begin{equation*}
        \mathbb{E}^{\mathbb{P}}\bigg[\mathbb{E}^{\mathbb{P}}\big[\mathcal{K}|\mathcal{F}_t\big]\bigg(W^{(\nu)}_t-W^{(\nu)}_\theta+\int_\theta^t\rho_\tau^{-1}\sum\limits_{i=1}^d\frac{\partial}{\partial x_i}\big(\rho_\tau(\cdot)b_{i\nu}(\tau,\cdot)\big)(S_\tau)\,d\tau\bigg)\bigg]=0,\quad\text{for all}\quad0\leq\theta\leq t\leq T,
    \end{equation*}
    which is equivalent to (\ref{how to show B is martingale}), where the conditional expectation can be removed because both $W^{(\nu)}_t-W^{(\nu)}_\theta$ and $(S_\tau)_{\thefootnote\leq\tau\leq t}$ are $\mathcal{F}_t$-measurable. Hence, $(B^{\nu}_t)_{0\leq t\leq T}$ is a $\widehat{\mathbb{F}}$-martingale for each $\nu=1,\ldots,m$. In view of the continuity of the sample paths and the property,
    \begin{equation*}
        \big\langle B^{(\mu)}, B^{(\nu)}\big\rangle_t=\big\langle \widetilde{W}^{(\mu)}, \widetilde{W}^{(\nu)}\big\rangle_t=t\delta_{\mu\nu}\quad\text{for all}\quad1\leq\mu,\nu\leq m\quad\text{and}\quad0\leq t\leq T,
    \end{equation*}
    we can infer that each $(B^{\nu}_t)_{0\leq t\leq T}$ is a $\widehat{\mathbb{F}}$-Brownian motion such that $B^{(\mu)}$ and $B^{(\nu)}$ are mutually independent for all $\mu\neq\nu$, by appealing to Lévy theorem \cite[Theorem 5.12]{Le Gall}. Henceforth, $(B_t)_{0\leq t\leq T}$ is a $\mathbb{R}^m$-valued  $\widehat{\mathbb{F}}$-Brownian motion.
\end{proof}
%--------------------------------
%\subsection{Backward Itô integral}
Our main goal of this section is to verify that the time-reversal $(\widehat{S}_t)_{0\leq t\leq T}$ is a diffusion process, under some suitable backward filtrations. In fact, we furthermore specify its semimartingale decomposition in Lemma \ref{backward S is diffusion}. To achieve this goal, we need to introduce a notion of backward stochastic integration which uses finite sums of backward increments to approximate the stochastic integrals. Such scheme is essential to the proof of Lemma \ref{backward S is diffusion}, and is called the \textit{backward Itô integration}.\par
Consider two continuous semimartingales $X_t=X_0+M_t+K_t$ and $Y_t=Y_0+N_t+L_t$, where $(M_t)_{0\leq t\leq T}$ and $(N_t)_{0\leq t\leq T}$ are continuous local martingales, $(K_t)_{0\leq t\leq T}$ and $(L_t)_{0\leq t\leq T}$ are continuous finite variation processes. By analogy with its forward-time counterpart, the \textit{backward} Itô integral \cite{Le Gall} is defined by,
\begin{equation}
    \label{backward Itô integral}
    \int_0^tY_\theta\bullet dX_\theta\coloneqq\int_0^tY_\theta\,dM_\theta+\int_0^tY_\theta\,dK_\theta+\langle M,N\rangle_t,\quad\text{for all}\quad0\leq t\leq T.
\end{equation}
If $\Pi=\{t_0=0,t_1,\ldots,t_m=T\}$ is a partition of the time interval $[0,T]$, denote by $\norm{\Pi}\coloneqq\max\{t_{j}-t_{j-1}:\,1\leq j\leq m\}$. And we have the following convergence in probability \cite{Russo/Vallois},
\begin{equation*}
    \sum\limits_{0\leq j\leq m-1}Y_{t_{j+1}}(X_{t_{j+1}}-X_{t_j})\xlongrightarrow{\,\mathbb{P}\,}\int_0^TY_t\bullet dX_t,\quad\text{as}\quad\norm{\Pi}\to0.
\end{equation*}
And for all $f\in\mathcal{C}^2(\mathbb{R}^d;\mathbb{R})$, the backward Itô integral admits the change of variable formula \cite{Prato/Menaldi/Tubaro}, \cite{Russo/Vallois},
\begin{equation*}
    f(X_t)=f(X_0)+\sum\limits_{1\leq i\leq d}\int_0^t\frac{\partial f}{\partial x_i}(X_\theta)\bullet dX^{(i)}_\theta-\frac{1}{2}\sum\limits_{1\leq i,j\leq d}\int_0^t\frac{\partial^2f}{\partial x_i\partial x_j}(X_\theta)\,d\langle M^{(i)},M^{(j)}\rangle_\theta,
\end{equation*}
for all $0\leq t\leq T$. Following the definition of the backward Itô integral, in Lemma \ref{backward S is diffusion}, we show that the time-reversal $(\widehat{S}_t)_{0\leq t\leq T}$ is indeed a $\widehat{\mathbb{F}}$-diffusion.
%----------------------------
\subsection{Diffusions under time-reversal}
The main goal of this section says that a forward-time diffusion, with sufficient regularity on its drift and dispersion coefficients, remains a diffusion process under time-reversal with respect to a suitable backward filtration. It is important because we perform a time-reversal technique to the backward-time semimartingale decomposition of $\ell^\beta$ and $\mathcal{R}^\beta$ in Lemmas \ref{lem: sem. dec. l} and \ref{lem: sem. dec. R}, as well as to the trajectorial formulation of relative entropy dissipation in Theorems \ref{thm: Trajectorial rate of relative entropy dissipation, time-displacement, perturbed} and \ref{thm: Trajectorial rate of relative entropy dissipation, derivative}.
\begin{lemma}\label{backward S is diffusion}
    Given a $\mathbb{R}^d$-valued diffusion process $(S_t)_{0\leq t\leq T}$ adapted to $(\mathcal{F}_{t})_{0\leq t\leq T}$, define its time-reversal $(\widehat
    {S}_t)_{0\leq t\leq T}$ as in (\ref{time-reversed process}) and define the backward filtration $\widehat{\mathbb{F}}$ as in (\ref{backward filtration F}). Then, $(\widehat
    {S}_t)_{0\leq t\leq T}$ is an $\widehat{\mathbb{F}}$-adapted diffusion, i.e.~a strong Markovian semimartingale, with the decomposition,
    \begin{equation}\label{backward-time S}
        \widehat{S}^{(i)}_t=\widehat{S}^{(i)}_0+\int_0^t\widehat{a}_i(T-\theta,\widehat{S}_\theta)\,d\theta+\sum\limits_{\nu=1}^m\int_0^tb_{i\nu}(T-\theta,\widehat{S}_\theta)\,dB^{(\nu)}_\theta\quad\text{for all}\quad0\leq t\leq T,
    \end{equation}
    where for each $i=1,\ldots,d$,
    \begin{equation*}
        \widehat{a}_i(t,x)\coloneqq\sum\limits_{1\leq j\leq d}\frac{\partial\sigma_{ij}}{\partial x_j}(t,x)+\sum\limits_{1\leq j\leq d}\sigma_{ij}(t,x)\frac{\partial}{\partial x_j}\log \rho_t(x)-a_i(t,x),\quad\text{for all}\quad(t,x)\in[0,T]\times\mathbb{R}^d.
    \end{equation*}
\end{lemma}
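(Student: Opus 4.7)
The plan is to derive the decomposition (\ref{backward-time S}) directly from the forward SDE (\ref{forward diffusion equation}) by combining two ingredients: the backward It\^o calculus (\ref{backward Itô integral}) and the identification of the $\widehat{\mathbb{F}}$-Brownian motion $B$ from Lemma \ref{backward Brownian motion}. I would first rewrite the forward SDE on the tail interval $[T-t, T]$, which gives
\[
\widehat{S}_t^{(i)}-\widehat{S}_0^{(i)}=S_{T-t}^{(i)}-S_T^{(i)}=-\int_{T-t}^T a_i(\theta,S_\theta)\,d\theta-\sum_{\nu=1}^m\int_{T-t}^T b_{i\nu}(\theta,S_\theta)\,dW_\theta^{(\nu)}.
\]
The drift integral immediately reduces under the substitution $\theta\mapsto T-\theta$ to $-\int_0^t a_i(T-\theta,\widehat{S}_\theta)\,d\theta$, accounting for the last term in the formula for $\widehat{a}_i$.

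The stochastic integral requires more care. I would convert it into a \emph{backward} It\^o integral against $\widetilde{W}^{(\nu)}=W_{T-\cdot}^{(\nu)}-W_T^{(\nu)}$ using the definition (\ref{backward Itô integral}); the difference is a quadratic covariation term that, by It\^o's formula applied to $b_{i\nu}(\theta,S_\theta)$, produces the Stratonovich-type correction $\int_0^t\sum_j\partial_{x_j}b_{i\nu}(T-\theta,\widehat{S}_\theta)\,b_{j\nu}(T-\theta,\widehat{S}_\theta)\,d\theta$ summed over $\nu$. Next, Lemma \ref{backward Brownian motion} lets me replace $d\widetilde{W}^{(\nu)}$ by $dB^{(\nu)}+\rho_{T-\theta}^{-1}\sum_j\partial_{x_j}(\rho_{T-\theta}\,b_{j\nu})(\widehat{S}_\theta)\,d\theta$, which generates a second drift correction of the form $\sum_{\nu}b_{i\nu}\,\rho^{-1}\sum_j\partial_{x_j}(\rho\,b_{j\nu})$.

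The core algebraic step is to collect the three drift contributions and recognize them as $\widehat{a}_i$. Using the Leibniz identity
\[
\rho_t^{-1}\partial_{x_j}\bigl(\rho_t\,\sigma_{ij}(t,\cdot)\bigr)=\partial_{x_j}\sigma_{ij}(t,\cdot)+\sigma_{ij}(t,\cdot)\,\partial_{x_j}\log\rho_t,
\]
together with $\sigma_{ij}=\sum_\nu b_{i\nu}b_{j\nu}$, the Stratonovich correction and the Lemma \ref{backward Brownian motion} compensation combine into $\sum_j\partial_{x_j}\sigma_{ij}+\sum_j\sigma_{ij}\partial_{x_j}\log\rho_t$, which together with the $-a_i$ term yields precisely the stated formula. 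Finally, strong Markovianity of $\widehat{S}$ in $\widehat{\mathbb{F}}$ follows from the SDE (\ref{backward-time S}) with smooth coefficients driven by the $\widehat{\mathbb{F}}$-Brownian motion $B$ independent of $\widehat{\mathcal{F}}_0=\sigma(S_T)$, invoking the pathwise uniqueness argument analogous to the forward case.

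The main obstacle is the dual bookkeeping of corrections: one arising from passing from a forward It\^o integral against $W$ to a backward It\^o integral against $\widetilde{W}$, and another from compensating $\widetilde{W}$ (which fails to be an $\widehat{\mathbb{F}}$-martingale) into the genuine Brownian motion $B$. Each correction alone is mildly unpleasant to derive; checking that their sum, when added to $-a_i$, collapses via the logarithmic-derivative identity to the Haussmann--Pardoux drift $\widehat{a}_i$ is the subtle step where the role of the density $\rho_t$ -- and indirectly the forward Kolmogorov equation satisfied by $\rho_t$ -- becomes indispensable.
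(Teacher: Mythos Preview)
Your proposal is correct and follows essentially the same route as the paper: convert the forward It\^o integral into a backward It\^o integral against $\widetilde{W}$ (picking up the quadratic-covariation correction $\sum_{j,\nu}b_{j\nu}\partial_{x_j}b_{i\nu}$), then substitute $d\widetilde{W}^{(\nu)}=dB^{(\nu)}+\rho^{-1}\sum_j\partial_{x_j}(\rho\,b_{j\nu})\,d\theta$ from Lemma~\ref{backward Brownian motion}, and finally collect the three drift pieces into $\widehat a_i$ via the Leibniz identity. One minor over-claim in your commentary: the forward Kolmogorov equation for $\rho_t$ is \emph{not} actually used anywhere in this computation---the density enters only through the definition of $B$ in Lemma~\ref{backward Brownian motion}, whose proof relies on the Markov property and integration by parts rather than on the PDE for $\rho_t$.
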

\begin{proof}
    From (\ref{forward diffusion equation}) and by the Itô formula, the process
    \begin{equation*}
        b_{i\nu}(t,S_t)-b_{i\nu}(0,\xi)-\sum\limits_{1\leq j\leq d}\sum\limits_{1\leq\kappa\leq m}\int_0^t\big(b_{j\kappa}(\theta,\cdot)\frac{\partial b_{i\nu}}{\partial x_j}(\theta,\cdot)\big)(S_\theta)\,dW^{(\kappa)}_\theta
    \end{equation*}
    is of finite variation. Hence,
    \begin{sequation}\label{quadratic variation}
        \big\langle b_{i\nu}(\cdot,S),W^{(\nu)}\big\rangle_t=\sum\limits_{1\leq j\leq d}\int_0^t\big(b_{j\nu}(\theta,\cdot)\frac{\partial b_{i\nu}}{\partial x_j}(\theta,\cdot)\big)(S_\theta)\,d\theta\quad\text{for all}\quad0\leq t\leq T.
    \end{sequation}
    On the other hand, we can express the forward-time diffusion $(S_t)_{0\leq t\leq T}$ in terms of backward Itô integral,
    \begin{equation*}
        S^{(i)}_t-\xi^{(i)}-\int_0^ta_i(\theta,S_\theta)\,d\theta=\sum\limits_{\nu=1}^m\int_0^tb_{i\nu}(\theta,S_\theta)\bullet dW^{(\nu)}_\theta-\big\langle b_{i\nu}(\cdot,S),W^{(\nu)}\big\rangle_t.
    \end{equation*}
    Combining with (\ref{quadratic variation}), we observe that
    \begin{equation*}
        S^{(i)}_t=\xi^{(i)}-\int_0^t\bigg(\sum\limits_{j=1}^d\sum\limits_{\nu=1}^m b_{j\nu}(\theta,\cdot)\frac{\partial b_{i\nu}}{\partial x_j}(\theta,\cdot)-a_i(\theta,\cdot)\bigg)(S_\theta)\,d\theta+\sum\limits_{\nu=1}^m\int_0^tb_{i\nu}(\theta,S_\theta)\bullet dW^{(\nu)}_\theta.
    \end{equation*}
    Evaluating also at the terminal time point $T$, this gives
    \begin{equation*}
        S^{(i)}_t=S^{(i)}_T+\int_t^T\bigg(\sum\limits_{j=1}^d\sum\limits_{\nu=1}^m b_{j\nu}(\theta,\cdot)\frac{\partial b_{i\nu}}{\partial x_j}(\theta,\cdot)-a_i(\theta,\cdot)\bigg)(S_\theta)\,d\theta-\sum\limits_{\nu=1}^m\int_t^Tb_{i\nu}(\theta,S_\theta)\bullet dW^{(\nu)}_\theta,
    \end{equation*}
    as well as
    \begin{equation*}
        \widehat{S}_t=\widehat{S}_0+\int_0^t\bigg(\sum\limits_{j=1}^d\sum\limits_{\nu=1}^m b_{j\nu}(T-\theta,\cdot)\frac{\partial b_{i\nu}}{\partial x_j}(T-\theta,\cdot)-a_i(T-\theta,\cdot)\bigg)(\widehat{S}_\theta)\,d\theta+\sum\limits_{\nu=1}^m\int_0^tb_{i\nu}(T-\theta,\widehat{S}_\theta)\, d\widetilde{W}^{(\nu)}_\theta.
    \end{equation*}
    via time-reversal. In light of (\ref{definition of B}), we could write the time-reversed process $(\widehat{S}_t)_{0\leq t\leq T}$ into
    \begin{equation*}\begin{aligned}
        \widehat{S}_t&=\widehat{S}_0+\sum\limits_{\nu=1}^m\int_0^tb_{i\nu}(T-\theta,\widehat{S}_\theta)\,dB^{(\nu)}_\theta+\int_0^t\bigg(\sum\limits_{j=1}^d\sum\limits_{\nu=1}^m b_{j\nu}(T-\theta,\cdot)\frac{\partial b_{i\nu}}{\partial x_j}(T-\theta,\cdot)\bigg)(\widehat{S}_\theta)\,d\theta\\
        &\quad+\int_0^t\bigg(\sum\limits_{\nu=1}^m\rho^{-1}_{T-\theta}(\cdot)b_{i\nu}(T-\theta,\cdot)\sum\limits_{j=1}^d\frac{\partial}{\partial x_j}\big(\rho_{T-\theta}(\cdot)b_{j\nu}(T-\theta,\cdot)\big)-a_i(T-\theta,\cdot)\bigg)(\widehat{S}_\theta)\,d\theta,
    \end{aligned}\end{equation*}
    which provides a semimartingale decomposition for the $\widehat{\mathbb{F}}$-adapted process $(\widehat
    {S}_t)_{0\leq t\leq T}$.\par
    The strong Markovian property follows from the existence of a strong solution (in our assumption) of (\ref{forward diffusion equation}) which is pathwise unique, see Le Gall \cite[Corollary 8.8]{Le Gall}. Hence, the time-reversal $(\widehat
    {S}_t)_{0\leq t\leq T}$ is a $\widehat{\mathbb{F}}$-diffusion process.
    Calculating the drift coefficients gives us (\ref{backward-time S}).
\end{proof}
Under sufficient regularity conditions, the Lemma \ref{backward S is diffusion} shows that the time-reversal of a diffusion process remains a diffusion, adapted to a suitable backward filtration. This lemma paves the way to the investigation of many relevant processes backward in time, which also gives us their semimartingale decomposition.

%--------------------------------
%--------------------------------

\section{Semimartingale decomposition of $\ell^\beta$ and $\mathcal{R}^\beta$}\label{sec: semimartingale}
The aim of this section is to provide a semimartingale decomposition to the likelihood ratio process $\ell^\beta$ (\ref{likelihood ratio process}) and its logarithm, the relative entropy process $\mathcal{R}^\beta$ (\ref{relative entropy process}), both running backward in time.  Later in Lemma \ref{lem: M is L2 martingale}, it is verifies that the local martingale part from the semimartingale decomposition of $(\mathcal{R}^\beta_{T-t}(X_{T-t}))_{0\leq t\leq T}$ is in fact a square integrable martingale, adapted to a suitable backward filtration. This martingale property allows us to take $\mathbb{P}^\beta$-expectation to the backward-time $\mathcal{R}^\beta$ without invoking the localization sequence of stopping times, and henceforth retrieves the relative entropy quantity.\par
After presenting the general principles of time-reversal of diffusions in Section \ref{sec: time-reversal}, let us turn our attention to the Itô-Langevin dynamics (\ref{perturbed Itô-Langevin dynamics}). The semimartingale decomposition of $\mathcal{R}^\beta$, and hence also of $\ell^\beta$, requires some knowledge of the differential structure of the likelihood ratio $\ell^\beta_t(x)=p^\beta_t(x)e^{2\psi(x)}$, $(t,x)\in\mathbb{R}_+\times\mathbb{R}^d$. First, we write down the partial differential equation which is satisfied by the density function $p^\beta_t(x)$, $(t,x)\in\mathbb{R}_+\times\mathbb{R}^d$. This type of partial differential equation is called the Fokker-Planck equation, which is internally connected to the Itô-Langevin dynamics, as stated in Section \ref{sec: stochastic dynamics}.
%----------------------------------
\subsection{Fokker--Planck equation}
In Section \ref{sec: stochastic dynamics}, we denote by $\mathbb{P}^\beta$ the distribution on $\mathcal{C}=\mathcal{C}(\mathbb{R}_+;\mathbb{R}^d)$ of the strong solution process $(X_t)_{t\geq0}$ of the Itô-Langevin stochastic differential equation (\ref{perturbed Itô-Langevin dynamics}). At each $t\geq0$, we use $P^\beta_t$ to denote the law of the marginal $X_t$. Each $P^\beta_t$ is absolutely continuous with respect to the Lebesgue measure on $\mathbb{R}^d$ and thus induces a probability density $p^\beta_t(\cdot):\mathbb{R}^d\to\mathbb{R}_+$. The Itô--Langevin dynamics is internally connected to the Fokker---Planck equation in that $p^\beta_t(\cdot)$ satisfies the partial differential equation \cite{Jordan/Kinderlehrer/Otto},
\begin{equation}
    \label{perturbed Fokker-Plank}
    \frac{\partial p^\beta_t}{\partial t}(x)=\sum\limits_{j=1}^d\frac{\partial}{\partial x_j}\bigg(\big(\frac{\partial\psi}{\partial x_j}(x)+\beta(x)I_{\{t>t_0\}}\big)p^\beta_t(x)\bigg)+\frac{1}{2}\sum\limits_{j=1}^d\frac{\partial^2p^\beta_t}{\partial x_j^2}(x)\quad\text{with}\quad p^\beta_{0}(\cdot)=p_{0}(\cdot),
\end{equation}
for all $(t,x)\in\mathbb{R}_+\times\mathbb{R}^d$. Here $p^\beta_0(\cdot)=p_0(\cdot)$ is the density function of the initial distribution $P_0\sim X_0$ to (\ref{perturbed Itô-Langevin dynamics}) against the Lebesgue measure on $\mathbb{R}^d$. The existence and uniqueness of a solution to (\ref{perturbed Fokker-Plank}) is guaranteed, see \cite[Section 4]{Ji/Qi/Shen/Yi}. The solution $p^\beta_t(\cdot)$ conserves its $L^1(\mathbb{R}^d)$ norm \cite{Karatzas/Schachermayer/Tschiderer}, which means that
\begin{equation*}
    \int_{\mathbb{R}^d}p^\beta_t(x)\,dx\equiv1,\quad\text{for all}\quad(t,x)\in\mathbb{R}_+\times\mathbb{R}^d.
\end{equation*}
And this conservation principle confirms that $(p^\beta_t(\cdot))_{t\geq0}$ is indeed a family of probability densities on $\mathbb{R}^d$.\par
An observation of (\ref{perturbed Fokker-Plank}) tells us that the time-evolution of $(P^\beta_t)_{t\geq0}$, or equivalently of $(p^\beta_t(\cdot))_{t\geq0}$, is governed by the real-valued function $\psi(\cdot)$ when $0\leq t\leq t_0$, and additionally by the $\mathbb{R}^d$-valued perturbation $\beta(\cdot)$ when $t>t_0$. If the perturbation is switched off, i.e.~$\beta(\cdot)$ vanishes, the family of probability measures generated from (\ref{perturbed Fokker-Plank}) is denoted by $(P^0_t)_{t\geq0}$ with their densities denoted by $(p^0_t(\cdot))_{t\geq0}$. Here, the zero-script simply indicates that this is the case of vanishing perturbation.\par
To write down the relative entropy (\ref{relative entropy}), we have introduced a $\sigma$-finite reference measure $Q$ on the Borel sets of $\mathbb{R}^d$. This reference measure $Q$ is defined via its density function $q(\cdot)=\exp(-2\psi(\cdot)):\mathbb{R}^d\to\mathbb{R}_+$ against the Lebesgue measure on $\mathbb{R}^d$. In contrast to the densities $(p^\beta_t(\cdot))_{t\geq0}$, the density $q(\cdot)$ is time-invariant and solves the stationary version of the Fokker-Plank equation \cite{Jordan/Kinderlehrer/Otto},
\begin{equation}
    \label{Fokker-Plank for Q}
    \sum\limits_{j=1}^d\frac{\partial}{\partial x_j}\big(\frac{\partial\psi}{\partial x_j}(\cdot)q(\cdot)\big)(x)+\frac{1}{2}\sum\limits_{j=1}^d\frac{\partial^2q}{\partial x_j^2}(x)=0,\quad\text{for all}\quad x\in\mathbb{R}^d.
\end{equation}\par
Some literature also name the equations (\ref{perturbed Fokker-Plank}) and (\ref{Fokker-Plank for Q}) as the forward-Kolmogorov equations. But they refer to the same thing. Remember that we have defined the relative entropy process $(\mathcal{R}^\beta_t(X_t))_{0\leq t\leq T}=(\log\ell^\beta_t(X_t))_{0\leq t\leq T}$ (\ref{relative entropy process}) via the function $\ell^\beta_t(x)=p^\beta_t(x)/q(x)$. Therefore, to understand the semimartingale decomposition of the processes $\ell^\beta$ and $\mathcal{R}^\beta$, either forward-time or backward-time, we need to characterize the differential structure of the densities $(p^\beta_t(\cdot))_{0\leq t\leq T}$ and $q(\cdot)$ as in (\ref{perturbed Fokker-Plank}) and (\ref{Fokker-Plank for Q}).

%------------------------------
\subsection{Filtration and time-displacement}
 At this stage, it becomes important to specify the relevant filtrations. We denote by $(\mathcal{F}_t)_{t\geq0}$ the smallest forward continuous filtration to which the Brownian motion $(W^\beta_t)_{t\geq0}$ and the solution process $(X_t)_{t\geq0}$ of (\ref{perturbed Itô-Langevin dynamics}) is adapted. That is,
\begin{equation*}
    \mathcal{F}_t\coloneqq\sigma(X_\theta,W^\beta_\theta:\,0\leq\theta\leq t),\quad\text{for all}\quad t\geq0
\end{equation*}
modulo $\mathbb{P}^\beta$-augmentation. Likewise, given the compact time interval $[0,T]$, we denote by $(\mathcal{G}_{T-t})_{0\leq t\leq T}$ the backward continuous filtration generated by the backward processes $(W^\beta_{T-t})_{t\geq0}$ and $(X_{T-t})_{0\leq t\leq T}$. That is,
\begin{equation*}
    \mathcal{G}_{T-t}\coloneqq\sigma(X_{T-\theta},W^\beta_{T-\theta}:\,0\leq\theta\leq t),\quad\text{for all}\quad 0\leq t\leq T.
\end{equation*}
Notice that we are using similar notations for the forward-time and backward-time filtrations as in Section \ref{sec: time-reversal}. We hope this convention will leave no ambiguity because these two filtrations $(\mathcal{F}_t)_{0\leq t\leq T}$ and $(\mathcal{G}_{T-t})_{0\leq t\leq T}$ are essentially constructed in almost the same way to the filtrations in Section \ref{sec: time-reversal}, except that the filtrations here are generated by the solution process $(X_t)_{t\geq0}$, rather than a general diffusion process $(S_t)_{t\geq0}$.\par
Even though $(W^\beta_t)_{t\geq0}$ is a $(\mathcal{F}_t)_{t\geq0}$-adapted Brownian motion running forward in time, its time-reversal $(W^\beta_{T-t})_{0\leq t\leq T}$ is not necessarily a backward-time Brownian motion adapted to $(\mathcal{G}_{T-t})_{0\leq t\leq T}$. It turns out that this time-reversal process contains a non-trivial finite variation part in its semimartingale decomposition. And to construct a true $(\mathcal{G}_{T-t})_{0\leq t\leq T}$-Brownian motion backward in time, we need to subtract this finite variation process.
\begin{lemma}
    In the Itô-Langevin dynamics (\ref{perturbed Itô-Langevin dynamics}), $(W^\beta_t)_{t\geq0}$ is denoted to be the $d$-dimensional Brownian motion.
    The backward-time process
    \begin{equation}
        \overline{W}^{\mathbb{P}^\beta}_{T-t}\coloneqq W^\beta_{T-t}-W^\beta_T-\int_0^t\nabla\log p_{T-\theta}^\beta(X_{T-\theta})\,d\theta,\quad\text{for all}\quad0\leq t\leq T
    \end{equation}
    is a Brownian motion of the backward filtration $(\mathcal{G}_{T-t})_{0\leq t\leq T}$ under $\mathbb{P}^\beta$. Furthermore, the time-reversed process $(X_{T-t})_{0\leq t\leq T}$ is a $(\mathcal{G}_{T-t})_{0\leq t\leq T}$-diffusion process with its semimartingale decomposition given by
    \begin{equation}\label{semimartingale decomposition of X}
        dX_{T-t}=\nabla\log p_{T-t}^\beta(X_{T-t})\,dt+(\nabla\psi+\beta I_{\{0\leq t<T-t_0 \}})(X_{T-t})\,dt+d\overline{W}^{\mathbb{P}^\beta}_{T-t},\quad\text{for all}\quad0\leq t\leq T,
    \end{equation}
    with respect to the backward filtration $(\mathcal{G}_{T-t})_{0\leq t\leq T}$.
\end{lemma}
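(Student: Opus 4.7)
The plan is to specialize the general time-reversal result (Lemma \ref{backward S is diffusion}) to the Itô--Langevin dynamics (\ref{perturbed Itô-Langevin dynamics}). In the notation of Section \ref{sec: time-reversal}, I would identify $S_t = X_t$, the dispersion matrix as the identity so that $b_{i\nu}(t,x) = \delta_{i\nu}$ and hence $\sigma_{ij}(t,x) = \delta_{ij}$, and the drift as $a_i(t,x) = -\partial_i\psi(x) - \beta_i(x)I_{\{t > t_0\}}$. The marginal density in the abstract setup becomes $\rho_t = p^\beta_t$, which is positive and smooth by parabolic regularity of the Fokker--Planck equation (\ref{perturbed Fokker-Plank}), so that $\nabla\log p^\beta_t$ is well-defined and (\ref{backward filtration F}) is exactly the filtration $(\mathcal{G}_{T-t})_{0 \le t \le T}$ up to $\mathbb{P}^\beta$-augmentation.

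First I would deduce the Brownian character of $\overline{W}^{\mathbb{P}^\beta}$. Plugging the above $(b_{i\nu})$ and $\rho_t = p^\beta_t$ into the formula (\ref{definition of B}) from Lemma \ref{backward Brownian motion}, the correction term collapses to
\[
\int_0^t \big(p^\beta_{T-\theta}\big)^{-1}\,\partial_\nu p^\beta_{T-\theta}(X_{T-\theta})\,d\theta = \int_0^t \partial_\nu \log p^\beta_{T-\theta}(X_{T-\theta})\,d\theta,
\]
while $\widetilde{W}_t = W^\beta_{T-t} - W^\beta_T$. The resulting $B_t$ coincides componentwise with $\overline{W}^{\mathbb{P}^\beta}_{T-t}$, so Lemma \ref{backward Brownian motion} directly yields that it is a $d$-dimensional $(\mathcal{G}_{T-t})$-Brownian motion, independent of $X_T$.

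Next I would read off the backward drift from Lemma \ref{backward S is diffusion}. With $\sigma_{ij} = \delta_{ij}$ the first sum in the formula for $\widehat{a}_i$ vanishes, leaving
\[
\widehat{a}_i(t,x) = \partial_i \log p^\beta_t(x) - a_i(t,x) = \partial_i \log p^\beta_t(x) + \partial_i\psi(x) + \beta_i(x) I_{\{t > t_0\}}.
\]
Substituting $t \mapsto T-t$ and using $I_{\{T-t > t_0\}} = I_{\{0 \le t < T - t_0\}}$ yields the announced decomposition (\ref{semimartingale decomposition of X}). The strong Markov property of $(X_{T-t})$ transfers from Lemma \ref{backward S is diffusion} via the pathwise uniqueness already established in Lemma \ref{lem: strong solution, perturbed Itô-Langevin}.

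The main obstacle is that Lemma \ref{backward S is diffusion} was stated for drifts of class $\mathcal{C}^\infty([0,T]\times\mathbb{R}^d)$, whereas our drift has a time discontinuity at $t_0$ because of the indicator $I_{\{t > t_0\}}$. I would handle this by splitting the time horizon into $[0, T-t_0]$ and $[T - t_0, T]$ (on which the drift is smooth in time) and applying Lemma \ref{backward S is diffusion} separately to each subinterval, then concatenating the two semimartingale decompositions using continuity of the paths of $X$ and of $\overline{W}^{\mathbb{P}^\beta}$. A secondary but routine point is confirming the smoothness and strict positivity of $p^\beta_t(\cdot)$ for $t > 0$, so that $\nabla\log p^\beta_t$ makes pointwise sense; this is standard parabolic regularity for (\ref{perturbed Fokker-Plank}) with a smooth, linearly bounded drift. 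Once these two points are handled, the statement follows directly from the abstract results of Section \ref{sec: time-reversal}.
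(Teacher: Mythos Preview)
Your proposal is correct and follows exactly the same approach as the paper, namely specializing Lemmas \ref{backward Brownian motion} and \ref{backward S is diffusion} to the Itô--Langevin setting with $b_{i\nu}=\delta_{i\nu}$ and $\rho_t=p^\beta_t$. If anything you are more careful than the paper's own proof, which simply cites those two lemmas without spelling out the substitutions or addressing the time-discontinuity of the drift at $t_0$ that you flag and resolve by splitting the interval.
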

\begin{proof}
    To verify that $(\overline{W}^{\mathbb{P}^\beta}_{T-t})_t$ is a $(\mathcal{G}_{T-t})_{0\leq t\leq T}$-Brownian motion, we use Lemma \ref{backward Brownian motion}. And to verify that $(X_{T-t})_{0\leq t\leq T}$ is a $(\mathcal{G}_{T-t})_{0\leq t\leq T}$-diffusion process with its semimartingale decomposition (\ref{semimartingale decomposition of X}), we use Lemma \ref{backward S is diffusion}. And then the assertion is verified.
\end{proof}
Having selected the suitable backward filtration and Brownian motions, we look at the semimartingale decomposition of the likelihood ratio process $(\ell^\beta_{T-t}(X_{T-t}))_t$ and of the relative entropy process $(\mathcal{R}^\beta_{T-t}(X_{T-t}))_t$, whose pathwise behavior is the essence to the trajectorial formulation of relative entropy dissipation.
%----------------------------
\subsection{Semimartingale decomposition}
Looking back to the forward-time coordinate process $(X_t)_{t\geq0}$ on $\mathcal{C}$ characterized by (\ref{perturbed Itô-Langevin dynamics}), we aim for the semimartingale decomposition of the processes $\ell^\beta$ and $\mathcal{R}^\beta$ running under time-reversal. This will be the first step to understand the trajectorial formulation of relative entropy dissipation in Section \ref{sec: trajectorial dissipation}.
\begin{lemma}\label{lem: sem. dec. l}
    The backward-time likelihood ratio process $(\ell^\beta_{T-t}(X_{T-t}))_{0\leq t\leq T}$ is a semimartingale adapted to $(\mathcal{G}_{T-t})_{0\leq t\leq T}$ with decomposition
    \begin{equation}\begin{aligned}\label{semimartingale decomposition of l}
        d\ell^\beta_{T-t}(X_{T-t})&=\big(2\beta\cdot\nabla\psi-\sum\limits_{1\leq i\leq d}\frac{\partial\beta^{(i)}}{\partial x_i}\big)(X_{T-t})\ell^\beta_{T-t}(X_{T-t})I_{\{0\leq t<T-t_0\}}\,dt\\
        &\quad+\normy{\nabla\ell^\beta_{T-t}(X_{T-t})}^2\,dt+\nabla\ell^\beta_{T-t}(X_{T-t})\,d\overline{W}^{\mathbb{P}^\beta}_{T-t},\quad\text{for all}\quad0\leq t\leq T.
    \end{aligned}\end{equation}
\end{lemma}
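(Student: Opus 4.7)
The approach is to apply the time-dependent Itô formula to $u(t, X_{T-t})$ where $u(t,x) \coloneqq \ell^\beta_{T-t}(x)$, treating the backward-time coordinate $X_{T-t}$ as a $(\mathcal{G}_{T-t})_{0\leq t\leq T}$-semimartingale via the decomposition (\ref{semimartingale decomposition of X}). Three ingredients combine in this expansion: the partial $t$-derivative of $u$, the first-order term $\nabla u\cdot dX_{T-t}$, and the quadratic-variation correction $\tfrac{1}{2}\Delta u\,dt$, where the Laplacian appears because $\overline{W}^{\mathbb{P}^\beta}$ has identity covariance.

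The first step is to convert the Fokker--Planck equation (\ref{perturbed Fokker-Plank}) for $p^\beta_s$ into an evolution equation for $\ell^\beta_s = p^\beta_s e^{2\psi}$. Computing $\nabla p^\beta$ and $\Delta p^\beta$ via the product rule and the identity $\nabla q = -2q\,\nabla\psi$, I expect the $\normx{\nabla\psi}^2$ and $\Delta\psi$ contributions to cancel upon multiplying by $e^{2\psi}$, leaving an equation of the shape $\partial_s\ell^\beta_s = \tfrac{1}{2}\Delta\ell^\beta_s - \nabla\psi\cdot\nabla\ell^\beta_s + I_{\{s>t_0\}}[(\nabla\cdot\beta)\,\ell^\beta_s + \beta\cdot\nabla\ell^\beta_s - 2(\beta\cdot\nabla\psi)\,\ell^\beta_s]$.

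Substituting this into the backward Itô expansion, with $\partial_t u = -\partial_s\ell^\beta|_{s=T-t}$ and using the drift $\nabla\log p^\beta_{T-t} + \nabla\psi + \beta\, I_{\{0\leq t<T-t_0\}}$ from (\ref{semimartingale decomposition of X}), I expect two clean cancellations: the Laplacian from $-\partial_s\ell^\beta$ cancels the Itô correction $\tfrac{1}{2}\Delta\ell^\beta$, and the symmetric $\pm\beta\cdot\nabla\ell^\beta$ perturbation contributions annihilate. The remaining non-perturbed drift reassembles through the identity $\nabla\ell^\beta\cdot(\nabla\log p^\beta + 2\nabla\psi) = \nabla\ell^\beta\cdot\nabla\log\ell^\beta$ to yield the stated quadratic term, while the $\beta$-dependent drift collapses to $\ell^\beta(2\beta\cdot\nabla\psi - \nabla\cdot\beta)\,I_{\{0\leq t<T-t_0\}}$. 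Uniqueness of the semimartingale decomposition adapted to the backward filtration $(\mathcal{G}_{T-t})_{0\leq t\leq T}$ then pins the martingale part to $\nabla\ell^\beta_{T-t}(X_{T-t})\,d\overline{W}^{\mathbb{P}^\beta}_{T-t}$.

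The main obstacle is careful sign- and index-bookkeeping when passing from the forward-time Fokker--Planck equation to the backward-time Itô formula, and tracking every $\nabla\psi$ contribution across the change of variables $p^\beta \leftrightarrow \ell^\beta$. The cancellations look delicate but are forced by the structure of (\ref{semimartingale decomposition of X}), whose drift was engineered precisely so that the time-reversal of the Fokker--Planck flow is compatible with the backward Brownian motion $\overline{W}^{\mathbb{P}^\beta}$. Smoothness of $\psi$, $\beta$, and $p^\beta$ assured in Section \ref{sec: stochastic dynamics} is enough to justify every pointwise differentiation.
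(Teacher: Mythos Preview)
Your proposal is correct and follows exactly the paper's approach: derive the PDE satisfied by $\ell^\beta_s$ from the Fokker--Planck equation (\ref{perturbed Fokker-Plank}), then apply the time-dependent It\^o formula using the backward semimartingale decomposition (\ref{semimartingale decomposition of X}) of $X_{T-t}$. One small remark: your computation correctly produces the drift term $\nabla\ell^\beta\cdot\nabla\log\ell^\beta = \normx{\nabla\ell^\beta}^2/\ell^\beta$, which is what is needed for consistency with Lemma~\ref{lem: sem. dec. R}; the printed $\normy{\nabla\ell^\beta}^2$ in (\ref{semimartingale decomposition of l}) appears to be a typographical slip in the statement rather than a flaw in your argument.
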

\begin{proof}
    Since $\ell^\beta_{T-t}(X_{T-t})=dP^\beta_{T-t}/dQ$, we know $\ell^\beta_{T-t}(\cdot)=p^\beta_{T-t}(\cdot)+\exp(2\psi(\cdot))$. From (\ref{perturbed Fokker-Plank}), (\ref{Fokker-Plank for Q}), we can compute that
    \begin{equation*}
        \frac{\partial\ell^\beta_{T-t}}{\partial t}(x)=-\frac{1}{2}\Delta\ell^\beta_{T-t}(x)+\nabla\ell^\beta_{T-t}\cdot\big(\nabla\psi-\beta I_{\{0\leq t<T-t_0\}}\big)(x)+\big(2\beta\cdot\nabla\psi-\sum\limits_{1\leq i\leq d}\frac{\partial\beta^{(i)}}{\partial x_i}\big)(x)\ell^\beta_{T-t}(x)I_{\{0\leq t<T-t_0\}}.
    \end{equation*}
    Applying (\ref{semimartingale decomposition of X}) and invoking the Itô formula, (\ref{semimartingale decomposition of l}) follows, and the assertion is verified.
\end{proof}
Lemma \ref{lem: sem. dec. l} gives us a $(\mathcal{G}_{T-t})_{0\leq t\leq T}$ semimartingale decomposition of the backward-time likelihood ratio process $(\ell^\beta_{T-t})_{0\leq t\leq T}$. Since taking its logarithm produces the relative entropy process $(\mathcal{R}^\beta_{T-t})_{0\leq t\leq T}$, we write the following derivation.
\begin{lemma}\label{lem: sem. dec. R}
    The backward-time relative entropy process $(\mathcal{R}^\beta_{T-t}(X_{T-t}))_{0\leq t\leq T}$ is a semimartingale adapted to $(\mathcal{G}_{T-t})_{0\leq t\leq T}$ with decomposition
    \begin{equation}\begin{aligned}\label{semimartingale decomposition of R}
        d\mathcal{R}^\beta_{T-t}(X_{T-t})&=\big(2\beta\cdot\nabla\psi-\sum\limits_{i=1}^d\frac{\partial\beta^{(i)}}{\partial x_i}\big)(X_{T-t})I_{\{0\leq t<T-t_0\}}\,dt\\
        &\quad+\frac{1}{2}\normy{\nabla\mathcal{R}^\beta_{T-t}(X_{T-t})}^2\,dt+\nabla\mathcal{R}^\beta_{T-t}(X_{T-t})\,d\overline{W}^{\mathbb{P}^\beta}_{T-t},\quad\text{for all}\quad0\leq t\leq T.
    \end{aligned}\end{equation}
\end{lemma}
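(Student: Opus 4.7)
The plan is to obtain (\ref{semimartingale decomposition of R}) by applying the Itô formula to the function $f(x) = \log x$ composed with the positive semimartingale $(\ell^\beta_{T-t}(X_{T-t}))_{0\leq t\leq T}$, whose decomposition is already supplied by Lemma \ref{lem: sem. dec. l}. Positivity of $\ell^\beta_{T-t}$ comes for free: the density $p^\beta_t(\cdot)$ is strictly positive as the solution to the Fokker--Planck equation (\ref{perturbed Fokker-Plank}) with smooth coefficients, and $q(\cdot) = \exp(-2\psi(\cdot)) > 0$, so $\ell^\beta_{T-t} = p^\beta_{T-t}/q$ takes values in $(0, \infty)$, which makes $\log$ smooth on its range.

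From (\ref{semimartingale decomposition of l}), write the decomposition as $d\ell^\beta_{T-t}(X_{T-t}) = A_{T-t}(X_{T-t})\, \ell^\beta_{T-t}(X_{T-t})\, dt + \|\nabla\ell^\beta_{T-t}(X_{T-t})\|^2\, dt + \nabla\ell^\beta_{T-t}(X_{T-t})\, d\overline{W}^{\mathbb{P}^\beta}_{T-t}$, where $A_{T-t}(x) \coloneqq (2\beta\cdot\nabla\psi - \sum_i \partial\beta^{(i)}/\partial x_i)(x)\, I_{\{0\leq t<T-t_0\}}$. The quadratic variation of the martingale part is $d\langle \ell^\beta_{T-\cdot}(X_{T-\cdot})\rangle_t = \|\nabla\ell^\beta_{T-t}(X_{T-t})\|^2\, dt$, since $(\overline{W}^{\mathbb{P}^\beta}_{T-t})_{0\leq t\leq T}$ is a $(\mathcal{G}_{T-t})$-Brownian motion. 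Itô's formula then gives $d\mathcal{R}^\beta_{T-t}(X_{T-t}) = \ell^{-1}\, d\ell - \tfrac{1}{2}\ell^{-2}\, d\langle \ell\rangle$, and after substitution the finite-variation contribution from $\ell^{-1}\|\nabla\ell\|^2$ combines with the Itô correction $-\tfrac{1}{2}\ell^{-2}\|\nabla\ell\|^2$ to yield exactly $\tfrac{1}{2}\ell^{-2}\|\nabla\ell\|^2 = \tfrac{1}{2}\|\nabla\log\ell\|^2 = \tfrac{1}{2}\|\nabla\mathcal{R}^\beta\|^2$. The drift term $A_{T-t}$ is preserved unchanged, and the martingale part becomes $\ell^{-1}\nabla\ell\cdot d\overline{W}^{\mathbb{P}^\beta}_{T-t} = \nabla\mathcal{R}^\beta_{T-t}(X_{T-t})\cdot d\overline{W}^{\mathbb{P}^\beta}_{T-t}$.

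The delicate point is purely a regularity/integrability check rather than a computational one: one must justify the application of Itô's formula in the presence of the indicator $I_{\{0\leq t<T-t_0\}}$ (which introduces only a jump in a bounded-variation drift and causes no issue, since the drift coefficient is bounded and $\beta(\cdot)$ is smooth with compact support), and one must verify that $1/\ell^\beta_{T-t}(X_{T-t})$ and $\nabla\log\ell^\beta_{T-t}(X_{T-t})$ are locally bounded along the trajectories so that the stochastic integrals are well defined. Both follow from the smoothness and positivity of $p^\beta_t$ away from the initial time, together with the compact-support assumption on $\beta$ and the smoothness of $\psi$; I would invoke the standard localization by stopping times $\tau_n = \inf\{t : \ell^\beta_{T-t}(X_{T-t}) \notin [1/n, n]\}$ if needed, and then pass to the limit. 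The remainder of the argument is bookkeeping that matches terms against (\ref{semimartingale decomposition of R}).
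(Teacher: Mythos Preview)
Your approach is exactly the paper's: apply the It\^o formula for $\log(\cdot)$ to the semimartingale $\ell^\beta_{T-t}(X_{T-t})$ from Lemma~\ref{lem: sem. dec. l}. However, the arithmetic you display does not actually close. Taking (\ref{semimartingale decomposition of l}) as written, the finite-variation piece after multiplication by $\ell^{-1}$ is $\ell^{-1}\|\nabla\ell\|^2$, and together with the It\^o correction $-\tfrac{1}{2}\ell^{-2}\|\nabla\ell\|^2$ you get $\bigl(\ell^{-1}-\tfrac{1}{2}\ell^{-2}\bigr)\|\nabla\ell\|^2$, which is \emph{not} $\tfrac{1}{2}\ell^{-2}\|\nabla\ell\|^2$ unless $\ell\equiv 1$.

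The source of the mismatch is a typo in (\ref{semimartingale decomposition of l}) itself. If you redo the It\^o computation in the proof of Lemma~\ref{lem: sem. dec. l} using the backward dynamics (\ref{semimartingale decomposition of X}) and the identity $\nabla\log\ell^\beta_{T-t}=\nabla\log p^\beta_{T-t}+2\nabla\psi$, the second drift term comes out as
\[
\nabla\ell^\beta_{T-t}\cdot\nabla\log\ell^\beta_{T-t}=\frac{\|\nabla\ell^\beta_{T-t}\|^2}{\ell^\beta_{T-t}}=\ell^\beta_{T-t}\,\|\nabla\mathcal{R}^\beta_{T-t}\|^2,
\]
not $\|\nabla\ell^\beta_{T-t}\|^2$. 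With this corrected drift, your It\^o step gives $\ell^{-1}\cdot\ell^{-1}\|\nabla\ell\|^2-\tfrac{1}{2}\ell^{-2}\|\nabla\ell\|^2=\tfrac{1}{2}\ell^{-2}\|\nabla\ell\|^2=\tfrac{1}{2}\|\nabla\mathcal{R}^\beta_{T-t}\|^2$, and the remainder of your argument (the handling of the $A_{T-t}$ term, the martingale part, and the localization remarks) goes through unchanged.
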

\begin{proof}
    Remember that $\mathcal{R}^\beta_{T-t}(X_{T-t})=\log\ell^\beta_{T-t}(X_{T-t})$. Applying (\ref{semimartingale decomposition of l}) and invoking the Itô formula, (\ref{semimartingale decomposition of R}) follows, and the assertion is verified.
\end{proof}
The semimartingale decomposition (\ref{semimartingale decomposition of R}) splits $(\mathcal{R}^\beta_{T-t}(X_{T-t}))_{0\leq t\leq T}$ into a sum of a local martingale and a finite variation process, adapted to the filtration $(\mathcal{G}_{T-t})_{0\leq t\leq T}$. In the following, we will verify that the local martingale part is actually a martingale. This property allows us to take $\mathbb{P}^\beta$-expectation to $\mathcal{R}^\beta$ and henceforth cancel the martingale part without employing a localization sequence of stopping times. This scheme yields an expression of the relative entropy (\ref{relative entropy}) using Fisher information (\ref{Fisher information}).\par
For the clarity of this exposition, we introduce some new notations. Denote the backward-time \textit{cumulative Fisher information process} by
\begin{equation}
    \label{cumulative Fisher information process}
    \mathcal{F}^\beta_{T-t}\coloneqq\int_0^t\big(2\beta\cdot\nabla\psi-\sum\limits_{i=1}^d\frac{\partial\beta^{(i)}}{\partial x_i}\big)(X_{T-\theta})I_{\{0\leq \theta<T-t_0\}}+\frac{1}{2}\normy{\nabla\mathcal{R}^\beta_{T-\theta}(X_{T-\theta})}^2\,d\theta,
\end{equation}
for all $0\leq t\leq T$, which is of finite variation and adapted to the filtration $(\mathcal{G}_{T-t})_{0\leq t\leq T}$. Simultaneously, we denote the backward-time local martingale by,
\begin{equation}\label{perturbed square-integrable martingale}
    \mathcal{M}^\beta_{T-t}\coloneqq\int_0^t\nabla\mathcal{R}^\beta_{T-t}(X_{T-t})\,d\overline{W}^{\mathbb{P}^\beta}_{T-t},\quad\text{for all}\quad0\leq t\leq T.
\end{equation}
Then the semimartingale decomposition of $(\mathcal{R}^\beta_{T-t})_{0\leq t\leq T}$ can be written as $\mathcal{R}^\beta_{T-t}-\mathcal{R}^\beta_{T}=\mathcal{M}^\beta_{T-t}+\mathcal{F}^\beta_{T-t}$, with $0\leq t\leq T.$ It is remarkable that in the absence of perturbation, taking $\mathbb{P}^0$-expectation to the cumulative Fisher information process (\ref{cumulative Fisher information process}) gives us the cumulative integral of the Fisher information (\ref{Fisher information}) modulo a multiplicative factor $\frac{1}{2}$, i.e.~
\begin{equation*}
    \mathbb{E}^{\mathbb{P}^0}\big[\mathcal{F}^0_{t}\big]=\frac{1}{2}\int_t^T\mathbb{E}^{\mathbb{P}^0}\big[\normy{\nabla\mathcal{R}^0_{\theta}(X_{\theta})}^2\big]\,d\theta=\frac{1}{2}\int_t^T\mathbb{I}\big[P^0_{\theta}|Q\big]\,d\theta,\quad\text{for all}\quad0\leq t\leq T.
\end{equation*}
Now, we verify that $(\mathcal{M}^\beta_{T-t})_{0\leq t\leq T}$ is an uniformly integrable martingale.
%----------------------------------
%lemma. 4.4
\begin{lemma}\label{lem: M is L2 martingale}
    The backward-time continuous local martingale $(\mathcal{M}^\beta_{T-t})_{0\leq t\leq T}$ is a square integrable martingale adapted to the filtration $(\mathcal{G}^\beta_{T-t})_{0\leq t\leq T}$.
\end{lemma}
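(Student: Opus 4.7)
The proof reduces to showing that the backward-time accumulated quadratic variation $\int_0^T \normy{\nabla\mathcal{R}^\beta_{T-\theta}(X_{T-\theta})}^2\,d\theta$ has finite $\mathbb{P}^\beta$-expectation, since a continuous local martingale whose terminal bracket is integrable is automatically an $L^2$-bounded true martingale. By Fubini--Tonelli and the substitution $t\mapsto T-\theta$, this expectation equals
\begin{equation*}
\int_0^T\mathbb{E}^{\mathbb{P}^\beta}\big[\normy{\nabla\mathcal{R}^\beta_t(X_t)}^2\big]\,dt = \int_0^T\mathbb{I}\big[P^\beta_t\big|Q\big]\,dt,
\end{equation*}
so the task is to establish Lebesgue integrability of the Fisher information over $[0,T]$.

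The strategy is to exploit the backward semimartingale decomposition from Lemma \ref{lem: sem. dec. R} under localization. Pick $(\tau_n)_n$, a localizing sequence of $(\mathcal{G}_{T-t})$-stopping times reducing $(\mathcal{M}^\beta_{T-t})_{0\leq t\leq T}$ to a uniformly integrable martingale, and, recalling that $\mathcal{M}^\beta$ vanishes at the backward initial index $t=0$, rearrange the stopped identity as
\begin{equation*}
\mathcal{F}^\beta_{T-t\wedge\tau_n} = \mathcal{R}^\beta_{T-t\wedge\tau_n}(X_{T-t\wedge\tau_n}) - \mathcal{R}^\beta_T(X_T) - \mathcal{M}^\beta_{T-t\wedge\tau_n}.
\end{equation*}
Taking $\mathbb{P}^\beta$-expectation at $t=T$ annihilates the stopped martingale. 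In the integrand of $\mathcal{F}^\beta$ in (\ref{cumulative Fisher information process}), the factor $2\beta\cdot\nabla\psi-\sum_{i}\partial\beta^{(i)}/\partial x_i$ is bounded on all of $\mathbb{R}^d$ because $\beta$ is smooth and compactly supported while $\nabla\psi$ is continuous and thus bounded on $\mathrm{supp}(\beta)$, so the first summand contributes an absolutely bounded quantity uniformly in $n$; the remaining summand $\tfrac12\normy{\nabla\mathcal{R}^\beta_{T-\theta}(X_{T-\theta})}^2$ is non-negative and non-decreasing in the upper limit.

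The main obstacle, and the technical heart of the proof, is an upper bound, uniform in $n$, on $\mathbb{E}^{\mathbb{P}^\beta}[\mathcal{R}^\beta_{T-T\wedge\tau_n}(X_{T-T\wedge\tau_n})]$. This is delicate because $Q$ is only $\sigma$-finite, so the log-density $\mathcal{R}^\beta$ can be severely negative and lacks the usual positivity estimates of relative entropy, while evaluation at a stopping time need not coincide with evaluation at a deterministic time. I would close this gap by invoking the backward-time $Q$-submartingale property of $(\mathcal{R}^\beta_{T-t})_{0\leq t\leq T}$ recalled immediately above Lemma \ref{relative entropy H is Q-submartingale}---whose optional-sampling machinery adapted to $\sigma$-finite references is developed in \cite[Corollary 1.25]{Tschiderer}---to obtain $\mathbb{E}^{\mathbb{P}^\beta}[\mathcal{R}^\beta_{T-T\wedge\tau_n}(X_{T-T\wedge\tau_n})] \leq \mathbb{H}[P_0|Q]<\infty$ in the unperturbed regime, the perturbation contributing only a bounded correction because $\beta$ is compactly supported. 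Once this uniform bound is established, the monotone convergence theorem applied to the non-negative summand of $\mathcal{F}^\beta$ as $n\to\infty$ delivers $\int_0^T\mathbb{I}[P^\beta_t|Q]\,dt<\infty$, from which the square-integrable martingale property of $(\mathcal{M}^\beta_{T-t})_{0\leq t\leq T}$ follows by standard $L^2$ martingale theory.
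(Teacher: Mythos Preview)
Your proposal is correct and follows essentially the same route as the paper: localize, take $\mathbb{P}^\beta$-expectation of the stopped backward decomposition, bound the $\beta$-term by compact support, control $\mathbb{E}^{\mathbb{P}^\beta}[\mathcal{R}^\beta_{T-\tau_n}(X_{T-\tau_n})]$ via the $Q$-submartingale optional-sampling machinery from \cite{Tschiderer}, and finish with monotone convergence. The only place the paper is more explicit is in the ``bounded correction'' step you allude to: rather than appealing directly to the unperturbed $Q$-submartingale property and patching, the paper introduces the perturbed reference measure $Q^\beta_t$ with density $\exp(-2(\psi+BI_{\{t>t_0\}}))$, applies the submartingale argument to $\mathbb{H}[P^\beta_{T-\tau_k}|Q^\beta_{T-\tau_k}]\leq\mathbb{H}[P^\beta_0|Q^\beta_0]$, and then observes that $|\mathbb{H}[\,\cdot\,|Q]-\mathbb{H}[\,\cdot\,|Q^\beta]|\leq 2\sup|B|$ since the two log-densities differ by at most $2|B|$.
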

\begin{proof}
    It is sufficient to show that $(\mathcal{M}^\beta_{T-t})_{0\leq t\leq T}$ is bounded in $L^2(\mathbb{P}^\beta)$. Since we have assumed the continuity of $t\mapsto\nabla\log\ell^\beta_t(x)$ on $[0,T]$, for any fixed $x\in\mathbb{R}^d$, and by the continuity of the sample paths of $(X_t)_{0\leq t\leq T}$, we observe that
    \begin{equation*}
        \int_0^{T-\epsilon}\normy{\nabla\log\ell^\beta_{T-\theta}(X_{T-\theta})}^2\,d\theta<\infty,\quad\mathbb{P}^\beta\text{-a.s.}\quad\text{for all}\quad0<\epsilon<T.
    \end{equation*}
    On this account, define the sequence of stopping times by
    \begin{equation*}
        \tau^\beta_k\coloneqq T\wedge\inf\big\{t\geq0:\,\int_0^t\normy{\nabla\log\ell^\beta_{T-\theta}(X_{T-\theta})}^2\,d\theta\geq k\big\},\quad\text{for all}\quad k\in\mathbb{N}.
    \end{equation*}
    Then $(\tau^\beta_k)_{k\in\mathbb{N}}$ is non-decreasing and converges to $T$, $\mathbb{P}^\beta$-a.s. Henceforth, $(\tau^\beta_k)_{k\in\mathbb{N}}$ is a localization sequence for the local martingale $(\mathcal{M}^\beta_{T-t})_{0\leq t\leq T}$. The stopped process $(\mathcal{M}^\beta_{T-\tau^\beta_k\wedge t})_{0\leq t\leq T}$ is therefore a $L^2(\mathbb{P}^\beta)$-bounded martingale adapted to $(\mathcal{G}^\beta_{T-t})_{0\leq t\leq T}$, for each $k\in\mathbb{N}$.\par
    Taking $\mathbb{P}^\beta$-expectation to the process $(\mathcal{R}^\beta_{T-t})_{0\leq t\leq T}$ at the stopping time $\tau^\beta_k$, we observe that
    \begin{equation*}
        \frac{1}{2}\mathbb{E}^{\mathbb{P}^\beta}\big[\int_0^{\tau^\beta_k}\normy{\nabla\mathcal{R}^\beta_{T-\theta}(X_{T-\theta})}^2\,d\theta\big]+\mathbb{E}^{\mathbb{P}^\beta}\big[\int_0^{\tau^\beta_k}\big(2\beta\cdot\nabla\psi-\sum\limits_{i=1}^d\frac{\partial\beta^{(i)}}{\partial x_i}\big)(X_{T-\theta})\,d\theta\big]=\mathbb{H}\big[P^\beta_{T-\tau^\beta_k}|Q\big]-\mathbb{H}\big[P^\beta_{T}|Q\big],
    \end{equation*}
    for each $k\in\mathbb{N}$.
    Since the perturbation field $\beta(\cdot)=\nabla B(\cdot)$ is of class $\mathcal{C}^\infty(\mathbb{R}^d;\mathbb{R}^d)$ with compact support,
    \begin{equation*}
        C_1\coloneqq\mathbb{E}^{\mathbb{P}^\beta}\big[\int_0^T\big|2\beta\cdot\nabla\psi-\sum\limits_{i=1}^d\frac{\partial\beta^{(i)}}{\partial x_i}\big|(X_{T-\theta})\,d\theta\big]<\infty.
    \end{equation*}
    Henceforth,
    \[
        \frac{1}{2}\mathbb{E}^{\mathbb{P}^\beta}\big[\int_0^{\tau^\beta_k}\normy{\nabla\mathcal{R}^\beta_{T-\theta}(X_{T-\theta})}^2\,d\theta\big]\leq C_1+\mathbb{H}\big[P^\beta_{T-\tau^\beta_k}|Q\big]-\mathbb{H}\big[P^\beta_{T}|Q\big].
    \]\par
    Only in this proof, for all $0\leq t\leq T$, we denote by $Q^\beta_t$ the $\sigma$-finite Borel measure on $\mathbb{R}^d$ with density $\exp(-2(\psi+BI_{\{t>t_0\}})(\cdot))$ against the Lebesgue measure on $\mathbb{R}^d$. A variant argument to Lemma \ref{relative entropy H is Q-submartingale} implies,
    \begin{equation*}
        \mathbb{H}\big[P^\beta_{T-\tau^\beta_k}|Q^\beta_{T-\tau^\beta_k}\big]\leq\mathbb{H}\big[P^\beta_{0}|Q^\beta_0\big]\quad\text{for all}\quad k\in\mathbb{N}.
    \end{equation*}
    Now that we have the $\mathbb{P}^\beta$-a.s. boundedness of $k\mapsto\mathbb{H}[P^\beta_{T-\tau^\beta_k}|Q^\beta_{T-\tau^\beta_k}]$. To proceed with an estimate of the terms $\mathbb{H}[P^\beta_{T-\tau^\beta_k}|Q]$ and $\mathbb{H}[P^\beta_{T}|Q]$, we observe that
    \begin{equation*}
        C_2\coloneqq2\max\big\{\abs{B(x)}:\,x\in\mathbb{R}^d\big\}<\infty.
    \end{equation*}
    It is immediate that $\mathbb{H}[P^\beta_{0}|Q^\beta_{0}]\leq\mathbb{H}[P_0|Q]+C_2$ and 
    \begin{equation*}
        \mathbb{H}\big[P^\beta_{T-\tau^\beta_k}|Q\big]-C_2\leq\mathbb{H}\big[P^\beta_{T-\tau^\beta_k}|Q^\beta_{T-\tau^\beta_k}\big]\leq\mathbb{H}\big[P^\beta_{T-\tau^\beta_k}|Q\big]+C_2,\quad\text{for each}\quad k\in\mathbb{N}.
    \end{equation*}
    In consequence,
    \begin{equation*}
        \mathbb{H}\big[P^\beta_{T-\tau^\beta_k}|Q\big]\leq\mathbb{H}\big[P^\beta_{T-\tau^\beta_k}|Q^\beta_{T-\tau^\beta_k}\big]+C_2\leq\mathbb{H}\big[P^\beta_{0}|Q^\beta_{0}\big]+C_2 \leq\mathbb{H}\big[P_0|Q\big]+2C_2\quad\text{for each}\quad k\in\mathbb{N}
    \end{equation*}
    as well as 
    \begin{equation*}
        \frac{1}{2}\mathbb{E}^{\mathbb{P}^\beta}\big[\int_0^{\tau^\beta_k}\normy{\nabla\mathcal{R}^\beta_{T-\theta}(X_{T-\theta})}^2\,d\theta\big]\leq C_1+\mathbb{H}\big[P_0|Q\big]+2C_2-\mathbb{H}\big[P^\beta_T|Q\big]<\infty
    \end{equation*}
    for all $k\in\mathbb{N}$. Since $\tau^\beta_k\to T$ as $k\to\infty$ $\mathbb{P}^\beta$-a.s., the monotone convergence theorem yields
    \begin{equation}\label{monotone convergence}
        \mathbb{E}^{\mathbb{P}^\beta}\big[\big\langle\mathcal{M}^\beta\big\rangle_0\big]=\mathbb{E}^{\mathbb{P}^\beta}\big[\int_0^T\normy{\nabla\mathcal{R}^\beta_{T-\theta}(X_{T-\theta})}^2\,d\theta\big]\leq2\big(C_1+\mathbb{H}\big[P_0|Q\big]+2C_2-\mathbb{H}\big[P^\beta_T|Q\big]\big)<\infty.
    \end{equation}
    Henceforth, the continuous local martingale $(\mathcal{M}^\beta_{T-t})_{0\leq t\leq T}$ is a $(\mathcal{G}^\beta_{T-t})_{0\leq t\leq T}$-martingale bounded in $L^2(\mathbb{P}^\beta)$. And the assertion is verified.
\end{proof}
The martingale property of $(\mathcal{M}^\beta_{T-t})_{0\leq t\leq T}$ from the semimartingale decomposition of $(\mathcal{R}^\beta_{T-t})_{0\leq t\leq T}$ allows us to transfer our conclusions of the pathwise behavior of $(\mathcal{R}^\beta_{T-t})_{0\leq t\leq T}$ to the quantity $\mathbb{H}^\beta[P^\beta_{T-t}|Q]$ with $0\leq t\leq T$ by taking $\mathbb{P}^\beta$-expectation, without any localization sequence of stopping times. In fact, Lemma \ref{lem: M is L2 martingale} guarantees that our trajectorial formulation can retrieve the known results on relative entropy dissipation.

%--------------------------------
%--------------------------------

\section{Applications to relative entropy dissipation}\label{sec: trajectorial dissipation}
This section contains the main results of this expository article, namely, the trajectorial formulation of the relative entropy dissipation. Based on the preliminaries in Sections \ref{sec: time-reversal} and \ref{sec: semimartingale}, our results describe some remarkable features, for instance Theorems \ref{perturbed trajectorial entropy decay, time-displacement} and \ref{perturbed trajectorial entropy decay, derivative1}, of the pathwise behavior of $\mathcal{R}^\beta$ under time-reversal. The reason why we look at things backward in time is explained at the end of this section, where the less transparent forward-time approach is compared to our derivations in Section \ref{sec: semimartingale}.\par
The trajectorial approach is an advancement towards understanding the random fluctuations of the relative entropy of a complex system \cite{Breunung/Kogelbauer/Haller, Thurner/Corominas-Murtra/Hanel}. This approach reveals more information from the Itô-Langevin stochastic system than the known classical results. Indeed, taking $\mathbb{P}^\beta$-expectation retrieves the dynamics of relative entropy. Let us now focus on this trajectorial interpretation. 
\subsection{Time-displacement and derivative of $\mathcal{R}^\beta$}
The trajectorial interpretation to the dissipation of relative entropy is referred to Theorems \ref{thm: Trajectorial rate of relative entropy dissipation, time-displacement, perturbed} and \ref{thm: Trajectorial rate of relative entropy dissipation, derivative} when the Itô-Langevin dynamics is placed under a perturbation $\beta(\cdot):\mathbb{R}^d\to\mathbb{R}^d$ initiated at $t_0\geq0$. Corollaries \ref{cor: 5.4} and \ref{cor: Trajectorial rate of relative entropy dissipation, unperturbed} record the scenario in the absence of perturbation. As a prelude, these corollaries will be of independent interest for the understanding of the so-called \textit{steepest descent property}, to be discussed in Section \ref{sec: wasserstein}. Our first step is an argument on the regularity control of the ratio between $\ell^\beta$ and $\ell^0$.
%lemma 5.1
\begin{lemma}\label{lem: 5.1}
    Fix the time interval $[0,T]$ with $T>t_0$. There exist $C_1,C_2>0$ such that
    \begin{equation}\label{bdd, l ratio}
        C_1\leq\frac{\ell^\beta_t(x)}{\ell^0_t(x)}=\frac{p_t^\beta(x)}{p^0_t(x)}\leq C_2,\quad\text{for all}\quad(t,x)\in[0,T]\times\mathbb{R}^d.
    \end{equation}
\end{lemma}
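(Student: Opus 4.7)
The plan is to establish the equality $\ell^\beta_t/\ell^0_t = p^\beta_t/p^0_t$ trivially from the definition $\ell^\beta_t(x) = p^\beta_t(x)e^{2\psi(x)}$ (the exponential factor cancels), and to prove the nontrivial bounds by using Girsanov's theorem to relate the path-space laws $\mathbb{P}^\beta$ and $\mathbb{P}^0$, then exploiting the gradient structure $\beta = \nabla B$ to turn the resulting Radon--Nikodym density into a pointwise-bounded expression, and finally passing to the marginal densities through a conditional-expectation identity. Because the perturbation is inactive on $[0,t_0]$, the two SDEs coincide there and the ratio is identically $1$; only the interval $(t_0,T]$ needs attention.

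On $(t_0,T]$, since $\beta$ is bounded (smooth with compact support), Novikov's condition is automatic, and Girsanov yields $\mathbb{P}^\beta \ll \mathbb{P}^0$ on $\mathcal{F}_T$ with density
\[
Z_T = \exp\bigg(-\int_{t_0}^T \beta(X_s)\cdot dW^0_s - \tfrac{1}{2}\int_{t_0}^T \|\beta(X_s)\|^2\,ds\bigg),
\]
where $W^0$ denotes the $\mathbb{P}^0$-driving Brownian motion. As $Z$ is a genuine martingale and $\{X_t \in A\} \in \mathcal{F}_t$, one gets $P^\beta_t(A) = \mathbb{E}^{\mathbb{P}^0}[Z_t \mathbf{1}_{\{X_t \in A\}}]$, hence the key pointwise identity
\[
\frac{p^\beta_t(x)}{p^0_t(x)} = \mathbb{E}^{\mathbb{P}^0}\big[Z_t\,\big|\,X_t = x\big]\qquad\text{for a.e.\ }x \in \mathbb{R}^d.
\]

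The core step is to apply Itô's formula to $B(X_s)$ under $\mathbb{P}^0$ (where $dX_s = -\nabla\psi(X_s)\,ds + dW^0_s$) in order to eliminate the stochastic integral; using $\beta = \nabla B$,
\[
\int_{t_0}^t \nabla B(X_s) \cdot dW^0_s = B(X_t) - B(X_{t_0}) + \int_{t_0}^t \Big[\nabla B \cdot \nabla\psi - \tfrac{1}{2}\Delta B\Big](X_s)\,ds,
\]
so that
\[
Z_t = \exp\bigg(B(X_{t_0}) - B(X_t) - \int_{t_0}^t \Big[\nabla B \cdot \nabla\psi - \tfrac{1}{2}\Delta B + \tfrac{1}{2}\|\nabla B\|^2\Big](X_s)\,ds\bigg).
\]
Since $B \in \mathcal{C}^\infty_c(\mathbb{R}^d)$, the functions $B$, $\nabla B$, $\Delta B$ are uniformly bounded, and the only term involving the a priori unbounded $\nabla\psi$ is $\nabla B \cdot \nabla\psi$, which is bounded because it is supported in the compact set $\operatorname{supp}(\nabla B)$ where $\nabla\psi$ is continuous. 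Hence the exponent is bounded pathwise by some $M = M(T, B, \psi) < \infty$, so $e^{-M} \leq Z_t \leq e^{M}$ uniformly in $\omega$ and $t \in [t_0, T]$. Conditional expectation preserves these pointwise bounds, and parabolic regularity of $(t,x) \mapsto p^0_t(x)$ (smoothness and strict positivity for $t > 0$) extends the a.e.\ bound to all $x \in \mathbb{R}^d$, yielding (\ref{bdd, l ratio}).

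The main obstacle is that the Girsanov exponent contains the stochastic integral $\int \beta \cdot dW^0$, which is only an $L^2$-random variable and therefore does not by itself provide a pointwise bound on $Z_t$. This is precisely where the hypothesis that $\beta$ is of gradient type becomes essential: Itô's formula applied to the potential $B$ converts the stochastic integral into a difference of boundary values plus a Lebesgue integral with bounded integrand, eliminating all $dW$-terms and giving the sought-for uniform control on $Z_t$; without $\beta = \nabla B$ one could only hope for $L^p$-bounds on $Z_t$, from which the pointwise conclusion of the lemma would not follow.
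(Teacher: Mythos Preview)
Your proof is correct and follows essentially the same approach as the paper: Girsanov gives the path-space density $Z_t$, the conditional-expectation identity transfers this to the marginal ratio $p^\beta_t/p^0_t$, and It\^o's formula applied to the compactly supported potential $B$ converts the stochastic integral into a pathwise-bounded expression. Your write-up is in fact slightly more careful than the paper's in noting Novikov's condition, handling the trivial $[0,t_0]$ interval, and invoking continuity of the densities to pass from an a.e.\ bound to a pointwise one.
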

\begin{proof}
    In the Itô-Langevin dynamics (\ref{perturbed Itô-Langevin dynamics}), the forward-time coordinate process is denoted by $(X_t)_{0\leq t\leq T}$. We have also denoted by $(W^\beta_t)_{0\leq t\leq T}$ the $d$-dimensional $(\mathcal{F}_t)_{0\leq t\leq T}$-Brownian motion under $\mathbb{P}^\beta$, and by $(W^0_t)_{0\leq t\leq T}$ the Brownian motion under $\mathbb{P}^0$. Hence,
    \begin{equation*}
        W^0_t-W^0_{t_0}=W^\beta_t-W^\beta_{t_0}-\int_{t_0}^t\beta(X_\theta)\,d\theta,\quad\text{for all}\quad t_0\leq t\leq T.
    \end{equation*}
    By the Girsanov theorem \cite[Proposition 5.21]{Le Gall}, the density between $\mathbb{P}^\beta$ and $\mathbb{P}^0$ amounts to,
    \begin{equation}
        \label{Z beta: control 1}
        Z^\beta_t\coloneqq\frac{\mathbb{P}^\beta}{\mathbb{P}^0}\bigg|_{\mathcal{F}_t}=\exp\big(-\int_{t_0}^t\beta(X_\theta)\,dW^0_\theta-\frac{1}{2}\int_{t_0}^t\normy{\beta(X_\theta)}^2\,d\theta\big),\quad\text{for all}\quad t_0\leq t\leq T.
    \end{equation}
    Notice that for each $(t,x)\in[t_0,T]\times\mathbb{R}^d$, the ratio $\ell^\beta_t(x)/\ell^0_t(x)=p^\beta_t(x)/p^0_t(x)$ is equal to $Z^\beta_t$, under the condition $X_t=x$, i.e.
    \begin{equation*}
        \frac{\ell^\beta_t(x)}{\ell^0_t(x)}=\mathbb{E}^{\mathbb{P}^0}\big[Z^\beta_t|X_t=x\big],\quad\text{for all}\quad(t,x)\in[0,T]\times\mathbb{R}^d.
    \end{equation*}
    Therefore, if we manage to uniformly bound the logarithm $(\log Z^\beta_t)_{0\leq t\leq T}$, then the uniform boundedness of $|\ell^\beta_t(x)/\ell^0_t(x)|$ follows. Since the perturbation $\beta(\cdot):\mathbb{R}^d\to\mathbb{R}^d$ is smooth with compact support,
    \begin{equation*}
        \frac{1}{2}\int_{t_0}^t\normy{\beta(X_\theta)}^2\,d\theta\leq C^\prime,\quad\text{for all}\quad t_0\leq t\leq T,\quad\mathbb{P}^\beta\text{-a.s.}
    \end{equation*}
    for some constant $C^\prime>0$. Since $\beta(\cdot)$ is of gradient type, i.e.~$\beta(\cdot)=\nabla B(\cdot)$ with $B(\cdot)$ of class $\mathcal{C}^\infty(\mathbb{R}^d;\mathbb{R})$ and compactly supported, then the Itô formula gives
    \begin{equation}
        \label{Z beta: control 2}
        \int_{t_0}^t\beta(X_\theta)\,dW^0_\theta=B(X_t)-B(X_{t_0})+\frac{1}{2}\int_{t_0}^t\big(2\beta\cdot\nabla\psi-\sum\limits_{i=1}^d\frac{\partial\beta^{(i)}}{\partial x_i}\big)(X_\theta)\,d\theta,\quad\text{for all}\quad t_0\leq t\leq T.
    \end{equation}
    Invoking the compact supportness of $B(\cdot)$ again, it is then obvious that
    \begin{equation*}
        \big|\int_{t_0}^t\beta(X_\theta)\,dW^0_\theta\big|\leq C^{\prime\prime},\quad\text{for all}\quad t_0\leq t\leq T,\quad\mathbb{P}^\beta\text{-a.s.}
    \end{equation*}
    for some constant $C^{\prime\prime}>0$. And this implies that $|\log Z^\beta_t|\leq C^\prime+C^{\prime\prime}$ for all $t_0\leq t\leq T$ $\mathbb{P}^\beta$-a.s., whence the assertion is verified.
\end{proof}
%-------------------------------
The framework of our discussion on the pathwise behavior of $\mathcal{R}^\beta$ is based on a time-reversal perspective. The following Theorems \ref{thm: Trajectorial rate of relative entropy dissipation, time-displacement, perturbed} and \ref{thm: Trajectorial rate of relative entropy dissipation, derivative} present the displacement and time-derivative of $\mathcal{R}^\beta$ backward in time. Lemma \ref{lem: 5.1} provides quantitative control on the deviation effect of the perturbation $\beta(\cdot)$ based on its smoothness and compact support, which is necessary to our further derivation on the trajectorial dynamics of the relative entropy process $\mathcal{R}^\beta$. 
%---------------------------------
%theorem 5.2
\begin{theorem}\label{thm: Trajectorial rate of relative entropy dissipation, time-displacement, perturbed}
    Fix the time interval $[0,T]$ with $T>t_0$. The time-reversal of the relative entropy process $\mathcal{R}^\beta$ satisfies, for all $0\leq t< T-t_0$, the following $\mathbb{P}^\beta$-a.s. trajectorial relation,
    \begin{equation}\begin{aligned}\label{perturbed trajectorial entropy decay, time-displacement}
        \mathbb{E}^{\mathbb{P}^\beta}\big[\mathcal{R}^\beta_{t_0}(X_{t_0})|\mathcal{G}_{T-t}\big]-\mathcal{R}^\beta_{T-t}
        (X_{T-t})&=\mathbb{E}^{\mathbb{P}^\beta}\big[\int_t^{T-t_0}\big(2\beta\cdot\nabla\psi-\sum\limits_{i=1}^d\frac{\partial\beta^{(i)}}{\partial x_i}\big)(X_{T-\theta}) \,d\theta|\mathcal{G}_{T-t}\big]\\
        &\quad+\frac{1}{2}\mathbb{E}^{\mathbb{P}^\beta}\big[\int_t^{T-t_0}\normy{\nabla\mathcal{R}^\beta_{T-\theta}(X_{T-\theta})}^2 \,d\theta|\mathcal{G}_{T-t}\big].
    \end{aligned}\end{equation}
\end{theorem}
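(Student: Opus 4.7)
The plan is to integrate the backward-time semimartingale decomposition of $(\mathcal{R}^\beta_{T-s}(X_{T-s}))_{0\leq s\leq T}$ from Lemma \ref{lem: sem. dec. R} between the backward-time indices $s=t$ and $s=T-t_0$, then apply the conditional expectation $\mathbb{E}^{\mathbb{P}^\beta}[\,\cdot\,|\mathcal{G}_{T-t}]$, and exploit the martingale property of $(\mathcal{M}^\beta_{T-s})_{0\leq s\leq T}$ established in Lemma \ref{lem: M is L2 martingale} to annihilate the stochastic-integral increment.

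Concretely, the first step is to rewrite (\ref{semimartingale decomposition of R}) in integrated form as
\begin{equation*}
    \mathcal{R}^\beta_{t_0}(X_{t_0}) - \mathcal{R}^\beta_{T-t}(X_{T-t}) = \bigl(\mathcal{F}^\beta_{t_0} - \mathcal{F}^\beta_{T-t}\bigr) + \bigl(\mathcal{M}^\beta_{t_0} - \mathcal{M}^\beta_{T-t}\bigr),
\end{equation*}
valid $\mathbb{P}^\beta$-a.s.\ for $0\leq t<T-t_0$. Unfolding the finite-variation part via the definition (\ref{cumulative Fisher information process}), and noting that on the range $[t,T-t_0)$ the indicator $I_{\{0\leq\theta<T-t_0\}}$ is identically one, one obtains
\begin{equation*}
    \mathcal{F}^\beta_{t_0} - \mathcal{F}^\beta_{T-t} = \int_t^{T-t_0}\Bigl[\bigl(2\beta\cdot\nabla\psi - \sum_{i=1}^d \partial\beta^{(i)}/\partial x_i\bigr)(X_{T-\theta}) + \tfrac{1}{2}\normy{\nabla\mathcal{R}^\beta_{T-\theta}(X_{T-\theta})}^2\Bigr]\,d\theta,
\end{equation*}
which matches exactly the two integrals appearing on the right-hand side of (\ref{perturbed trajectorial entropy decay, time-displacement}) before conditioning.

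The final step is to apply $\mathbb{E}^{\mathbb{P}^\beta}[\,\cdot\,|\mathcal{G}_{T-t}]$ to both sides: the left-hand side becomes $\mathbb{E}^{\mathbb{P}^\beta}[\mathcal{R}^\beta_{t_0}(X_{t_0})|\mathcal{G}_{T-t}] - \mathcal{R}^\beta_{T-t}(X_{T-t})$ because $\mathcal{R}^\beta_{T-t}(X_{T-t})$ is $\mathcal{G}_{T-t}$-measurable, while the martingale increment $\mathcal{M}^\beta_{t_0}-\mathcal{M}^\beta_{T-t}$ vanishes in conditional expectation by Lemma \ref{lem: M is L2 martingale}. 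The main, rather mild, obstacle is verifying the integrability required for these manipulations; however, this is already guaranteed by the $L^2(\mathbb{P}^\beta)$-boundedness of $\mathcal{M}^\beta$ together with the uniform boundedness of $2\beta\cdot\nabla\psi-\sum_i\partial\beta^{(i)}/\partial x_i$ inherited from the smoothness and compact support of $\beta(\cdot)$, so no further localization argument is needed beyond what Section \ref{sec: semimartingale} has already provided.
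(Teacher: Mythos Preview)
Your proposal is correct and follows essentially the same approach as the paper's proof: integrate the backward semimartingale decomposition of Lemma~\ref{lem: sem. dec. R}, then take $\mathcal{G}_{T-t}$-conditional expectation and use Lemma~\ref{lem: M is L2 martingale} to eliminate the martingale increment. Your write-up is in fact more detailed than the paper's two-line argument, spelling out the integrated form, the role of the indicator on $[t,T-t_0)$, and the source of the needed integrability.
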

\begin{proof}
    Applying Lemma (\ref{lem: M is L2 martingale}), the local martingale part from the semimartingale decomposition (\ref{semimartingale decomposition of R}) of $\mathcal{R}^\beta$ is a square integrable martingale. Hence, taking $\mathcal{G}_{T-t}$-conditional expectation with respect to $\mathbb{P}^\beta$ on (\ref{semimartingale decomposition of R}) cancels this martingale part. And the assertion follows.
\end{proof}
The perturbation terms in (\ref{perturbed trajectorial entropy decay, time-displacement}) clouds the implication of the phrase \textit{dissipation}. Nonetheless, this term indicates how the perturbation $\beta(\cdot)$ entangles with the potential $\psi(\cdot)$, and henceforth affects the Itô-Langevin stochastic dynamics (\ref{perturbed Itô-Langevin dynamics}). But if we collapse the perturbation $\beta(\cdot)$, things become more transparent.
%---------------------------------
%Corollary 5.3
\begin{corollary}\label{cor: 5.4}
    Switching off the perturbation $\beta(\cdot):\mathbb{R}^d\to\mathbb{R}^d$, for all $0\leq t< T-t_0$, Theorem \ref{thm: Trajectorial rate of relative entropy dissipation, time-displacement, perturbed} reduces to the $\mathbb{P}^0$-a.s. trajectorial displacement of relative entropy dissipation,
    \begin{equation}\label{unperturbed trajectorial entropy decay, time-displacement}
        \mathbb{E}^{\mathbb{P}^0}\big[\mathcal{R}^0_{t_0}(X_{t_0})|\mathcal{G}_{T-t}\big]-\mathcal{R}^0_{T-t}
        (X_{T-t})=\frac{1}{2}\mathbb{E}^{\mathbb{P}^0}\big[\int_t^{T-t_0}\normy{\nabla\mathcal{R}^0_{T-\theta}(X_{T-\theta})}^2 \,d\theta|\mathcal{G}_{T-t}\big].
    \end{equation}
\end{corollary}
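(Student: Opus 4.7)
The plan is to specialize Theorem \ref{thm: Trajectorial rate of relative entropy dissipation, time-displacement, perturbed} to the case of vanishing perturbation. First I would verify that $\beta(\cdot)\equiv 0$ is an admissible field under the standing hypotheses: it is smooth, compactly supported, and of gradient type with perturbation potential $B(\cdot)\equiv 0$. With this choice, the path-space law $\mathbb{P}^\beta$ coincides with $\mathbb{P}^0$, the Itô--Langevin equation (\ref{perturbed Itô-Langevin dynamics}) loses its perturbation drift for every $t\geq 0$ regardless of $t_0$, and the marginals $(P^\beta_t)_{t\geq 0}$ collapse to $(P^0_t)_{t\geq 0}$; consequently the semimartingale decompositions in Lemmas \ref{lem: sem. dec. l} and \ref{lem: sem. dec. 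R} specialize without further work, and the $L^2$-martingale property established in Lemma \ref{lem: M is L2 martingale} is inherited at $\beta\equiv 0$.

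Next I would read off the simplification of (\ref{perturbed trajectorial entropy decay, time-displacement}). The first conditional expectation on its right-hand side has integrand $(2\beta\cdot\nabla\psi-\sum_{i}\partial\beta^{(i)}/\partial x_i)(X_{T-\theta})$, which vanishes pointwise when $\beta\equiv 0$, so that term drops out. What remains is exactly $\frac{1}{2}\mathbb{E}^{\mathbb{P}^0}[\int_t^{T-t_0}\normy{\nabla\mathcal{R}^0_{T-\theta}(X_{T-\theta})}^2\,d\theta|\mathcal{G}_{T-t}]$, yielding (\ref{unperturbed trajectorial entropy decay, time-displacement}).

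There is no substantive obstacle; the corollary is a genuine specialization rather than an independent result. The only thing worth double-checking is that the statement accommodates any $t_0\geq 0$ in the unperturbed setting, which it does, since the upper limit $T-t_0$ and the conditioning at $\mathcal{G}_{T-t}$ with $0\leq t<T-t_0$ are consistent regardless of whether a perturbation is actually present. The clean form displayed in (\ref{unperturbed trajectorial entropy decay, time-displacement}) then serves as the natural point of departure for identifying $\frac{1}{2}\normy{\nabla\mathcal{R}^0_{T-\theta}(X_{T-\theta})}^2$ with the pathwise rate of dissipation and, after taking $\mathbb{P}^0$-expectation, with the Fisher information integrand featured in the classical de Bruijn identity previewed in Section \ref{sec: stochastic dynamics}.
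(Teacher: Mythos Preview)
Your proposal is correct and matches the paper's approach exactly: the corollary is stated without proof in the paper, as an immediate specialization of Theorem~\ref{thm: Trajectorial rate of relative entropy dissipation, time-displacement, perturbed} obtained by setting $\beta\equiv 0$, which kills the first conditional expectation on the right-hand side of (\ref{perturbed trajectorial entropy decay, time-displacement}). Your additional remarks on admissibility of $\beta\equiv 0$ and the inheritance of Lemmas~\ref{lem: sem. dec. l}, \ref{lem: sem. dec. R}, and \ref{lem: M is L2 martingale} are more explicit than the paper bothers to be, but entirely in the same spirit.
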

Remember that he backward cumulative Fisher information process is defined in (\ref{cumulative Fisher information process}), which is exactly the integrand in (\ref{unperturbed trajectorial entropy decay, time-displacement}). If we present things forward in time, for instance replacing $T-t$ by $T-(T-t)$, then we observe that the relative entropy process $(\mathcal{R}^\beta_t(X_t))_{t\geq0}$ is monotonically decreasing along all of its trajectories $\mathbb{P}^0$-a.s., conforming to the phrase \textit{dissipation}.\par
On the other hand, the Fisher information quantity (\ref{Fisher information}) appears again if we take the time-derivative of the relative entropy displacement (\ref{perturbed trajectorial entropy decay, time-displacement}). This can be seen as the trajectorial rate of time-evolution of $\mathcal{R}^\beta$. However, things become delicate if we explore more on such limiting trajectorial behavior.\par
Lemma $\ref{lem: 5.1}$ provides sufficient regularity to allow us taking the limit $t\nearrow T-t_0$ in (\ref{perturbed trajectorial entropy decay, time-displacement}) divided by $T-t_0-t$, which gives us the time-derivative of backward process $\mathcal{R}^\beta$ under time-reversal. This differential structure of the trajectorial relative entropy process will eventually shed light on the know classical results on entropy dissipation which have been displayed in Section \ref{sec: stochastic dynamics}.
%---------------------------------
%theorem 5.4
\begin{theorem}\label{thm: Trajectorial rate of relative entropy dissipation, derivative}
    Fix the time interval $[0,T]$ with $T>t_0$. The backward relative entropy process $\mathcal{R}^\beta$ satisfies the limiting trajectorial identity,
    \begin{equation}
        \label{perturbed trajectorial entropy decay, derivative1}
        \lim\limits_{t\nearrow T-t_0}\frac{1}{T-t_0-t}\bigg(\mathbb{E}^{\mathbb{P}^\beta}\big[\mathcal{R}^\beta_{t_0}(X_{t_0})|\mathcal{G}_{T-t}\big]-\mathcal{R}^\beta_{T-t}
        (X_{T-t})\bigg)=\frac{1}{2}\normy{\nabla\mathcal{R}^0_{t_0}(X_{t_0})}^2+\big(2\beta\cdot\nabla\psi-\sum\limits_{i=1}^d\frac{\partial\beta^{(i)}}{\partial x_i}\big)(X_{t_0}),
    \end{equation}
    where the limit in (\ref{perturbed trajectorial entropy decay, derivative1}) is taken in $L^1(\mathbb{P}^\beta)$.
\end{theorem}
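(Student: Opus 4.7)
The plan is to begin from the time-displacement identity $(\ref{perturbed trajectorial entropy decay, time-displacement})$ of Theorem \ref{thm: Trajectorial rate of relative entropy dissipation, time-displacement, perturbed}, divide both sides by $T-t_0-t$, and pass to the limit $t\nearrow T-t_0$. Writing the backward-time integrand as
\begin{equation*}
F(\theta) := \bigg(2\beta\cdot\nabla\psi-\sum_{i=1}^d \frac{\partial\beta^{(i)}}{\partial x_i}\bigg)(X_{T-\theta})+\frac{1}{2}\normy{\nabla\mathcal{R}^\beta_{T-\theta}(X_{T-\theta})}^2,
\end{equation*}
the task reduces to proving that $(T-t_0-t)^{-1}\mathbb{E}^{\mathbb{P}^\beta}[\int_t^{T-t_0}F(\theta)\,d\theta\,|\,\mathcal{G}_{T-t}]\to F(T-t_0)$ in $L^1(\mathbb{P}^\beta)$ as $t\nearrow T-t_0$.

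I would proceed via two complementary convergences. First, by the pathwise continuity of $(X_t)_{0\leq t\leq T}$ together with the smoothness of $(t,x)\mapsto\nabla\mathcal{R}^\beta_t(x)$, the trajectory $\theta\mapsto F(\theta)$ is $\mathbb{P}^\beta$-a.s.~continuous at $\theta=T-t_0$, so Lebesgue's differentiation theorem gives $(T-t_0-t)^{-1}\int_t^{T-t_0}F(\theta)\,d\theta\to F(T-t_0)$ pointwise $\mathbb{P}^\beta$-a.s. Second, the backward filtration satisfies $\mathcal{G}_{T-t}\nearrow\mathcal{G}_{t_0}$ as $t\nearrow T-t_0$, and since $F(T-t_0)$ is $\mathcal{G}_{t_0}$-measurable, the backward martingale convergence theorem yields $\mathbb{E}^{\mathbb{P}^\beta}[F(T-t_0)\,|\,\mathcal{G}_{T-t}]\to F(T-t_0)$ in $L^1(\mathbb{P}^\beta)$. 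An $L^1$-contractivity argument for conditional expectation, combined with a dominated convergence step that upgrades the first convergence from pointwise to $L^1$, then closes the passage to the limit.

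To identify the limit with the right-hand side of $(\ref{perturbed trajectorial entropy decay, derivative1})$, note that the Itô--Langevin dynamics (\ref{perturbed Itô-Langevin dynamics}) coincides with its unperturbed counterpart throughout $[0,t_0]$, whence $P^\beta_t=P^0_t$ for all $t\in[0,t_0]$ and in particular $\nabla\mathcal{R}^\beta_{t_0}(X_{t_0})=\nabla\mathcal{R}^0_{t_0}(X_{t_0})$. Substituting this into $F(T-t_0)$ produces precisely the asserted right-hand side.

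The main obstacle is the $L^1(\mathbb{P}^\beta)$ upgrade of the pointwise convergence, not the identification of the limit itself. The perturbation summand is benign since $\beta=\nabla B$ is smooth and compactly supported while $\nabla\psi$ is locally Lipschitz, so $|2\beta\cdot\nabla\psi-\sum_{i=1}^d\partial\beta^{(i)}/\partial x_i|(X_{T-\theta})$ is uniformly bounded along the trajectory. The delicate part is the gradient-squared term, for which Lemma \ref{lem: M is L2 martingale} alone only supplies $L^1(dt\otimes d\mathbb{P}^\beta)$-integrability over $[0,T]$, not the uniform integrability of the conditional time-averages needed near $\theta=T-t_0$. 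Here Lemma \ref{lem: 5.1} becomes decisive: its uniform control of $\ell^\beta/\ell^0$ in a neighbourhood of $t_0$ enables one to transfer regularity of $\nabla\log p^0$ to $\nabla\log p^\beta$, furnishing the domination that closes the $L^1$-convergence argument.
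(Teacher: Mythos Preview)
Your proposal is essentially the paper's own argument. Both start from the displacement identity of Theorem~\ref{thm: Trajectorial rate of relative entropy dissipation, time-displacement, perturbed}, establish almost-sure convergence of the time-averaged integrand to $F(T-t_0)$ via continuity of paths, upgrade this to $L^1(\mathbb{P}^\beta)$, and then handle the conditional expectation by backward-martingale convergence combined with the right-continuity of $(\mathcal{G}_{T-t})_{0\leq t\leq T}$; both invoke Lemma~\ref{lem: 5.1} and the identification $\nabla\mathcal{R}^\beta_{t_0}=\nabla\mathcal{R}^0_{t_0}$ coming from $P^\beta_{t_0}=P^0_{t_0}$.

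The one tactical difference worth flagging: where you reach for a direct dominated-convergence argument and correctly identify the domination of the gradient-squared term as the ``main obstacle,'' the paper instead applies Scheff\'e's lemma---proving separately the almost-sure convergence and the convergence of the $L^1(\mathbb{P}^\beta)$ norms---which sidesteps the need to exhibit an explicit dominating function. This is precisely the obstacle you isolate, and Scheff\'e is the cleaner way around it. The paper's final step also splits the triangle-inequality bound into three named pieces $\mathscr{A}^\beta_{t_0,T-t},\mathscr{B}^\beta_{t_0,T-t},\mathscr{C}^\beta_{t_0,T-t}$ (the $L^1$ upgrade, plus one backward-martingale term for each summand of $F(T-t_0)$), but this is organisational rather than substantive and matches your $L^1$-contractivity sketch.
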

\begin{proof}
    Viewing Theorem \ref{thm: Trajectorial rate of relative entropy dissipation, time-displacement, perturbed} and invoking the dominated convergence theorem, we deduce from the smoothness and compact supportness of $\beta(\cdot)$, together with the uniform boundedness of Lemma \ref{lem: 5.1}, that
    \begin{equation*}
        \lim\limits_{t\nearrow T-t_0}\frac{1}{T-t_0-t}\mathbb{E}^{\mathbb{P}^\beta}\big[\big|\mathcal{R}^\beta_{T-t}(X_{T-t})-\mathcal{R}^\beta_{t_0}(X_{t_0})\big|\big]=\mathbb{E}^{\mathbb{P}^\beta}\big[\big|\frac{1}{2}\normy{\nabla\mathcal{R}^0_{t_0}(X_{t_0})}^2+\big(2\beta\cdot\nabla\psi-\sum\limits_{i=1}^d\frac{\partial\beta^{(i)}}{\partial x_i}\big)(X_{t_0})\big|\big].
    \end{equation*}
    The fundamental theorem of calculus for continuous functions implies that
    \begin{equation*}
        \lim\limits_{t\nearrow T-t_0}\frac{1}{T-t_0-t}\big(\mathcal{R}^\beta_{T-t}(X_{T-t})-\mathcal{R}^\beta_{t_0}(X_{t_0})\big)=\frac{1}{2}\normy{\nabla\mathcal{R}^0_{t_0}(X_{t_0})}^2+\big(2\beta\cdot\nabla\psi-\sum\limits_{i=1}^d\frac{\partial\beta^{(i)}}{\partial x_i}\big)(X_{t_0}),\quad\mathbb{P}^\beta\text{-a.s.}
    \end{equation*}
   The Scheffé lemma \cite[Section 5.10]{Williams} says that if a sequence of integrable random variables converges to a limiting random variable almost surely, then the convergence in $L^1$ is equivalent to the convergence of their $L^1$ norms. Using the Scheffé lemma,
   \begin{equation}\label{Scheffé}
       \lim\limits_{t\nearrow T-t_0}\mathbb{E}^{\mathbb{P}^\beta}\bigg[\bigg|\frac{\mathcal{R}^\beta_{T-t}(X_{T-t})-\mathcal{R}^\beta_{t_0}(X_{t_0})}{T-t_0-t}-\frac{1}{2}\normy{\nabla\mathcal{R}^0_{t_0}(X_{t_0})}^2-\big(2\beta\cdot\nabla\psi-\sum\limits_{i=1}^d\frac{\partial\beta^{(i)}}{\partial x_i}\big)(X_{t_0})\bigg|\bigg]=0.
   \end{equation}
   Moreover, the triangle inequality and the Jensen inequality for conditional expectation yields 
   \begin{equation*}\begin{aligned}
       &\mathbb{E}^{\mathbb{P}^\beta}\bigg[\bigg|\frac{\mathbb{E}^{\mathbb{P}^\beta}[\mathcal{R}^\beta_{t_0}(X_{t_0})|\mathcal{G}_{T-t}]-\mathcal{R}^\beta_{T-t}
        (X_{T-t})}{T-t_0-t}-\frac{1}{2}\normy{\nabla\mathcal{R}^0_{t_0}(X_{t_0})}^2-\big(2\beta\cdot\nabla\psi-\sum\limits_{i=1}^d\frac{\partial\beta^{(i)}}{\partial x_i}\big)(X_{t_0})\bigg|\bigg]\\
        &\quad\leq\mathscr{A}^\beta_{t_0,T-t}+\mathscr{B}^\beta_{t_0,T-t}+\mathscr{C}^\beta_{t_0,T-t},
   \end{aligned}\end{equation*}
   where 
   \begin{equation*}\begin{aligned}
       \mathscr{A}^\beta_{t_0,T-t}&\coloneqq\mathbb{E}^{\mathbb{P}^\beta}\bigg[\bigg|\frac{\mathcal{R}^\beta_{T-t}(X_{T-t})-\mathcal{R}^\beta_{t_0}(X_{t_0})}{T-t_0-t}-\frac{1}{2}\normy{\nabla\mathcal{R}^0_{t_0}(X_{t_0})}^2-\big(2\beta\cdot\nabla\psi-\sum\limits_{i=1}^d\frac{\partial\beta^{(i)}}{\partial x_i}\big)(X_{t_0})\bigg|\bigg],\\
       \mathscr{B}^\beta_{t_0,T-t}&\coloneqq\mathbb{E}^{\mathbb{P}^\beta}\bigg[\bigg|\mathbb{E}^{\mathbb{P}^\beta}\big[\big(2\beta\cdot\nabla\psi-\sum\limits_{i=1}^d\frac{\partial\beta^{(i)}}{\partial x_i}\big)(X_{t_0})|\mathcal{G}_{T-t}\big]-\big(2\beta\cdot\nabla\psi-\sum\limits_{i=1}^d\frac{\partial\beta^{(i)}}{\partial x_i}\big)(X_{t_0})\bigg|\bigg],\\
       \mathscr{C}^\beta_{t_0,T-t}&\coloneqq\mathbb{E}^{\mathbb{P}^\beta}\bigg[\bigg|\mathbb{E}^{\mathbb{P}^\beta}\big[\frac{1}{2}\normy{\nabla\mathcal{R}^0_{t_0}(X_{t_0})}^2|\mathcal{G}_{T-t}\big]-\frac{1}{2}\normy{\nabla\mathcal{R}^0_{t_0}(X_{t_0})}^2\bigg|\bigg].
   \end{aligned}\end{equation*}
   By (\ref{Scheffé}), we know $\mathscr{A}^\beta_{t_0,T-t}\to0$ as $t\nearrow T-t_0$. Using \cite[Theorem 9.4.8]{Chung} and the right-continuity of $(\mathcal{G}_{T-t})_{0\leq t\leq T}$, we know $\mathscr{B}^\beta_{t_0,T-t}\to0$ and $\mathscr{C}^\beta_{t_0,T-t}\to0$ as $t\nearrow T-t_0$. Combining these facts, the assertion (\ref{perturbed trajectorial entropy decay, derivative1}) is verified.
\end{proof}
The limiting identity (\ref{perturbed trajectorial entropy decay, derivative1}) on the time-derivative of the process $\mathcal{R}^\beta$ indicates that this time-derivative is split into two parts: the $\mathbb{P}^\beta$-integrand of the Fisher information and a perturbation term induced by $\beta(\cdot)$. This expression conforms with the spirit that the Itô-Langevin system is perturbed. And this expression is transparent in the sense that the perturbation term is separate from the Fisher information integrand. In consequence, this perturbation term vanishes when $\beta\equiv0$.
%----------------------------------
%corollary 5.5
\begin{corollary}\label{cor: Trajectorial rate of relative entropy dissipation, unperturbed}
    Switching off the perturbation $\beta(\cdot):\mathbb{R}^d\to\mathbb{R}^d$, Theorem \ref{thm: Trajectorial rate of relative entropy dissipation, derivative} reduces to the unperturbed limiting trajectorial identity
    \begin{equation}
        \label{unperturbed trajectorial entropy decay}
        \lim\limits_{t\nearrow T-t_0}\frac{1}{T-t_0-t}\bigg(\mathbb{E}^{\mathbb{P}^0}\big[\mathcal{R}^0_{t_0}(X_{t_0})|\mathcal{G}_{T-t}\big]-\mathcal{R}^0_{T-t}(X_{T-t})\bigg)=\frac{1}{2}\norm{\nabla\mathcal{R}^0_{t_0}(X_{t_0})}^2,
    \end{equation}
    where the limit in (\ref{unperturbed trajectorial entropy decay}) is taken in $L^1(\mathbb{P}^0)$.
\end{corollary}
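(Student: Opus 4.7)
The plan is to derive this corollary as an immediate specialization of Theorem \ref{thm: Trajectorial rate of relative entropy dissipation, derivative}, rather than to redo the limiting argument from scratch. Observe that nothing in the statement of Theorem \ref{thm: Trajectorial rate of relative entropy dissipation, derivative} precludes taking $\beta(\cdot)\equiv 0$: the hypothesis that $\beta(\cdot)=\nabla B(\cdot)$ is of class $\mathcal{C}^\infty(\mathbb{R}^d;\mathbb{R}^d)$ with compact support is trivially satisfied by the zero function, with $B(\cdot)\equiv 0$. Hence it suffices to substitute $\beta\equiv 0$ into (\ref{perturbed trajectorial entropy decay, derivative1}) and to verify that each piece collapses as claimed.

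First I would note that under the substitution $\beta\equiv 0$, the Itô-Langevin dynamics (\ref{perturbed Itô-Langevin dynamics}) reduces to the unperturbed equation driving $(X_t)_{0\leq t\leq T}$ under the law $\mathbb{P}^0$, so that $\mathbb{P}^\beta=\mathbb{P}^0$, the density $p^\beta_t(\cdot)=p^0_t(\cdot)$, the likelihood ratio process $\ell^\beta=\ell^0$, and consequently $\mathcal{R}^\beta=\mathcal{R}^0$. The backward filtration $(\mathcal{G}_{T-t})_{0\leq t\leq T}$ is constructed from $(X_t)_{0\leq t\leq T}$ and its driving Brownian motion, so it is unchanged by the specialization. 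On the right-hand side of (\ref{perturbed trajectorial entropy decay, derivative1}), the perturbation term
\[
\Big(2\beta\cdot\nabla\psi-\sum\limits_{i=1}^d\frac{\partial\beta^{(i)}}{\partial x_i}\Big)(X_{t_0})
\]
vanishes identically, leaving precisely $\tfrac{1}{2}\norm{\nabla\mathcal{R}^0_{t_0}(X_{t_0})}^2$.

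The final step is to transfer the mode of convergence. Since (\ref{perturbed trajectorial entropy decay, derivative1}) holds in $L^1(\mathbb{P}^\beta)$ for every admissible $\beta(\cdot)$, and since $\mathbb{P}^\beta=\mathbb{P}^0$ when $\beta\equiv 0$, the assertion (\ref{unperturbed trajectorial entropy decay}) holds in $L^1(\mathbb{P}^0)$. There is no genuine obstacle here: the entire content of the corollary is already present in Theorem \ref{thm: Trajectorial rate of relative entropy dissipation, derivative}, and Lemma \ref{lem: 5.1} (which was the only place where the smoothness and compact support of $\beta(\cdot)$ was quantitatively used) is trivial in the unperturbed regime since the ratio $\ell^\beta_t/\ell^0_t$ is identically one. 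The corollary therefore follows without any additional computation.
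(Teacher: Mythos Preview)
Your proposal is correct and matches the paper's approach exactly: the paper states this corollary without a separate proof, treating it as the immediate specialization of Theorem \ref{thm: Trajectorial rate of relative entropy dissipation, derivative} to $\beta\equiv 0$. Your added care in checking that the zero perturbation satisfies the standing hypotheses and that each term collapses as expected is more explicit than the paper, but the substance is the same.
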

Theorem \ref{thm: Trajectorial rate of relative entropy dissipation, derivative} and Corollary \ref{cor: Trajectorial rate of relative entropy dissipation, unperturbed} present time-derivatives of the relative entropy from a trajectorial approach, in the perturbed and unperturbed cases, respectively. In the subsequent paragraphs, we will see how these trajectorial identities retrieve the known classical phenomena on the dissipation of relative entropy.
%-------------------------------------

\subsection{Consequences on the classical results}
The identities presented in Theorems \ref{thm: Trajectorial rate of relative entropy dissipation, time-displacement, perturbed} and \ref{thm: Trajectorial rate of relative entropy dissipation, derivative} reveal the trajectorial dynamics of the relative entropy process $\mathcal{R}^\beta$. For the computational convenience, these results are written backward in time. Nevertheless, after taking $\mathbb{P}^\beta$-expectation, these results conform to the known phenomena on the forward-time relative entropy dissipation. And this consequence confirms that the trajectorial identities presented in this expository article yield more information than the classical approach on the relative entropy dissipation.\par
The following paragraphs show how the trajectorial approach eventually rediscovers the known results on the relative entropy dissipation of the Itô-Langevin stochastic system. To begin with, Lemma \ref{lem: 5.6} gives another deviation control on the perturbation effect of $\beta(\cdot)$, apart from Lemma \ref{lem: 5.1}. Such control is important when we discuss the time-derivatives of relative entropy quantity.
%-------------------------------
%lemma 5.6
\begin{lemma}\label{lem: 5.6}
    Fix the time interval $[0,T]$ with $T>t_0$. There exist $C_1,C_2>0$ such that
    \begin{equation}\label{ineq 1, lem 5.2}
        \big|\frac{\ell^\beta_{T-t}(x)}{\ell^0_{T-t}(x)}-1\big|\leq C_1(T-t_0-t)
    \end{equation}
    as well as
    \begin{equation}\label{ineq 2, lem 5.2}
        \mathbb{E}^{\mathbb{P}^0}\big[\int_t^{T-t_0}\normy{\nabla(\mathcal{R}^\beta_{T-\theta}-\mathcal{R}^0_{T-\theta})(X_{T-\theta})}^2\,d\theta\big]\leq C_2(T-t_0-t)^2,
    \end{equation}
    for all $0\leq t< T-t_0$ and $x\in\mathbb{R}^d$.
\end{lemma}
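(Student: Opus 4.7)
The central observation is that $p^\beta_s$ and $p^0_s$ coincide for $s\leq t_0$ and begin to diverge only after the perturbation is switched on at $s=t_0$. I would therefore set $u_s(x)\coloneqq\ell^\beta_s(x)/\ell^0_s(x)=p^\beta_s(x)/p^0_s(x)$, so that $u_{t_0}\equiv 1$, and work with $w_s\coloneqq u_s-1$, which has vanishing initial condition $w_{t_0}\equiv 0$. This zero initialisation at $s=t_0$ is the engine that drives both $O(T-t_0-t)$-type bounds.

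\textbf{Proof of the pointwise bound (\ref{ineq 1, lem 5.2}).} The plan is to derive a linear parabolic PDE for $w$ by applying the quotient rule to the two Fokker--Planck equations (\ref{perturbed Fokker-Plank}). For $s\geq t_0$ this yields schematically
\begin{equation*}
    \partial_s w_s = \tfrac{1}{2}\Delta w_s + \big(\nabla\psi+\nabla\log p^0_s+\beta\big)\cdot\nabla w_s + F_s\,(1+w_s), \qquad F_s\coloneqq \beta\cdot\nabla\log p^0_s+\operatorname{div}\beta.
\end{equation*}
Because $\beta$ has compact support and $\nabla\log p^0_s$ is locally bounded by the parabolic regularity of the unperturbed Fokker--Planck flow, $\|F_s\|_\infty$ is bounded uniformly on $s\in[t_0,T]$. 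Since $w_{t_0}\equiv0$, a maximum-principle argument (equivalently, Duhamel against the heat semigroup of the associated drift-diffusion operator) then delivers $\|w_s\|_\infty\leq C(s-t_0)$. Substituting $s=T-t$ gives (\ref{ineq 1, lem 5.2}).

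\textbf{Proof of the $L^2$ bound (\ref{ineq 2, lem 5.2}).} Since $\mathcal{R}^\beta_s-\mathcal{R}^0_s=\log u_s$, and Lemma \ref{lem: 5.1} provides two-sided bounds $c\leq u_s\leq C$, I would estimate
\begin{equation*}
    \normy{\nabla(\mathcal{R}^\beta_{T-\theta}-\mathcal{R}^0_{T-\theta})}^2=\normy{\nabla\log u_{T-\theta}}^2\leq C\normy{\nabla w_{T-\theta}}^2.
\end{equation*}
For $\|\nabla w_s\|_\infty$, the plan is to apply the heat-kernel gradient estimate $\|\nabla e^{\tau\mathcal{L}}f\|_\infty\leq C\tau^{-1/2}\|f\|_\infty$ to the Duhamel representation of $w$; combined with the uniform bound $\|F_s(1+w_s)\|_\infty\leq C$ obtained in the previous step, this produces $\|\nabla w_s\|_\infty^2\leq C(s-t_0)$. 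Taking $\mathbb{P}^0$-expectation (which is then a uniform-in-$x$ bound) and integrating gives
\begin{equation*}
    \int_t^{T-t_0}\mathbb{E}^{\mathbb{P}^0}\big[\normy{\nabla w_{T-\theta}(X_{T-\theta})}^2\big]\,d\theta\leq C\int_t^{T-t_0}(T-\theta-t_0)\,d\theta=\tfrac{C}{2}(T-t_0-t)^2,
\end{equation*}
which is precisely (\ref{ineq 2, lem 5.2}).

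\textbf{Main obstacle.} The crux of both parts is the parabolic regularity: the uniform boundedness of $\nabla\log p^0_s$ on $\operatorname{supp}(\beta)$ and the heat-kernel gradient estimate $\|\nabla e^{\tau\mathcal{L}}f\|_\infty\leq C\tau^{-1/2}\|f\|_\infty$. These follow from the smoothness of $\nabla\psi$ and the non-degeneracy of $p^0$, but their invocation is the real technical content. An alternative Girsanov route --- using the pathwise expression (\ref{Z beta: control 2}) for $\log Z^\beta_s$ and Taylor-expanding $u_s(x)-1=\mathbb{E}^{\mathbb{P}^0}[Z^\beta_s-1\mid X_s=x]$ --- runs into the same hurdle, since the naive pathwise bound $|B(X_s)-B(X_{t_0})|\leq\|\nabla B\|_\infty\|X_s-X_{t_0}\|$ yields only a $\sqrt{s-t_0}$ rate after conditioning, so a cancellation (essentially equivalent to the PDE smoothing above) must be extracted.
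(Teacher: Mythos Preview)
Your approach is correct in outline but follows a genuinely different route from the paper, and the comparison is instructive.

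\textbf{Part (\ref{ineq 1, lem 5.2}).} You argue via the forward PDE for $w_s=u_s-1$ and a maximum-principle/Duhamel bound. The paper instead stays on the stochastic side: it derives the \emph{backward} SDE for $L^\beta_{T-t}(X_{T-t})\coloneqq u_{T-t}(X_{T-t})$ with respect to $(\mathcal{G}_{T-t})$ and the Brownian motion $\overline{W}^{\mathbb{P}^0}$, observes that the drift is bounded by a constant $C$ because $\beta$ has compact support, and concludes that $L^\beta_{T-t}(X_{T-t})\pm Ct$ are sub/supermartingales. Conditioning on $X_{T-t}=x$ and using $L^\beta_{t_0}\equiv 1$ then gives the pointwise bound directly. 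The two arguments are morally the same (stochastic sub/supersolutions versus parabolic comparison), but the paper's version never needs to discuss the drift coefficient $\nabla\psi+\nabla\log p^0_s$ globally---it only needs the drift of the SDE for $L^\beta$ to be bounded, which follows from compact support of $\beta$ alone.

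\textbf{Part (\ref{ineq 2, lem 5.2}).} Here the routes diverge more substantially. Your plan is to prove the \emph{pointwise} gradient bound $\|\nabla w_s\|_\infty^2\leq C(s-t_0)$ via a heat-kernel estimate $\|\nabla e^{\tau\mathcal{L}}f\|_\infty\leq C\tau^{-1/2}\|f\|_\infty$; you correctly flag this as the main technical burden, and it is a real one, since the drift in $\mathcal{L}$ contains $\nabla\psi$ and $\nabla\log p^0_s$, both potentially unbounded. The paper sidesteps this entirely: applying It\^o's formula to $\log L^\beta_{T-t}(X_{T-t})-L^\beta_{T-t}(X_{T-t})+1$ along the backward SDE for $L^\beta$ produces the \emph{exact identity}
\[
\mathbb{E}^{\mathbb{P}^0}\Big[\int_t^{T-t_0}\!\!\big\|\nabla\log L^\beta_{T-\theta}(X_{T-\theta})\big\|^2 d\theta\Big]
=\mathbb{E}^{\mathbb{P}^0}\Big[\log L^\beta_{T-t}-L^\beta_{T-t}+1+\int_t^{T-t_0}\!\! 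G_{T-\theta}(X_{T-\theta})\,d\theta\Big],
\]
where $G_{T-\theta}=(L^\beta_{T-\theta}-1)\cdot(\text{bounded term supported on }\operatorname{supp}\beta)$. Since $\log x-x+1\leq 0$, the first three terms contribute nonpositively, and part~(\ref{ineq 1, lem 5.2}) gives $|G_{T-\theta}|\leq C(T-t_0-\theta)$; integrating yields $(T-t_0-t)^2$. This needs no gradient regularity at all and is both shorter and more self-contained than your heat-kernel route.

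In short: your PDE approach is sound, but the paper's trajectorial method---especially the concavity trick $\log x-x+1\leq 0$ for part~(\ref{ineq 2, lem 5.2})---is more robust and aligns with the backward-semimartingale machinery already built in Section~\ref{sec: semimartingale}.
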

\begin{proof}
    Only in this proof, we will denote by $L^\beta_t(x)\coloneqq\ell^\beta_t(x)/\ell^0_t(x)$ for all $(t,x)\in[0,T]\times\mathbb{R}^d$. Since for all $t\geq0$, $\log L^\beta_t(X_t)=\mathcal{R}^\beta_t(X_t)-\mathcal{R}^0_t(X_t)$, then
    \begin{equation*}\begin{aligned}
        &\mathbb{E}^{\mathbb{P}^0}\big[\int_0^{T-t_0}\normy{\nabla\log L^\beta_{T-\theta}(X_{T-\theta})}^2\,d\theta\big]\\
        &\quad\leq2\mathbb{E}^{\mathbb{P}^0}\big[\int_0^{T-t_0}\normy{\nabla\mathcal{R}^0_{T-\theta}(X_{T-\theta})}^2\,d\theta\big]+2\mathbb{E}^{\mathbb{P}^0}\big[\int_0^{T-t_0}\normy{\nabla\mathcal{R}^\beta_{T-\theta}(X_{T-\theta})}^2\,d\theta\big]\\
        &\quad\leq2\mathbb{E}^{\mathbb{P}^0}\big[\int_0^{T-t_0}\normy{\nabla\mathcal{R}^0_{T-\theta}(X_{T-\theta})}^2\,d\theta\big]+2\mathbb{E}^{\mathbb{P}^\beta}\big[\int_0^{T-t_0}Z^\beta_{T-\theta}\normy{\nabla\mathcal{R}^\beta_{T-\theta}(X_{T-\theta})}^2\,d\theta\big].
    \end{aligned}\end{equation*}
    Using (\ref{Z beta: control 1}) and (\ref{Z beta: control 2}), there exists $C^\prime>0$ such that $Z^\beta_t(x)\leq C^\prime$ for all $(t,x)\in[0,T]\times\mathbb{R}^d$. Hence,
    \begin{equation*}\begin{aligned}
        &\mathbb{E}^{\mathbb{P}^0}\big[\int_0^{T-t_0}\normy{\nabla\log L^\beta_{T-\theta}(X_{T-\theta})}^2\,d\theta\big]\\
        &\quad\leq2\mathbb{E}^{\mathbb{P}^0}\big[\int_0^{T-t_0}\normy{\nabla\mathcal{R}^0_{T-\theta}(X_{T-\theta})}^2\,d\theta\big]+2C^\prime\mathbb{E}^{\mathbb{P}^\beta}\big[\int_0^{T-t_0}\normy{\nabla\mathcal{R}^\beta_{T-\theta}(X_{T-\theta})}^2\,d\theta\big]\\
        &\quad\leq2C^{\prime\prime}(1+C^\prime)<\infty,
    \end{aligned}\end{equation*}
    for some $C^{\prime\prime}>0$, where the last inequality is due to (\ref{monotone convergence}). Since $\nabla\log L^\beta_t(x)=\nabla L^\beta_t(x)/L^\beta_t(x)$ for all $x\in\mathbb{R}^d$, using (\ref{bdd, l ratio}) again, we have
    \begin{equation}\label{martingale part is 2nd-integrable}
        \mathbb{E}^{\mathbb{P}^0}\big[\int_0^{T-t_0}\normy{\nabla L^\beta_{T-\theta}(X_{T-\theta})}^2\,d\theta\big]\leq C^{\prime\prime\prime}
    \end{equation}
    for some constant $C^{\prime\prime\prime}>0$. Using (\ref{semimartingale decomposition of l}) for both the perturbed and unperturbed cases and by the Itô formula, we see that the time-reversal $(L^\beta_{T-t}(X_{T-t}))_{0\leq t\leq T-t_0}$ satisfies
    \begin{equation}\label{SDE for L}
        dL^\beta_{T-t}(X_{T-t})=\nabla L^\beta_{T-t}(X_{T-t})\,d\overline{W}^{\mathbb{P}^0}_{T-t}-\big(L^\beta_{T-t}\sum\limits_{i=1}^d\frac{\partial\beta^{(i)}}{\partial x_i}\big)(X_{T-t})-L^\beta_{T-t}\big(\beta\cdot\nabla\log\text{(}p^0_{T-t}L^\beta_{T-t})\big)(X_{T-t})\,dt
    \end{equation}
    for all $0\leq t<T-t_0$, with respect to the filtration $(\mathcal{G}_{T-t})_{0\leq t\leq T-t_0}$. In view of (\ref{martingale part is 2nd-integrable}), the martingale part from the semimartingale decomposition of $(L^\beta_{T-t}(X_{T-t}))_{0\leq t\leq T-t_0}$ is $L^2(\mathbb{P}^0)$-bounded. Its drift term vanishes when $X_{T-t}$ exits the compact support of $\beta(\cdot)$ in $\mathbb{R}^d$. Hence, the drift term is bounded by
    \begin{equation*}
        C^{\prime\prime\prime\prime}\coloneqq\sup\big\{\big|L^\beta_{T-t}\sum\limits_{i=1}^d\frac{\partial\beta^{(i)}}{\partial x_i}\big|(x)+\big|L^\beta_{T-t}\big(\beta\cdot\nabla\log\text{(}p^0_{T-t}L^\beta_{T-t})\big)\big|(x):\,x\in\mathbb{R}^d,\,0\leq t\leq T-t_0\big\}<\infty.
    \end{equation*}
    Henceforth, the processes
    \begin{equation*}
        L^\beta_{T-t}(X_{T-t})+C^{\prime\prime\prime\prime}t\qquad\text{and}\qquad L^\beta_{T-t}(X_{T-t})-C^{\prime\prime\prime\prime}t,\quad\text{with}\quad 0\leq t\leq T-t_0
    \end{equation*}
    are respectively $(\mathcal{G}_{T-t})_{0\leq t\leq T-t_0}$-submartingale and $(\mathcal{G}_{T-t})_{0\leq t\leq T-t_0}$-supermartingale. Then,
    \begin{equation*}
        \big|\mathbb{E}^{\mathbb{P}^0}\big[L^\beta_{t_0}(X_{t_0})|X_{T-t}=x\big]- L^\beta_{T-t}(x)\big|\leq C^{\prime\prime\prime\prime}(T-t_0-t),\quad\text{for all}\quad(t,x)\in[0,T-t_0]\times\mathbb{R}^d.
    \end{equation*}
    Since $L^\beta_{t_0}(\cdot)\equiv1$, taking $C_1\coloneqq C^{\prime\prime\prime\prime}$, (\ref{ineq 1, lem 5.2}) is verified. Using (\ref{SDE for L}) and the Itô formula, we observe
    \begin{equation*}\begin{aligned}
        &\mathbb{E}^{\mathbb{P}^0}\big[\int_t^{T-t_0}\normy{\nabla(\mathcal{R}^\beta_{T-\theta}-\mathcal{R}^0_{T-\theta})(X_{T-\theta})}^2\,d\theta\big]\\
        &\quad=\mathbb{E}^{\mathbb{P}^0}\big[\log L^\beta_{T-t}(X_{T-t})-L^\beta_{T-t}(X_{T-t})+1+\int_t^{T-t_0}G_{T-\theta}(X_{T-\theta})\,d\theta\big],
    \end{aligned}\end{equation*}
    where the function $G_{T-t}(\cdot):\mathbb{R}^d\to\mathbb{R}$ is defined as
    \begin{equation*}
        G_{T-t}(x)\coloneqq(L^\beta_{T-t}(x)-1)\big(\beta\cdot\nabla\log\text{(}p^\beta_{T-t}L^\beta_{T-t})+\sum\limits_{i=1}^d\frac{\partial\beta^{(i)}}{\partial x_i}\big)(x),\quad\text{for all}\quad(t,x)\in[0,T-t_0]\times\mathbb{R}^d.
    \end{equation*}
    Introduce the constant
    \begin{equation*}
        C^{\prime\prime\prime\prime\prime}\coloneqq\sup\big\{\big|\beta\cdot\nabla\log\text{(}p^\beta_{T-t}L^\beta_{T-t})+\sum\limits_{i=1}^d\frac{\partial\beta^{(i)}}{\partial x_i}\big|:\,x\in\mathbb{R}^d,\,0\leq t\leq T-t_0\big\}<\infty.
    \end{equation*}
    Use (\ref{ineq 1, lem 5.2}), it is then immediate that
    \begin{equation*}
        \mathbb{E}^{\mathbb{P}^0}\big[\int_t^{T-t_0}\normy{\nabla(\mathcal{R}^\beta_{T-\theta}-\mathcal{R}^0_{T-\theta})(X_{T-\theta})}^2\,d\theta\big]\leq\mathbb{E}^{\mathbb{P}^0}\big[\int_t^{T-t_0}G_{T-\theta}(X_{T-\theta})\,d\theta\big]\leq C^{\prime\prime\prime\prime}C^{\prime\prime\prime\prime\prime}(T-t_0-t)^2.
    \end{equation*}
    Taking $C_2\coloneqq C^{\prime\prime\prime\prime}C^{\prime\prime\prime\prime\prime}$, (\ref{ineq 2, lem 5.2}) is verified.
\end{proof}
Having the Lemmas \ref{lem: 5.1} and \ref{lem: 5.6} as deviation control, we retrieve the known classical results on the relative entropy dissipation from the trajectorial approach in Theorems \ref{thm: Trajectorial rate of relative entropy dissipation, time-displacement, perturbed} and \ref{thm: Trajectorial rate of relative entropy dissipation, derivative}.
%----------------------------------------
%theorem 5.7
\begin{theorem}\label{thm: classical result, perturbed}
    Fix $t_0\geq0$ to be the time point when $\beta(\cdot)$ is initiated. We retrieve the known result on the forward-time dissipation of the relative entropy under perturbation,
    \begin{equation}\label{perturbed classical entropy decay, time-displacement}
        \mathbb{H}\big[P^\beta_t|Q\big]-\mathbb{H}\big[P^\beta_{t_0}|Q\big]=-\frac{1}{2}\mathbb{E}^{\mathbb{P}^\beta}\big[\int_{t_0}^t\normy{\nabla\mathcal{R}^\beta_{\theta}(X_{\theta})}^2 \,d\theta\big]+ \mathbb{E}^{\mathbb{P}^\beta}\big[\int_{t_0}^t\big(\sum\limits_{i=1}^d\frac{\partial\beta^{(i)}}{\partial x_i}-2\beta\cdot\nabla\psi\big)(X_{\theta})\,d\theta\big] 
    \end{equation}
    for all $t\geq t_0$. Furthermore, we also have the time-derivative, 
    \begin{equation}\label{perturbed classical entropy decay, derivative}
        \lim\limits_{t\searrow t_0} \frac{1}{t-t_0} \bigg(\mathbb{H}[P^\beta_t|Q]-\mathbb{H}[P^\beta_{t_0}|Q]\bigg)=-\frac{1}{2}\mathbb{I}[P^0_{t_0}|Q]-\mathbb{E}^{\mathbb{P}^0}\big[\big(\beta\cdot\nabla\mathcal{R}^0_{t_0}\big)(X_{t_0})\big].
    \end{equation}
\end{theorem}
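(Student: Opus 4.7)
The strategy is to derive both claims by taking a $\mathbb{P}^\beta$-expectation in the trajectorial identities of Theorems \ref{thm: Trajectorial rate of relative entropy dissipation, time-displacement, perturbed} and \ref{thm: Trajectorial rate of relative entropy dissipation, derivative}, and then reparametrizing from backward to forward time. For the displacement identity (\ref{perturbed classical entropy decay, time-displacement}), I start from Theorem \ref{thm: Trajectorial rate of relative entropy dissipation, time-displacement, perturbed} and apply $\mathbb{E}^{\mathbb{P}^\beta}[\,\cdot\,]$ to both sides. The tower property dissolves the inner $\mathcal{G}_{T-t}$-conditional expectations, and by the identification $\mathbb{H}[P^\beta_s|Q] = \mathbb{E}^{\mathbb{P}^\beta}[\mathcal{R}^\beta_s(X_s)]$ the LHS collapses to $\mathbb{H}[P^\beta_{t_0}|Q] - \mathbb{H}[P^\beta_{T-t}|Q]$. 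Fubini then permits the substitution $\tau = T - \theta$ in each of the two integrals on the RHS, mapping $\theta \in [t,T-t_0]$ onto $\tau \in [t_0,T-t]$. Setting $s := T-t$ (so that $s$ ranges over $(t_0,T]$ as $t$ ranges over $[0,T-t_0)$) and rearranging signs delivers (\ref{perturbed classical entropy decay, time-displacement}).

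For the time-derivative (\ref{perturbed classical entropy decay, derivative}), I start from the $L^1(\mathbb{P}^\beta)$-convergence provided by Theorem \ref{thm: Trajectorial rate of relative entropy dissipation, derivative}. Since $L^1$-convergence transfers to convergence of expectations, applying $\mathbb{E}^{\mathbb{P}^\beta}[\,\cdot\,]$ commutes with the limit and dissolves the conditional expectation on the LHS. Reparametrizing via $s = T-t$ and using $\mathbb{E}^{\mathbb{P}^\beta}[\mathcal{R}^\beta_s(X_s)] = \mathbb{H}[P^\beta_s|Q]$ yields
\[
\lim_{s\searrow t_0} \frac{1}{s-t_0}\Big(\mathbb{H}[P^\beta_s|Q]-\mathbb{H}[P^\beta_{t_0}|Q]\Big) = -\tfrac{1}{2}\mathbb{E}^{\mathbb{P}^\beta}\big[\normy{\nabla\mathcal{R}^0_{t_0}(X_{t_0})}^2\big] - \mathbb{E}^{\mathbb{P}^\beta}\Big[\big(2\beta\cdot\nabla\psi - \textstyle\sum_{i=1}^d \frac{\partial \beta^{(i)}}{\partial x_i}\big)(X_{t_0})\Big].
\]
Because the perturbation is switched on only from $t_0$ onwards, the path laws $\mathbb{P}^\beta$ and $\mathbb{P}^0$ coincide on $\mathcal{F}_{t_0}$; hence $P^\beta_{t_0}=P^0_{t_0}$, and both expectations on the RHS may be taken under $\mathbb{P}^0$, making the first one equal to $-\tfrac{1}{2}\mathbb{I}[P^0_{t_0}|Q]$ by definition (\ref{Fisher information}).

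To reconcile the remaining perturbation term with the form demanded by (\ref{perturbed classical entropy decay, derivative}), I invoke the identity $\nabla\mathcal{R}^0_{t_0} = \nabla\log p^0_{t_0} + 2\nabla\psi$ together with one integration by parts, legitimate because $\beta = \nabla B$ has compact support:
\[
\mathbb{E}^{\mathbb{P}^0}\big[\beta \cdot \nabla\log p^0_{t_0}(X_{t_0})\big] = \int_{\mathbb{R}^d} \beta(x) \cdot \nabla p^0_{t_0}(x)\,dx = -\int_{\mathbb{R}^d}(\nabla\cdot\beta)(x)\,p^0_{t_0}(x)\,dx,
\]
which gives $\mathbb{E}^{\mathbb{P}^0}\big[(2\beta\cdot\nabla\psi - \nabla\cdot\beta)(X_{t_0})\big] = \mathbb{E}^{\mathbb{P}^0}\big[(\beta\cdot\nabla\mathcal{R}^0_{t_0})(X_{t_0})\big]$ and hence (\ref{perturbed classical entropy decay, derivative}). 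The main obstacle I anticipate is the careful bookkeeping of signs and the forward-versus-backward parametrization; once these are handled, no additional integrability input is required beyond Lemmas \ref{lem: M is L2 martingale} and \ref{lem: 5.1}, which have already been exploited in the proofs of Theorems \ref{thm: Trajectorial rate of relative entropy dissipation, time-displacement, perturbed} and \ref{thm: Trajectorial rate of relative entropy dissipation, derivative}.
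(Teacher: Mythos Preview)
Your argument is correct. For the displacement identity \eqref{perturbed classical entropy decay, time-displacement} you follow exactly the paper's route: take the $\mathbb{P}^\beta$-expectation of \eqref{perturbed trajectorial entropy decay, time-displacement} and reparametrize.

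For the derivative \eqref{perturbed classical entropy decay, derivative}, however, you take a slightly different path from the paper. You pass directly through Theorem~\ref{thm: Trajectorial rate of relative entropy dissipation, derivative}: the $L^1(\mathbb{P}^\beta)$-limit there lets you commute expectation and limit in one stroke, and the integration-by-parts identity for the perturbation term is the same one the paper uses. The paper instead differentiates the integral identity \eqref{perturbed classical entropy decay, time-displacement} termwise, and for the Fisher term
\[
\lim_{t\searrow t_0}\frac{1}{t-t_0}\,\mathbb{E}^{\mathbb{P}^\beta}\Big[\int_{t_0}^t\normy{\nabla\mathcal{R}^\beta_\theta(X_\theta)}^2\,d\theta\Big]=\mathbb{I}\big[P^0_{t_0}\,\big|\,Q\big]
\]
it invokes the deviation estimates of Lemma~\ref{lem: 5.6} (specifically \eqref{ineq 1, lem 5.2} and \eqref{ineq 2, lem 5.2}) together with an external reference. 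Your route is cleaner and more in keeping with the paper's own narrative that the classical results should fall out of the trajectorial ones by taking expectation; the paper's route, on the other hand, makes the dependence on the quantitative perturbation bounds explicit, which is useful later in Section~\ref{sec: wasserstein}. Either argument is complete.
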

\begin{proof}
    Following Theorem \ref{thm: Trajectorial rate of relative entropy dissipation, time-displacement, perturbed} and taking $\mathbb{P}^\beta$-expectation on (\ref{perturbed trajectorial entropy decay, time-displacement}), we get (\ref{perturbed classical entropy decay, time-displacement}). Since $\beta(\cdot)$ is smooth with compact support, the continuity of the sample paths of $(X_t)_{t\geq0}$ renders us,
    \begin{equation}\label{second term}
        \lim\limits_{t\searrow t_0} \frac{1}{t-t_0}\mathbb{E}^{\mathbb{P}^\beta}\big[\int_{t_0}^t\big(\sum\limits_{i=1}^d\frac{\partial\beta^{(i)}}{\partial x_i}-2\beta\cdot\nabla\psi\big)(X_{\theta})\,d\theta\big]=\mathbb{E}^{\mathbb{P}^\beta}\big[\big(\sum\limits_{i=1}^d\frac{\partial\beta^{(i)}}{\partial x_i}-2\beta\cdot\nabla\psi\big)(X_{t_0})\big].
    \end{equation}
    Notice that $X_{t_0}$ has the same distribution under $\mathbb{P}^\beta$ and $\mathbb{P}^0$. Henceforth, we can replace the $\mathbb{P}^\beta$-expectation in (\ref{second term}) with the $\mathbb{P}^0$-expectation. Moreover, integration by parts yields
    \begin{equation*}\begin{aligned}
        &\mathbb{E}^{\mathbb{P}^0}\big[\big(\sum\limits_{i=1}^d\frac{\partial\beta^{(i)}}{\partial x_i}-2\beta\cdot\nabla\psi\big)(X_{t_0})\big]=\int_{\mathbb{R}^d}\big(\sum\limits_{i=1}^d\frac{\partial\beta^{(i)}}{\partial x_i}-2\beta\cdot\nabla\psi\big)(x)p_{t_0}(x)\,dx\\
        &\quad\quad\quad=-\int_{\mathbb{R}^d}\beta\cdot\nabla(\log p_{t_0}+2\psi)(x)p_{t_0}(x)\,dx=-\mathbb{E}^{\mathbb{P}^0}\big[\big(\beta\cdot\nabla\mathcal{R}^0_{t_0}\big)(X_{t_0})\big].
    \end{aligned}\end{equation*}
    Applying (\ref{ineq 1, lem 5.2}) and (\ref{ineq 2, lem 5.2}),
    \begin{equation}\label{first term}
        \lim\limits_{t\searrow t_0} \frac{1}{t-t_0}\mathbb{E}^{\mathbb{P}^\beta}\big[\int_{t_0}^t\normy{\nabla\mathcal{R}^\beta_{\theta}(X_{\theta})}^2 \,d\theta\big]=\lim\limits_{t\searrow t_0} \frac{1}{t-t_0}\mathbb{E}^{\mathbb{P}^0}\big[\int_{t_0}^t\normy{\nabla\mathcal{R}^0_{\theta}(X_{\theta})}^2 \,d\theta\big]=\mathbb{I}\big[P^0_{T_0}|Q\big],
    \end{equation}
    where the last equality is due to Tschiderer \cite[Section 3.1]{Tschiderer}. Combining (\ref{second term}) and (\ref{first term}), the assertion (\ref{perturbed classical entropy decay, derivative}) is verified.
\end{proof}
When there is no perturbation, the time-derivative of the relative entropy dissipation conforms to the Fisher information modulo a multiplicative factor $1/2$. This unperturbed scenario will be further discussed in Section \ref{sec: wasserstein}, when we look at the steepest descent property of the relative entropy, from the Wasserstein space perspective.
%---------------------------------
%corollary 5.8
\begin{corollary}\label{cor: classical result, unperturbed}
    Switching off the perturbation $\beta(\cdot):\mathbb{R}^d\to\mathbb{R}^d$, Theorem \ref{thm: classical result, perturbed} reduces to the classical result of the dissipation of relative entropy,
    \begin{equation}\label{unperturbed classical entropy decay, time-displacement}
        \mathbb{H}\big[P^0_t|Q\big]-\mathbb{H}\big[P^0_{t_0}|Q\big]=-\frac{1}{2}\mathbb{E}^{\mathbb{P}^0}\big[\int_{t_0}^t\normy{\nabla\mathcal{R}^0_{\theta}(X_{\theta})}^2 \,d\theta\big],\quad\text{for all}\quad t\geq t_0,
    \end{equation}
    as well as the classical limiting identity
    \begin{equation}\label{unperturbed classical entropy decay, derivative}
        \lim\limits_{t\searrow t_0}\frac{1}{t-t_0}\bigg(\mathbb{H}[P^0_t|Q]-\mathbb{H}[P^0_{t_0}|Q]\bigg)=-\frac{1}{2}\mathbb{I}[P^0_{t_0}|Q].
    \end{equation}
\end{corollary}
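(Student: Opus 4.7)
The plan is to derive this corollary as a direct specialization of Theorem \ref{thm: classical result, perturbed} by collapsing the perturbation field. When $\beta(\cdot) \equiv 0$, the Itô--Langevin dynamics (\ref{perturbed Itô-Langevin dynamics}) reduces to its unperturbed version throughout $[0,\infty)$, so the law $\mathbb{P}^\beta$ coincides with $\mathbb{P}^0$, and similarly the likelihood-ratio and relative-entropy processes $\ell^\beta$, $\mathcal{R}^\beta$ coincide with their zero-script counterparts $\ell^0$, $\mathcal{R}^0$. Every occurrence of $\beta$, its gradient-divergence $\sum_{i=1}^d \partial\beta^{(i)}/\partial x_i$, and the inner product $\beta \cdot \nabla\psi$ then vanishes identically.

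First, I would invoke the identity (\ref{perturbed classical entropy decay, time-displacement}) of Theorem \ref{thm: classical result, perturbed} with $\beta \equiv 0$. The second expectation on the right-hand side, which is the entire perturbation contribution $\mathbb{E}^{\mathbb{P}^\beta}[\int_{t_0}^t(\sum_i \partial\beta^{(i)}/\partial x_i - 2\beta\cdot\nabla\psi)(X_\theta)\,d\theta]$, collapses to zero term-by-term. What remains is exactly (\ref{unperturbed classical entropy decay, time-displacement}).

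For the limiting identity, I would apply (\ref{perturbed classical entropy decay, derivative}) with $\beta \equiv 0$. The term $\mathbb{E}^{\mathbb{P}^0}[(\beta\cdot\nabla\mathcal{R}^0_{t_0})(X_{t_0})]$ vanishes, leaving $-\tfrac{1}{2}\mathbb{I}[P^0_{t_0}|Q]$, which is precisely (\ref{unperturbed classical entropy decay, derivative}).

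Because the corollary is obtained through pure substitution, no new estimates are required, and there is no substantive obstacle; the deviation-control Lemmas \ref{lem: 5.1} and \ref{lem: 5.6} that were the technical core of the perturbed theorem are trivially satisfied in the unperturbed setting. The only conceptual remark worth emphasizing is that (\ref{unperturbed classical entropy decay, derivative}) is exactly the classical \emph{de Bruijin identity} recalled in the introduction, confirming that the trajectorial machinery developed in Sections \ref{sec: semimartingale} and \ref{sec: trajectorial dissipation} indeed retrieves the well-known entropy-Fisher information relationship as a special case.
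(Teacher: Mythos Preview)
Your proposal is correct and matches the paper's approach exactly: the corollary is stated without proof in the paper because it is obtained by simply setting $\beta\equiv 0$ in Theorem~\ref{thm: classical result, perturbed}, whereupon all perturbation terms in (\ref{perturbed classical entropy decay, time-displacement}) and (\ref{perturbed classical entropy decay, derivative}) vanish identically. Your additional remarks about the trivial satisfaction of Lemmas~\ref{lem: 5.1} and~\ref{lem: 5.6} and the connection to the de Bruijn identity are accurate and in the spirit of the exposition.
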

So far we have demonstrated that the classical consequences on the dissipation of relative entropy can be retrieved from the trajectorial formulation via Theorem \ref{thm: classical result, perturbed} and Corollary. However, it has not been answered why we choose the indirect, and probably less transparent, approach of time-reversal. In the remainder of this section, it will be shown that the backward-time approach is indeed more convenient than the forward-time approach.

%-------------------------------------
%-------------------------------------

\subsection{Defects in the forward-time approach}
We have mentioned in Section \ref{sec: intro} that the time-reversal principle is advantageous in its computational convenience. By comparing our derivation to the forward-time approach, we highlight where such computational convenience comes from.\par
We first compute the stochastic differential equations satisfied by the forward-time processes $(\ell^\beta_t(X_t))_{t\geq0}$ and $(\mathcal{R}^\beta_t(X_t))_{t\geq0}$. Notice that both processes are $(\mathcal{F}_t)_{t\geq0}$-adapted. Similar to the backward-time scenario, we use the Fokker-Planck equations (\ref{perturbed Fokker-Plank}) and (\ref{Fokker-Plank for Q}) to capture the differential structure of the likelihood ratio $\ell^\beta_t(x)=p^\beta_t(x)\exp(2\psi(x))$.
%---------------------------------
%lemma 5.9
\begin{lemma}
    In the forward-time approach, the likelihood ratio process $(\ell^\beta_t(X_t))_{t\geq0}$ satisfies the stochastic differential equation,
    \begin{equation}\begin{aligned}\label{forward-time l}
        d\ell^\beta_t(X_t)&=\big(\sum\limits_{1\leq i\leq d}\frac{\partial\beta^{(i)}}{\partial x_i}-2\beta\cdot\nabla\psi\big)\ell^\beta_t(X_t)I_{\{t>t_0\}}\,dt\\
        &\quad+\big(\sum\limits_{1\leq i\leq d}\frac{\partial^2\ell^\beta_t}{\partial x_i^2}-2\nabla\ell^\beta_t\cdot\nabla\psi\big)(X_t)\,dt+\nabla\ell^\beta_t(X_t)\,dW^\beta_t,\quad\text{for all}\quad t\geq0.
    \end{aligned}\end{equation}
    And its logarithm, the forward-time relative entropy process $(\mathcal{R}^\beta_t(X_t))_{t\geq0}$ satisfies,
    \begin{equation}\begin{aligned}\label{forward-time R}
        d\mathcal{R}^\beta_t(X_t)&=\big(\sum\limits_{1\leq i\leq d}\frac{\partial\beta^{(i)}}{\partial x_i}-2\beta\cdot\nabla\psi\big)I_{\{t>t_0\}}\,dt+\big(\sum\limits_{1\leq i\leq d}\frac{1}{\ell^\beta_t}\frac{\partial^2\ell^\beta_t}{\partial x_i^2}-2\nabla\mathcal{R}^\beta_t\cdot\nabla\psi\big)(X_t)\,dt\\
        &\quad-\frac{1}{2}\normy{\nabla\mathcal{R}^\beta_t(X_t)}^d\,dt+\nabla\mathcal{R}^\beta_t(X_t)\,dW^\beta_t,\quad\text{for all}\quad t\geq0.
    \end{aligned}\end{equation}
\end{lemma}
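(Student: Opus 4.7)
The plan is to apply Itô's formula to the deterministic function $\ell^\beta_t(x) = p^\beta_t(x) e^{2\psi(x)}$ evaluated along the forward diffusion $(X_t)_{t\geq0}$, and to extract the time-derivative $\partial_t \ell^\beta_t$ from the Fokker--Planck equation (\ref{perturbed Fokker-Plank}). Concretely, using $dX_t = -(\nabla\psi + \beta I_{\{t>t_0\}})(X_t)\,dt + dW^\beta_t$, Itô's formula gives
\begin{equation*}
    d\ell^\beta_t(X_t) = \partial_t\ell^\beta_t(X_t)\,dt - \nabla\ell^\beta_t(X_t)\cdot(\nabla\psi + \beta I_{\{t>t_0\}})(X_t)\,dt + \tfrac{1}{2}\Delta\ell^\beta_t(X_t)\,dt + \nabla\ell^\beta_t(X_t)\,dW^\beta_t,
\end{equation*}
so the only non-routine step is the expression of $\partial_t\ell^\beta_t$ in terms of the spatial derivatives of $\ell^\beta_t$ itself.

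To obtain such an expression, I would write $p^\beta_t = \ell^\beta_t e^{-2\psi}$, substitute this into (\ref{perturbed Fokker-Plank}), and multiply the whole equation by $e^{2\psi}$. The derivatives produce three kinds of contributions: a transport-type term $-\nabla\psi\cdot\nabla\ell^\beta_t + \beta\cdot\nabla\ell^\beta_t\,I_{\{t>t_0\}}$, a pure Laplacian term $\tfrac{1}{2}\Delta\ell^\beta_t$, and a zeroth-order term $\bigl(\sum_i\partial_i\beta^{(i)} - 2\beta\cdot\nabla\psi\bigr)\ell^\beta_t\,I_{\{t>t_0\}}$. The potentially confusing $\|\nabla\psi\|^2\ell^\beta_t$ and $\ell^\beta_t\Delta\psi$ contributions produced by differentiating $e^{\pm 2\psi}$ cancel exactly between the drift and Laplacian pieces of the Fokker--Planck operator; this is the main algebraic check that needs to be carried out carefully. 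Plugging the resulting expression for $\partial_t\ell^\beta_t$ back into Itô's formula, the $\beta\cdot\nabla\ell^\beta_t\,I_{\{t>t_0\}}$ terms cancel and the $\nabla\psi\cdot\nabla\ell^\beta_t$ terms combine to $-2\nabla\psi\cdot\nabla\ell^\beta_t$, yielding precisely (\ref{forward-time l}).

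For the second identity (\ref{forward-time R}), I would simply apply Itô's formula to $\log\ell^\beta_t(X_t)$ using (\ref{forward-time l}). The quadratic variation of $\ell^\beta(X)$ is $\|\nabla\ell^\beta_t(X_t)\|^2\,dt$, so the Itô correction contributes $-\tfrac{1}{2}\|\nabla\ell^\beta_t/\ell^\beta_t\|^2\,dt = -\tfrac{1}{2}\|\nabla\mathcal{R}^\beta_t\|^2\,dt$. The drift term from (\ref{forward-time l}), divided by $\ell^\beta_t$, gives $\sum_i \ell^{\beta,-1}_t\partial_i^2\ell^\beta_t - 2\nabla\mathcal{R}^\beta_t\cdot\nabla\psi$ together with the unchanged zeroth-order perturbation term, and the martingale part becomes $\nabla\mathcal{R}^\beta_t\cdot dW^\beta_t$. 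This is exactly (\ref{forward-time R}).

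The main obstacle is purely bookkeeping: unlike the backward-time semimartingale decompositions of Lemmas \ref{lem: sem. dec. l} and \ref{lem: sem. dec. R}, where the Girsanov-type correction built into $\overline{W}^{\mathbb{P}^\beta}$ absorbs the $\nabla\log p^\beta$-contribution and produces the clean $\|\nabla\ell^\beta\|^2$ and $\tfrac{1}{2}\|\nabla\mathcal{R}^\beta\|^2$ structures, the forward-time Itô expansion leaves the residual terms $\sum_i\partial_i^2\ell^\beta_t$ and $\nabla\psi\cdot\nabla\ell^\beta_t$ explicitly in the drift. These cannot be rewritten as squared gradients of $\ell^\beta_t$ or $\mathcal{R}^\beta_t$, which is precisely the computational defect of the forward-time approach that the author wishes to highlight; the proof itself is therefore just a careful verification that the cancellations produced by $e^{\pm 2\psi}$ yield the stated coefficients.
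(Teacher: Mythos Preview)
Your proposal is correct and follows essentially the same approach as the paper: derive the PDE for $\partial_t\ell^\beta_t$ from the Fokker--Planck equation (the paper records this as equation (\ref{forward-time PDE, l})), then apply It\^o's formula along the forward SDE (\ref{perturbed Itô-Langevin dynamics}) to obtain (\ref{forward-time l}), and finally take the logarithm via It\^o to get (\ref{forward-time R}). Your explicit verification of the $\|\nabla\psi\|^2\ell^\beta_t$ and $\ell^\beta_t\Delta\psi$ cancellations is more detailed than the paper's one-line computation, but the substance is identical.
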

\begin{proof}
    From the Fokker-Planck equations (\ref{perturbed Fokker-Plank}) and (\ref{Fokker-Plank for Q}), we can compute that
    \begin{equation}\label{forward-time PDE, l}
        \frac{\partial\ell^\beta_t}{\partial t}(x)=\frac{1}{2}\Delta\ell^\beta_t(x)+\nabla\ell^\beta_t\cdot(\beta I_{\{t>t_0\}}-\nabla\psi)(x)+\big(\sum\limits_{1\leq i\leq d}\frac{\partial\beta^{(i)}}{\partial x_i}-2\beta\cdot\nabla\psi\big)\ell^\beta_t(x)I_{\{t>t_0\}}.
    \end{equation}
    Via the Itô formula and that $\mathcal{R}^\beta_t(X_t)=\log\ell^\beta_t(X_t)$ for all $t\geq0$, the assertions (\ref{forward-time l}), (\ref{forward-time R}) are verified.
\end{proof}
Compared to the backward-time approach (\ref{semimartingale decomposition of R}), some extra terms show up in (\ref{forward-time R}). The analysis trajectorial behavior of the forward-time process $(\mathcal{R}^\beta_t(X_t))_{t\geq0}$ is therefore more involved. And consequently, the need of additional computation to deal with these extra terms makes the forward-time approach less transparent and eventually clouds the intuition of the phrase \textit{dissipation}.\par
We may still take $\mathbb{P}^\beta$-expectation, formally, to retrieve the classical identity of relative entropy dissipation through the forward-time approach. Indeed, after we verify the integrability of the additional term in (\ref{forward-time R}), performing the integration by parts shows that,
\begin{equation*}
    \mathbb{E}^{\mathbb{P}^\beta}\big[\sum\limits_{1\leq i\leq d}\frac{1}{\ell^\beta_t}\frac{\partial^2\ell^\beta_t}{\partial x_i^2}(X_t)-2(\nabla\mathcal{R}^\beta_t\cdot\nabla\psi)(X_t)\big]=0.
\end{equation*}
Henceforth, despite its computational complexity, this forward-time approach eventually leads to the same results, i.e.~Theorem \ref{thm: classical result, perturbed} and Corollary \ref{cor: classical result, unperturbed}. Nevertheless, we prefer to work on the interpretation to the trajectorial dynamics backward in time.

%--------------------------------

%-------------------------------

%----------------------------

%--------------------------------

%-------------------------------

%-----------------------------

%----------------------------------

%-------------------------

%--------------------------------------
%--------------------------------------

%---------------------------------------
%---------------------------------------

%------------------------------

%----------------------------------

%-----------------------------------

%------------------------------

%------------------------------

%------------------------------

%---------------------------------------

%---------------------------------------

\section{Connections to derivative in Wasserstein space}\label{sec: wasserstein}
The motivation of the Wasserstein space comes from a comparison between probability measures. In essence, the Wasserstein space is a suitably defined collection of probability measures endowed with a metric. An intuitive picture is to view each distribution as a unit amount of soil piled on ground. This metric quantifies the minimal cost of transporting one pile into the other, see Ambrosio/Gigli/Savaré \cite{Ambrosio/Gigli/Savaré}. By this analogy, the metric is known in Computer Science as the earth mover distance, see Levina/Bickel \cite{Levina/Bickel}.\par
The name, \textit{Wasserstein metric}, was coined by Dobrushin \cite{Dobrushin} after learning the work of Vaseršteĭn \cite{Vaseršteĭn} on Markov processes describing large systems of automata. Nevertheless, this metric has already been introduced by Kantorovich \cite{Kantorovich, Pass} in the context of Optimal Transport Theory. Wasserstein metric is a natural way to compare the laws of two random variables, where one is derived from the other by some perturbations, or undergoes time-evolution.\par
In our context of Itô-Langevin dynamics (\ref{perturbed Itô-Langevin dynamics}), $P^\beta_t$ and $P^\beta_{t_0}$ correspond to the marginal laws on $\mathbb{R}^d$ of $(X_t)_{t\geq t_0}$ at time $t$ and $t_0$. As previewed in Section \ref{sec: intro}, we present the time-derivative of $t\mapsto W_2(P^\beta_t,P^\beta_{t_0})$ at $t=t_0$. This limiting behavior reveals a correlation to the relative entropy dissipation via the quantity of Fisher information (\ref{Fisher information}). But first of all, let us proceed in an orderly way and start with the basic formulation of the quadratic Wasserstein space. 
%----------------------------
\subsection{Basic structure of Wasserstein space}
Let $\mathscr{P}(\mathbb{R}^d)$ denote the set of all probability measures on the Borel sets of $\mathbb{R}^d$. In this expository article, the quadratic Wasserstein space is defined to be a metric space whose elements form a subset of $\mathscr{P}(\mathbb{R}^d)$. This metric structure quantifies the \textit{distance} between probability measures on $\mathbb{R}^d$. To be precise, the elements of the quadratic Wasserstein space $\mathscr{P}_2(\mathbb{R}^d)$ consist exactly of those elements in $\mathscr{P}(\mathbb{R}^d)$ with finite second moment, i.e.
\begin{equation}\label{def, quad, W space}
    \mathscr{P}_2(\mathbb{R}^d)\coloneqq\big\{P\in\mathscr{P}(\mathbb{R}^d):\,\int_{\mathbb{R}^d}\norm{x}^2\,dP(x)<\infty\big\},
\end{equation}
together with a metric function $W_2(\cdot,\cdot):\mathscr{P}_2(\mathbb{R}^d)\times\mathscr{P}_2(\mathbb{R}^d)\to\mathbb{R}_+$, which will be specified soon.\par
On the other hand, to simplify our exposition, occasionally we identify probability density functions on $\mathbb{R}^d$ with its associated Borel probability measures. Notice that if $p(\cdot):\mathbb{R}^d\to\mathbb{R}_+$ denotes a probability density function on $\mathbb{R}^d$, then its associated probability measure,
\begin{equation}
    \label{equivalence for density in W2}
    p(x)\,dx\in\mathscr{P}_2(\mathbb{R}^d)\qquad\text{if and only if}\qquad\int_{\mathbb{R}^d}x^2p(x)\,dx<\infty.
\end{equation}
In fact, if condition (\ref{equivalence for density in W2}) is satisfied, $p(\cdot):\mathbb{R}^d\to\mathbb{R}_+$ will be identified with an element in $\mathscr{P}_2(\mathbb{R}^d)$. Readers should stay alert to this convention. But in our expository article, this should leave no ambiguity.\par
Having specified the elements of the quadratic Wasserstein space $\mathscr{P}_2(\mathbb{R}^d)$ in (\ref{def, quad, W space}), we give a precise discription to the Wasserstein metric $W_2$. First of all, we adopt some notions and terminologies from the Optimal Transport Theory \cite{Agueh, Ambrosio/Gigli/Savaré}. Given $\mu,\nu\in\mathscr{P}(\mathbb{R}^d)$, let $\Gamma(\mu,\nu)$ denote the set of Kantorovich transport plans, i.e.~probability measures $\gamma$ on the Borel sets of $\mathbb{R}^d\times\mathbb{R}^d$ with marginals $\mu$ and $\nu$. Then, $\gamma$ satisfies $\pi^1_{\#}\gamma=\gamma\circ(\pi^1)^{-1}=\mu$ and $\pi^2_{\#}\gamma=\gamma\circ(\pi^2)^{-1}=\nu$, where $\pi^i:\mathbb{R}^d\times\mathbb{R}^d\to\mathbb{R}^d$, $i=1,2$ are the canonical projections. The Wasserstein metric $W_2$ is defined by,
\begin{equation}\label{quad W, metric}
    W_2(\mu,\nu)^2\coloneqq\inf\big\{\int_{\mathbb{R}^d\times\mathbb{R}^d}\norm{x-y}^2\,d\gamma(x,y):\,\gamma\in\Gamma(\mu,\nu)\big\},\quad\text{for all}\quad\mu,\nu\in\mathscr{P}_2(\mathbb{R}^d).
\end{equation}
It is verified that $W_2$ is indeed a metric on $\mathscr{P}_2(\mathbb{R}^d)$, see Sturm \cite{Sturm1, Sturm2}. In fact, the definition (\ref{quad W, metric}) of $W_2$ gives more regularity on the metric structure of $\mathscr{P}_2(\mathbb{R}^d)$. The quadratic Wasserstein space $\mathscr{P}_2(\mathbb{R}^d)$, equipped with the metric $W_2$, is separable and completely metrizable, i.e.~a Polish space, see Ambrosio/Gigli/Savaré \cite[Proposition 7.1.5]{Ambrosio/Gigli/Savaré}, \cite{Lott, Panaretos/Zemel}.\par
The Wasserstein metric $W_2$ is furthermore compatible with a Riemannian interpretation of the Wasserstein space \cite{Otto2, Otto}. Regarded formally as a Riemannian manifold consisting of Borel probability measures on $\mathbb{R}^d$, the characterization of $W_2$ suggests the tangent bundle $T\mathscr{P}_2(\mathbb{R}^d)\coloneqq\cup_\mu T_\mu\mathscr{P}_2(\mathbb{R}^d)$ to $\mathscr{P}_2(\mathbb{R}^d)$, where
\begin{equation}\label{quad W, tangent space}
    T_\mu\mathscr{P}_2(\mathbb{R}^d)\coloneqq\overline{\big\{\nabla\varphi:\,\varphi\in\mathcal{C}^\infty_c(\mathbb{R}^d;\mathbb{R})\big\}}^{L^2(\mu)},\quad\text{for all}\quad\mu\in\mathscr{P}_2(\mathbb{R}^d).
\end{equation}
Naturally, (\ref{quad W, tangent space}) hints to a differential structure to the Wasserstein metric framework of $\mathscr{P}_2(\mathbb{R}^d)$.\par
In light of the Riemannian structure of $\mathscr{P}_2(\mathbb{R}^d)$, we can talk of the \textit{constant speed geodesic}. Indeed, this concept is studied in Differential Geometry, where any two points on a smooth manifold are connected by a unique length-minimized curve, called the \textit{geodesic} \cite{Hirsch, Lee, Tu}. In the Wasserstein space theory, (\ref{quad W, tangent space}) provides a tangent bundle to $\mathscr{P}_2(\mathbb{R}^d)$ and hence defines its manifold structure. Then, given two arbitrary probability measures in $\mathscr{P}_2(\mathbb{R}^d)$, the scheme to transport one probability measure to the other with minimal effort, i.e.~cumulative tangential distance, corresponds exactly to a geodesic on $\mathscr{P}_2(\mathbb{R}^d)$. This geodesic can be also written as a parametrized family of probability measures in $\mathscr{P}_2(\mathbb{R}^d)$.\par
Let $I\coloneqq[a,b]$ be the parameter interval. Fix $\mu_a,\mu_b\in\mathscr{P}_2(\mathbb{R}^d)$. If we can find a transport map $\mathcal{T}^G\coloneqq\nabla G:\mathbb{R}^d\to\mathbb{R}^d$ such that $\mu_b=(\mathcal{T}^G)_\#\mu_a=\mu_a\circ(\mathcal{T}^G)^{-1}$ and $G$ is convex on $\mathbb{R}^d$, then the parametrized family $(\mu_t)_{t\in I}$ in $\mathscr{P}_2(\mathbb{R}^d)$ defined by,
\begin{equation*}
    \mu_t\coloneqq(\mathcal{T}^G_t)_\#\mu_a,\qquad\text{where}\quad\mathcal{T}^G_t\coloneqq \frac{b-t}{b-a} Id_{\mathbb{R}^d}+\frac{t-a}{b-a}\nabla G,\quad\text{for all}\quad t\in[a,b],
\end{equation*}
is a curve in $\mathscr{P}_2(\mathbb{R}^d)$ connecting $\mu_a$ and $\mu_b$ which, for all $a\leq u\leq v\leq b$, satisfies,
\begin{equation}\label{optimal transport}
    W_2(\mu_u,\mu_v)=\frac{v-u}{b-a}\sqrt{\int_{\mathbb{R}^d}\normy{x-\nabla G(x)}^2\,d\mu_a(x)}=\frac{v-u}{b-a}\normy{x-\nabla G(x)}_{L^2(\mu_a)}.
\end{equation}
And this parametrized curve $(\mu_t)_{t\in I}$ in $\mathscr{P}_2(\mathbb{R}^d)$ is the constant speed geodesic from $\mu_a$ to $\mu_b$. The result (\ref{optimal transport}) is the \textit{Brenier theorem} for the Wasserstein spaces. Readers are referred to Brenier \cite[Section 3]{Brenier} and Villani \cite[Theorem 2.12]{Villani} for more details. In practice, we often construct a transport map $\mathcal{T}^G$ from a convex function $G(\cdot):\mathbb{R}^d\to\mathbb{R}$. Then, this $(\mu_t)_{t\in I}$ is automatically a constant speed geodesic in $\mathscr{P}_2(\mathbb{R}^d)$.\par
In view of the Wasserstein space theory, the family $(P^\beta_t)_{t\geq0}$ can be equivalently seen as a parametrized curve in the manifold $\mathscr{P}_2(\mathbb{R}^d)$, in light of Lemma \ref{lem: strong solution, perturbed Itô-Langevin}. And we further impose some regularity conditions on the potential $\psi(\cdot)$ which drives the Itô-Langevin dynamics (\ref{perturbed Itô-Langevin dynamics}) in this context. At each $t\geq0$, we assume that $\psi(\cdot)$ is chosen such that there exists a sequence of functions $(\varphi^{0,(m)}_t(\cdot))_{m\in\mathbb{N}}$ of class $\mathcal{C}^\infty(\mathbb{R}^d;\mathbb{R})$ with compact support, whose gradients $(\nabla\varphi^{0,(m)}_t(\cdot))_{m\in\mathbb{N}}$ admit the mean square convergence
    \[
        \nabla\varphi^{0,(m)}_t(\cdot)\xrightarrow{\,L^2(\mathbb{R}^d,P^0_t)\,}V^0_t(\cdot)\coloneqq\nabla\varphi^0_t(\cdot)\quad\text{as}\quad m\to\infty,
    \]
    where the time-dependent velocity field $V^0_t(\cdot)$ is of gradient type with $\varphi^0_t(\cdot)\coloneqq-\psi(\cdot)-\tfrac{1}{2}\log p^0_t(\cdot)$. Here, $P^0_t$ corresponds to the unperturbed marginal distribution of $X_t$ in (\ref{perturbed Itô-Langevin dynamics}) at $t\geq0$. In particular, at $t_0$,
    \begin{equation*}
        V^0_{t_0}(\cdot)\in T_{P^0_{t_0}}\mathscr{P}_2(\mathbb{R}^d)=\overline{\big\{\nabla\varphi(\cdot):\,\varphi\in\mathcal{C}_c^\infty(\mathbb{R}^d,\mathbb{R})\big\}}^{L^2(P^0_{t_0})}.
    \end{equation*}
    When there is perturbation, denote by $V^\beta_t(\cdot)\coloneqq\nabla\varphi^\beta_t(\cdot)$ with $\varphi^\beta_t(\cdot)\coloneqq-(\psi+BI_{\{t>t_0\}})(\cdot)-\tfrac{1}{2}\log p^\beta_t(\cdot)$. Since $\beta(\cdot)$ is of gradient type, i.e.~$\beta=\nabla B$, where $B(\cdot)$ is smooth and compactly supported, it is clear that
    \begin{equation}\label{tangent space condition}
        V^\beta_{t_0}(\cdot)\in\text{T}_{P^0_{t_0}}\mathscr{P}_2(\mathbb{R}^d)=\overline{\big\{(\nabla\varphi+\beta)(\cdot):\,\varphi\in\mathcal{C}_c^\infty(\mathbb{R}^d,\mathbb{R})\big\}}^{L^2(P^0_{t_0})}.
    \end{equation}
In practice, the expression (\ref{tangent space condition}) ensures we could find a sequence of compactly supported smooth vector fields to approximate $V^\beta_{t_0}(\cdot)$, all of which are of gradient type.
%---------------------------
\subsection{Local behavior of Wasserstein metric} Having introduced the Wasserstein spaces, let us turn to the limiting behavior of $(t-t_0)^{-1}W_2(P^\beta_{t},P^\beta_{t_0})$ as $t\searrow t_0$. This limiting identity is stated in Theorem \ref{w-space displacement}. When the perturbation vanishes, this limiting identity reduces to an expression of the Fisher information quantity (\ref{Fisher information}) and is therefore correlated to the time-derivative (\ref{perturbed classical entropy decay, derivative}) of the relative entropy.\par
For the clarity of our exposition, we first claim that a family of random variables are $\mathbb{P}^\beta$-uniformly integrable. An inspection of (\ref{family of r.v.}) tells that these random variables are all functionals of the velocity field $(V^\beta_t(\cdot))_{t\geq0}$. And their uniform integrability is important to the Lemma \ref{lem, 6.2}. Readers are encouraged to go through the proof of Lemma \ref{lem: 6.1}, but it is also fine to skip its proof in first reading of this section.
%---------------------------------------
%lemma 6.1
\begin{lemma}\label{lem: 6.1}
    The family of random variables,
    \begin{equation}\label{family of r.v.}
        \bigg(\normy{\frac{1}{t-t_0}\int_{t_0}^tV^\beta_\theta(X_\theta)\,d\theta-V^\beta_{t_0}(X_{t_0})}^2\bigg)_{t\geq t_0},
    \end{equation}
    is $\mathbb{P}^\beta$-uniformly integrable.
\end{lemma}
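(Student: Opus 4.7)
The plan is to reduce uniform integrability to two ingredients: an explicit representation of the velocity field in terms of $\nabla\mathcal{R}^\beta$ and the bounded perturbation $\beta(\cdot)$, together with the convergence of the family in $L^1(\mathbb{P}^\beta)$ to zero as $t\searrow t_0$. The first exploits the integrability already secured by Lemma \ref{lem: M is L2 martingale}; the second invokes the fact that any nonnegative family convergent in $L^1$ to zero is automatically uniformly integrable in a neighbourhood of the limit.

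First I would unfold the definitions. The choice $\varphi^\beta_t=-(\psi+BI_{\{t>t_0\}})-\tfrac{1}{2}\log p^\beta_t$, combined with the identity $\nabla\mathcal{R}^\beta_t=\nabla\log p^\beta_t+2\nabla\psi$, yields the clean representation
\[
V^\beta_\theta(X_\theta)\;=\;-\tfrac{1}{2}\nabla\mathcal{R}^\beta_\theta(X_\theta)\;-\;\beta(X_\theta)\,I_{\{\theta>t_0\}}.
\]
Because $\beta(\cdot)=\nabla B(\cdot)$ is smooth with compact support, $\|\beta(\cdot)\|$ is bounded by an absolute constant, so the $\beta$-contribution is trivially uniformly controlled. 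Applying the triangle inequality and Jensen's inequality (first to the square of the Euclidean norm, then to the time-average) gives the $\mathbb{P}^\beta$-a.s.\ estimate
\[
\bigg\|\frac{1}{t-t_0}\int_{t_0}^t V^\beta_\theta(X_\theta)\,d\theta-V^\beta_{t_0}(X_{t_0})\bigg\|^2\;\leq\;\frac{1}{t-t_0}\int_{t_0}^t\|V^\beta_\theta(X_\theta)-V^\beta_{t_0}(X_{t_0})\|^2\,d\theta.
\]
Taking $\mathbb{P}^\beta$-expectation and invoking continuity of $\theta\mapsto\nabla\mathcal{R}^\beta_\theta(X_\theta)$ in $L^2(\mathbb{P}^\beta)$ at $\theta=t_0$ (granted by the smoothness of $(t,x)\mapsto p^\beta_t(x)$ via the Fokker--Planck equation (\ref{perturbed Fokker-Plank}), together with dominated convergence using the $L^1$-bound of Lemma \ref{lem: M is L2 martingale}), I would deduce
\[
\mathbb{E}^{\mathbb{P}^\beta}\bigg[\bigg\|\frac{1}{t-t_0}\int_{t_0}^t V^\beta_\theta(X_\theta)\,d\theta-V^\beta_{t_0}(X_{t_0})\bigg\|^2\bigg]\;\longrightarrow\;0\qquad\text{as}\quad t\searrow t_0.
\]

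The main obstacle is establishing genuine uniform integrability over the full range $t\geq t_0$, not just in the limit $t\searrow t_0$. The $L^1$-to-zero convergence above directly handles the family on any half-neighbourhood $(t_0,t_0+\delta]$, because an $L^1$-convergent family of nonnegative random variables is uniformly integrable on the convergent tail. For $t\geq t_0+\delta$ one must separately bound $\mathbb{E}^{\mathbb{P}^\beta}[\|Z_t\|^2]$ uniformly: Jensen's inequality shows it suffices to control $\mathbb{E}^{\mathbb{P}^\beta}[\int_{t_0}^t\|V^\beta_\theta(X_\theta)\|^2\,d\theta]$ uniformly in $t$, and using the velocity representation this reduces to the finiteness of the cumulative Fisher information bound from Lemma \ref{lem: M is L2 martingale} (applied over the relevant horizon) plus the $L^2(\mathbb{P}^\beta)$-boundedness of the reference vector $V^\beta_{t_0}(X_{t_0})$ afforded by the membership $V^\beta_{t_0}(\cdot)\in T_{P^0_{t_0}}\mathscr{P}_2(\mathbb{R}^d)$ in (\ref{tangent space condition}). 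Patching the two regimes together yields the claim.
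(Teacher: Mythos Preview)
Your overall architecture---reduce via Jensen, then argue $L^1(\mathbb{P}^\beta)$-convergence near $t_0$ to obtain uniform integrability---is the same as the paper's. The gap is in how you justify the $L^1$-convergence. You claim $L^2(\mathbb{P}^\beta)$-continuity of $\theta\mapsto\nabla\mathcal{R}^\beta_\theta(X_\theta)$ at $\theta=t_0$ ``by dominated convergence using the $L^1$-bound of Lemma~\ref{lem: M is L2 martingale}''. But that lemma only yields the \emph{time-integrated} bound $\mathbb{E}^{\mathbb{P}^\beta}\big[\int_0^T\|\nabla\mathcal{R}^\beta_{T-\theta}(X_{T-\theta})\|^2\,d\theta\big]<\infty$; it provides no $\mathbb{P}^\beta$-integrable random variable dominating $\|\nabla\mathcal{R}^\beta_\theta(X_\theta)\|^2$ uniformly in $\theta$ near $t_0$, so dominated convergence does not apply. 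The paper fills this gap by a different route: after reducing to the family $\big(\tfrac{1}{t-t_0}\int_{t_0}^t\|\nabla\mathcal{R}^\beta_\theta(X_\theta)\|^2\,d\theta\big)_{t>t_0}$, it first records the $\mathbb{P}^\beta$-a.s.\ convergence to $\|\nabla\mathcal{R}^0_{t_0}(X_{t_0})\|^2$ coming from pathwise continuity, then separately checks convergence of the expectations via (\ref{first term}), and finally applies Scheff\'e's lemma to upgrade to $L^1(\mathbb{P}^\beta)$-convergence. This ``a.s.\ $+$ means $\Rightarrow$ $L^1$'' step is exactly what your argument is missing.

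A smaller point: in the regime $t\geq t_0+\delta$, bounding $\mathbb{E}^{\mathbb{P}^\beta}[\,\cdot\,]$ uniformly via the cumulative Fisher information gives only $L^1$-boundedness, which is strictly weaker than uniform integrability. (The paper does not treat this regime either; its proof, like yours, really only establishes UI along sequences $t\searrow t_0$, which suffices for the application in Lemma~\ref{lem, 6.2}.)
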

\begin{proof}
    Notice that for each $t\geq0$, the velocity vector $V^\beta_t(\cdot)$ is of class $\mathcal{C}^\infty(\mathbb{R}^d;\mathbb{R}^d)$ with compact support. Hence we know $\normx{V^\beta_{t}(X_{t})}^2\in L^1(\mathbb{P}^\beta)$ for all $t\geq0$, and by the Jensen inequality we have
    \begin{equation*}
        \normy{\frac{1}{t-t_0}\int_{t_0}^tV^\beta_\theta(X_\theta)\,d\theta}^2\leq\frac{1}{t-t_0}\int_{t_0}^t\normy{V^\beta_\theta(X_\theta)}^2\,d\theta.
    \end{equation*}
    It then suffices to prove the uniform integrability of the family
    \begin{equation*}
        \bigg(\frac{1}{t-t_0}\int_{t_0}^t\normy{V^\beta_\theta(X_\theta)}^2\,d\theta\bigg)_{t\geq t_0}.
    \end{equation*}
    Invoking the definition of the velocity field $V^\beta_t(\cdot)$ and that $\beta(\cdot)$ is smooth with compact support,
    \begin{equation*}
        \bigg(\frac{1}{t-t_0}\int_{t_0}^t\normy{V^\beta_\theta(X_\theta)}^2\,d\theta\bigg)_{t\geq t_0}\quad\text{is U.I. if and only if}\quad\bigg(\frac{1}{t-t_0}\int_{t_0}^t\normy{\nabla\mathcal{R}^\beta_\theta(X_\theta)}^2\,d\theta\bigg)_{t\geq t_0}\quad\text{is U.I.,}
    \end{equation*}
    where U.I. abbreviates the phrase \textit{uniformly integrable}. By continuity of the sample paths of $(X_t)_{t\geq0}$,
    \begin{equation*}
        \frac{1}{t-t_0}\int_{t_0}^t\normy{\nabla\mathcal{R}^\beta_\theta(X_\theta)}^2\,d\theta\to\normy{\nabla\mathcal{R}^0_{t_0}(X_{t_0})}^2\;\;\text{as}\quad t\to t_0,\quad\mathbb{P}^\beta\text{-a.s.}
    \end{equation*}
    Since $L^1(\mathbb{P}^\beta)$ convergence implies $\mathbb{P}^\beta$-uniform integrability, it suffices to check the convergence of their $\mathbb{P}^\beta$-expectation by the Scheffé lemma. In fact, using (\ref{first term}), we ascertain this claim. 
\end{proof}
The $\mathbb{P}^\beta$-uniform integrability of the random variables (\ref{family of r.v.}) is necessary to the proof of Lemma \ref{lem, 6.2}, where we transfer the $\mathbb{P}^0$-a.s. convergence of a sequence of random variables to their corresponding $L^2(\mathbb{P}^0)$-convergence in (\ref{eqn: 6.8}).
%---------------------------------------
%lemma 6.2
\begin{lemma}\label{lem, 6.2}
    The velocity field $(V^\beta_t(\cdot))_{t\geq t_0}$ induces a curved flow $(\mathcal{L}^\beta_t)_{t\geq t_0}$, characterized by
    \begin{equation*}
        \mathcal{L}^\beta_{t_0}=Id_{\mathbb{R}^d}\qquad\text{and}\qquad\frac{d}{dt}\mathcal{L}^\beta_t=V^\beta_t(\mathcal{L}^\beta_t),\quad\text{for all}\quad t\geq t_0.
    \end{equation*}
    Then, for all $t\geq t_0$, $(\mathcal{L}^\beta_t)_\#P^\beta_{t_0}=P^\beta_t$, i.e.~the map $\mathcal{L}^\beta_t:\mathbb{R}^d\to\mathbb{R}^d$ transports the probability measure $P^\beta_{t_0}=P^0_{t_0}$ to the probability measure $P^\beta_t$. Moreover,
    \begin{equation}\label{eqn: 6.8}
        \lim\limits_{t\searrow t_0}\frac{1}{t-t_0}\mathbb{E}^{\mathbb{P}^0}\big[\normy{\mathcal{L}^\beta_t(X_{t_0})-X_{t_0}-(t-t_0)V^\beta_{t_0}(X_{t_0})}^2\big]^{1/2}=0.
    \end{equation}
\end{lemma}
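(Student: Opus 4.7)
The plan is to split the statement into two assertions—the push-forward identity $(\mathcal{L}^\beta_t)_\#P^\beta_{t_0}=P^\beta_t$ and the Taylor-type estimate (\ref{eqn: 6.8})—and attack them in that order. For the first, I would verify that the pair $(p^\beta_t,V^\beta_t)$ satisfies the continuity equation $\partial_t p^\beta_t+\nabla\cdot(p^\beta_t V^\beta_t)=0$ for $t>t_0$. Using the definition $V^\beta_t=-\nabla(\psi+BI_{\{t>t_0\}})-\tfrac12\nabla\log p^\beta_t$, a direct algebraic manipulation yields $p^\beta_t V^\beta_t=-p^\beta_t\nabla(\psi+BI_{\{t>t_0\}})-\tfrac12\nabla p^\beta_t$; taking divergence and comparing to the Fokker--Planck equation (\ref{perturbed Fokker-Plank}) gives the claim. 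Standard ODE theory ensures that $\mathcal{L}^\beta_t$ is well-defined as a diffeomorphism on $\mathbb{R}^d$ by smoothness of $V^\beta_t$ on compact sets, so that the method of characteristics translates the continuity equation into the push-forward identity.

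For (\ref{eqn: 6.8}), I would start from the flow identity
\begin{equation*}
\mathcal{L}^\beta_t(X_{t_0})-X_{t_0}-(t-t_0)V^\beta_{t_0}(X_{t_0})=\int_{t_0}^{t}\big[V^\beta_\theta(\mathcal{L}^\beta_\theta(X_{t_0}))-V^\beta_{t_0}(X_{t_0})\big]\,d\theta,
\end{equation*}
and use Cauchy--Schwarz in time to bound the squared norm by $(t-t_0)\int_{t_0}^{t}\|V^\beta_\theta(\mathcal{L}^\beta_\theta(X_{t_0}))-V^\beta_{t_0}(X_{t_0})\|^2\,d\theta$. For $\mathbb{P}^0$-a.e.\ trajectory, right-continuity of $\theta\mapsto V^\beta_\theta$ at $t_0$ and continuity of $\theta\mapsto\mathcal{L}^\beta_\theta$ show the integrand tends to zero as $\theta\searrow t_0$, so the time-averaged integral tends to zero $\mathbb{P}^0$-a.s.

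To pass from $\mathbb{P}^0$-a.s.\ convergence to $L^2(\mathbb{P}^0)$ convergence, I would exploit the push-forward identity just established: the marginal law of $\mathcal{L}^\beta_\theta(X_{t_0})$ under $\mathbb{P}^0$ coincides with the marginal law of $X_\theta$ under $\mathbb{P}^\beta$, so after a Fubini swap between $\mathbb{E}^{\mathbb{P}^0}$ and $\int_{t_0}^{t}d\theta$ the outer expectation reduces, term by $\theta$, to $\mathbb{P}^\beta$-expectations of $\|V^\beta_\theta(X_\theta)-V^\beta_{t_0}(X_{t_0})\|^2$. The $\mathbb{P}^\beta$-uniform integrability supplied by Lemma \ref{lem: 6.1} then provides the envelope for a Vitali/Scheff\'e-type argument that upgrades the almost-sure convergence to $L^2(\mathbb{P}^0)$.

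The hard part will be the uniform integrability transfer. Marginal matching gives sharp control of one-point expectations, but the joint law in $\theta$ of the deterministic characteristic flow $\mathcal{L}^\beta_\theta(X_{t_0})$ under $\mathbb{P}^0$ differs from the joint law of the stochastic trajectory $(X_\theta)_{\theta\ge t_0}$ under $\mathbb{P}^\beta$, so a direct domination along trajectories is unavailable. I would circumvent this by exploiting nonnegativity of the integrand, performing the Fubini swap, and reducing the problem to the marginal-wise $L^1$-continuity of $\theta\mapsto\mathbb{E}^{\mathbb{P}^\beta}[\|V^\beta_\theta(X_\theta)\|^2]$ at $\theta=t_0$, which is already available from the convergence $\mathbb{P}^\beta$-a.s.\ plus the uniform integrability established in the proof of Lemma \ref{lem: 6.1}.
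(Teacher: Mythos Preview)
Your strategy for both assertions mirrors the paper's. For the push-forward identity the paper states the result without argument, and your continuity-equation route is the standard justification. For (\ref{eqn: 6.8}) you and the paper both start from the integrated flow identity, invoke almost-sure convergence, and appeal to Lemma~\ref{lem: 6.1} to upgrade to convergence in mean.

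There is one slip in your bridging step. After the Fubini swap you obtain $\int_{t_0}^{t}\mathbb{E}^{\mathbb{P}^0}\big[\|V^\beta_\theta(\mathcal{L}^\beta_\theta(X_{t_0}))-V^\beta_{t_0}(X_{t_0})\|^2\big]\,d\theta$, but this does \emph{not} reduce term-by-term to $\mathbb{E}^{\mathbb{P}^\beta}\big[\|V^\beta_\theta(X_\theta)-V^\beta_{t_0}(X_{t_0})\|^2\big]$ as you claim: marginal matching controls the law of $\mathcal{L}^\beta_\theta(X_{t_0})$ alone, whereas the integrand depends on the \emph{pair} $(\mathcal{L}^\beta_\theta(X_{t_0}),X_{t_0})$, and that coupling under $\mathbb{P}^0$ (a deterministic transport of a single random point) is not the coupling of $(X_\theta,X_{t_0})$ under $\mathbb{P}^\beta$. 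You correctly flag exactly this obstacle in your final paragraph; the resolution you hint at there is the right one, but it must be applied \emph{before} any identification of laws. Split via $\|a-b\|^2\le 2\|a\|^2+2\|b\|^2$ first; each summand then involves a single marginal, so push-forward gives $\mathbb{E}^{\mathbb{P}^0}\big[\|V^\beta_\theta(\mathcal{L}^\beta_\theta(X_{t_0}))\|^2\big]=\mathbb{E}^{\mathbb{P}^\beta}\big[\|V^\beta_\theta(X_\theta)\|^2\big]$ exactly, and the $L^1$-continuity at $\theta=t_0$ supplied by the Scheff\'e argument inside Lemma~\ref{lem: 6.1} yields uniform integrability of the dominating family under $\mathbb{P}^0$, closing the Vitali step.

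The paper handles this passage more tersely, citing Lemma~\ref{lem: 5.1} to replace $\mathcal{L}^\beta_\theta(X_{t_0})$ under $\mathbb{P}^0$ directly by $X_\theta$ under $\mathbb{P}^\beta$; your more explicit treatment of the coupling issue is, once repaired as above, an improvement in rigor over what the paper writes.
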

\begin{proof}
    First note that 
    \begin{equation*}
        \mathcal{L}^\beta_t(x)=x+\int_{t_0}^tV^\beta_\theta(\mathcal{L}^\beta_\theta(x))\,d\theta,\quad\text{for all}\quad(t,x)\in[t_0,\infty)\times\mathbb{R}^d.
    \end{equation*}
    On this account, for all $t\geq t_0$,
    \begin{equation*}
        \mathbb{E}^{\mathbb{P}^0}\big[\normy{\mathcal{L}^\beta_t(X_{t_0})-X_{t_0}-(t-t_0)V^\beta_{t_0}(X_{t_0})}^2\big]=\mathbb{E}^{\mathbb{P}^0}\big[\normy{\int_{t_0}^tV^\beta_\theta(\mathcal{L}^\beta_\theta(x))\,d\theta-(t-t_0)V^\beta_{t_0}(X_{t_0})}^2\big].
    \end{equation*}
    In light of Lemma \ref{lem: 5.1}, $\mathbb{P}^\beta$ and $\mathbb{P}^0$ are mutually absolutely continuous with uniformly bounded density process. Hence, to verify (\ref{eqn: 6.8}), it suffices to show the limiting assertion,
    \begin{equation*}
        \lim\limits_{t\searrow t_0}\frac{1}{t-t_0}\mathbb{E}^{\mathbb{P}^\beta}\big[\normy{\int_{t_0}^tV^\beta_\theta(X_\theta)\,d\theta-(t-t_0)V^\beta_{t_0}(X_{t_0})}^2\big]^{1/2}=0.
    \end{equation*}
    By the continuity of the sample paths of $(X_t)_{t\geq0}$,
    \begin{equation*}
        \normy{\frac{1}{t-t_0}\int_{t_0}^tV^\beta_\theta(X_\theta)\,d\theta-V^\beta_{t_0}(X_{t_0})}^2\to0\quad\text{as}\quad t\to t_0,\quad\mathbb{P}^\beta\text{-a.s.}
    \end{equation*}
    By the $\mathbb{P}^\beta$-uniform integrability in Lemma \ref{lem: 6.1}, the convergence still holds after taking $\mathbb{P}^\beta$-expectation.
\end{proof}
The limiting assertion (\ref{eqn: 6.8}) to the non-optimal transport plan $(\mathcal{L}^\beta_t)_{t\geq t_0}$ will be used in Theorem \ref{w-space displacement}, where we decompose the transport from $P^\beta_{t_0}$ to $P^\beta_t$ into a composition of a sequence of optimal transport plans $(\mathcal{J}^{\beta,(m)}_t)_{t\geq t_0, m\in\mathbb{N}}$ and the non-optimal transport $(\mathcal{L}^\beta_t)_{t\geq t_0}$.
%--------------------------------
%theorem 6.3
\begin{theorem}\label{w-space displacement}
    We have the local limiting behavior of the quadratic Wasserstein metric,
    \begin{equation}\label{eqn, w-space distance}
        \lim\limits_{t\searrow t_0}\frac{1}{t-t_0}W_2(P^\beta_t,P^\beta_{t_0})=\frac{1}{2}\normy{\nabla\mathcal{R}^0_{t_0}(X_{t_0})+2\beta(X_{t_0})}_{L^2(\mathbb{P}^0)}.
    \end{equation}
\end{theorem}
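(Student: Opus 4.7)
The plan is first to identify the right-hand side of \eqref{eqn, w-space distance} as the $L^2(\mathbb{P}^0)$-norm of the right-limit velocity field $V^\beta_{t_0}(X_{t_0})$. Unwinding the definitions $\nabla\mathcal{R}^\beta_t = \nabla\log p^\beta_t + 2\nabla\psi$ and $V^\beta_t = -\nabla\psi - \beta I_{\{t>t_0\}} - \tfrac{1}{2}\nabla\log p^\beta_t$ gives, in the right-limit sense, $V^\beta_{t_0}(X_{t_0}) = -\tfrac{1}{2}\nabla\mathcal{R}^0_{t_0}(X_{t_0}) - \beta(X_{t_0})$, whose $L^2(\mathbb{P}^0)$-norm equals $\tfrac{1}{2}\normy{\nabla\mathcal{R}^0_{t_0}(X_{t_0}) + 2\beta(X_{t_0})}_{L^2(\mathbb{P}^0)}$, using $P^\beta_{t_0}=P^0_{t_0}$. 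It then suffices to establish matching upper and lower bounds on $(t-t_0)^{-1}W_2(P^\beta_t, P^\beta_{t_0})$ as $t \searrow t_0$.

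For the upper bound, the flow $(\mathcal{L}^\beta_t)_{t \geq t_0}$ from Lemma \ref{lem, 6.2} pushes $P^\beta_{t_0}$ forward to $P^\beta_t$, so the map $X_{t_0} \mapsto (X_{t_0}, \mathcal{L}^\beta_t(X_{t_0}))$ under $\mathbb{P}^0$ realizes an admissible Kantorovich coupling between $P^\beta_{t_0}$ and $P^\beta_t$. The definition \eqref{quad W, metric} immediately yields $W_2(P^\beta_t, P^\beta_{t_0})^2 \leq \mathbb{E}^{\mathbb{P}^0}[\normy{\mathcal{L}^\beta_t(X_{t_0}) - X_{t_0}}^2]$. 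Dividing by $(t-t_0)^2$ and invoking the $L^2(\mathbb{P}^0)$-convergence \eqref{eqn: 6.8} of Lemma \ref{lem, 6.2} produces $\limsup_{t \searrow t_0}(t-t_0)^{-1} W_2(P^\beta_t, P^\beta_{t_0}) \leq \normy{V^\beta_{t_0}(X_{t_0})}_{L^2(\mathbb{P}^0)}$.

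For the lower bound, I would exploit the tangent space characterization \eqref{tangent space condition} to pick a sequence $(\varphi^{(m)})_{m\in\mathbb{N}}\subset \mathcal{C}^\infty_c(\mathbb{R}^d,\mathbb{R})$ with $\nabla\varphi^{(m)} + \beta \to V^\beta_{t_0}$ in $L^2(P^0_{t_0})$. For each $m$, set $\mathcal{J}^{\beta,(m)}_t(x) \coloneqq x + (t-t_0)(\nabla\varphi^{(m)} + \beta)(x) = \nabla G^{(m)}_t(x)$ where $G^{(m)}_t(x) \coloneqq \tfrac{1}{2}\normx{x}^2 + (t-t_0)(\varphi^{(m)} + B)(x)$; since $\mathrm{Hess}(\varphi^{(m)} + B)$ is uniformly bounded, $G^{(m)}_t$ is convex for $t-t_0 \in (0, \delta_m)$ with $\delta_m$ sufficiently small. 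The Brenier characterization \eqref{optimal transport} then gives $W_2(P^0_{t_0}, (\mathcal{J}^{\beta,(m)}_t)_\# P^0_{t_0}) = (t-t_0)\normy{\nabla\varphi^{(m)} + \beta}_{L^2(P^0_{t_0})}$, while the joint push-forward $X_{t_0} \mapsto (\mathcal{L}^\beta_t(X_{t_0}), \mathcal{J}^{\beta,(m)}_t(X_{t_0}))$ bounds $W_2(P^\beta_t, (\mathcal{J}^{\beta,(m)}_t)_\# P^0_{t_0})^2 \leq \mathbb{E}^{\mathbb{P}^0}[\normy{\mathcal{L}^\beta_t(X_{t_0}) - \mathcal{J}^{\beta,(m)}_t(X_{t_0})}^2]$. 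The reverse triangle inequality $W_2(P^\beta_t, P^\beta_{t_0}) \geq W_2(P^0_{t_0}, (\mathcal{J}^{\beta,(m)}_t)_\# P^0_{t_0}) - W_2(P^\beta_t, (\mathcal{J}^{\beta,(m)}_t)_\# P^0_{t_0})$, divided by $t - t_0$, with $\liminf_{t \searrow t_0}$ applied using \eqref{eqn: 6.8}, yields a lower bound of $\normy{\nabla\varphi^{(m)} + \beta}_{L^2(P^0_{t_0})} - \normy{V^\beta_{t_0} - (\nabla\varphi^{(m)} + \beta)}_{L^2(P^0_{t_0})}$; sending $m\to\infty$ then matches the upper bound.

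The main obstacle lies in the double-limit structure of the lower bound: Brenier's identity only holds once $G^{(m)}_t$ is convex, which requires $t-t_0 < \delta_m$, and $\delta_m$ may shrink as the approximation becomes rougher. The argument is therefore necessarily iterated: first pass to the $\liminf$ in $t$ at fixed $m$ on a window where the Brenier identity holds, then let $m \to \infty$ using that both $\normy{\nabla\varphi^{(m)} + \beta}_{L^2(P^0_{t_0})} \to \normy{V^\beta_{t_0}}_{L^2(P^0_{t_0})}$ and $\normy{V^\beta_{t_0} - (\nabla\varphi^{(m)} + \beta)}_{L^2(P^0_{t_0})} \to 0$ to close the approximation. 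A subsidiary technical point is verifying the $L^2(\mathbb{P}^0)$-convergence of $(t-t_0)^{-1}(\mathcal{L}^\beta_t(X_{t_0}) - \mathcal{J}^{\beta,(m)}_t(X_{t_0}))$, which follows from Lemma \ref{lem, 6.2} together with the fact that $\mathcal{J}^{\beta,(m)}_t$ is affine in $t$ with slope $(\nabla\varphi^{(m)} + \beta)(X_{t_0})$.
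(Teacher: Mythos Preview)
Your proposal is correct and follows essentially the same route as the paper: approximate $V^\beta_{t_0}$ by gradients of $\mathcal{C}^\infty_c$ functions via \eqref{tangent space condition}, use Brenier's theorem on the induced linear transports $\mathcal{J}^{\beta,(m)}_t$, and control the error $W_2(P^\beta_t,(\mathcal{J}^{\beta,(m)}_t)_\# P_{t_0})$ through the coupling $X_{t_0}\mapsto(\mathcal{L}^\beta_t(X_{t_0}),\mathcal{J}^{\beta,(m)}_t(X_{t_0}))$ and Lemma~\ref{lem, 6.2}. The only noticeable difference is that you obtain the upper bound directly from the (generally non-optimal) coupling furnished by $\mathcal{L}^\beta_t$ itself, whereas the paper extracts both the upper and the lower bound from the two directions of the triangle inequality through the intermediate measures $P^{\beta,(m)}_{\mathcal{J}_t}$; your version is a slight streamlining but the substance is the same.
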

\begin{proof}
    According to (\ref{tangent space condition}), there exists a sequence of compactly supported functions $(\varphi^{\beta,(m)}_{t_0}(\cdot))_{m\in\mathbb{N}}$ of class $\mathcal{C}^\infty(\mathbb{R}^d;\mathbb{R})$, such that
    \begin{equation}\label{L2 limit of velocity vector}
        \lim\limits_{m\to\infty}\mathbb{E}^{\mathbb{P}^0}\big[\normy{V^\beta_{t_0}(X_{t_0})-\nabla\varphi^{\beta,(m)}_{t_0}(X_{t_0})}^2\big]=0.
    \end{equation}
    We call the gradients $(\nabla\varphi^{\beta,(m)}_{t_0}(\cdot))_{m\in\mathbb{N}}$ the \textit{localized gradient fields}, which have compact support and approximate the velocity field $V^\beta_{t_0}(\cdot)$ in $L^2(\mathbb{P}^0)$. These localized gradient fields induce a sequence of \textit{localized linear transports} $(\mathcal{J}^{\beta,(m)}_t)_{t\geq t_0,m\in\mathbb{N}}$, defined by
    \begin{equation*}
        \mathcal{J}^{\beta,(m)}_t(x)\coloneqq x+(t-t_0)\nabla\varphi^{\beta,(m)}_{t_0}(x)\quad\text{for all}\quad x\in\mathbb{R}^d,\quad t\geq t_0,\quad\text{and}\quad m\in\mathbb{N}.
    \end{equation*}
    Denote by $P^{\beta,(m)}_{\mathcal{J}_t}$ the transport image of $P_{t_0}$ under $\mathcal{J}^{\beta,(m)}_t$, i.e.~$P^{\beta,(m)}_{\mathcal{J}_t}=(\mathcal{J}^{\beta,(m)}_t)_\# P_{t_0}$ for all $t\geq t_0$ and $m\in\mathbb{N}$. We claim that
    \begin{equation}\label{optimal local transport}
        \lim\limits_{t\searrow t_0}\frac{1}{t-t_0}W_2(P^{\beta,(m)}_{\mathcal{J}_t},P_{t_0})=\normy{\nabla\varphi^{\beta,(m)}_{t_0}(X_{t_0})}_{L^2(\mathbb{P}^0)}.
    \end{equation}
    In order to deduce (\ref{optimal local transport}), we have to show that $\mathcal{J}^{\beta,(m)}_t(\cdot):\mathbb{R}^d\to\mathbb{R}^d$ is the gradient of a convex function, for all $t\geq t_0$ sufficiently close to $t_0$. From its definition,
    \begin{equation*}
        \mathcal{J}^{\beta,(m)}_t(x)=\nabla\big(\frac{1}{2}\norm{x}^2+(t-t_0)\varphi^{\beta,(m)}_{t_0}(x)\big)\quad\text{for all}\quad x\in\mathbb{R}^d.
    \end{equation*}
    Hence, it suffices to show that $\tfrac{1}{2}\norm{\cdot}^2+(t-t_0)\varphi^{\beta,(m)}_{t_0}(\cdot)$ is convex for all $m\in\mathbb{N}$, when $t\geq t_0$ is close enough to $t_0$. Its Hessian matrix is given by,
    \begin{equation}\label{hessioan of matrix phi}
        Id_{\mathbb{R}^d}+(t-t_0)\text{Hess}(\varphi^{\beta,(m)}_{t_0})(x),\quad\text{for all}\quad x\in\mathbb{R}^d.
    \end{equation}
    Since $\varphi^{\beta,(m)}_{t_0}(\cdot)$ is smooth with compact support, there exists $\epsilon_m>0$ such that (\ref{hessioan of matrix phi}) is positive definite for all $t_0\leq t\leq t_0+\epsilon_m$, uniformly in $x\in\mathbb{R}^d$. Hence, for each $m\in\mathbb{N}$, $\mathcal{J}^{\beta,(m)}_t(\cdot)$ is indeed the gradient of a convex function when $t_0\leq t\leq t_0+\epsilon_m$. And (\ref{optimal local transport}) follows from the Brenier theorem, \cite[Section 3]{Brenier}, \cite[Theorem 2.12]{Villani}. Invoking (\ref{L2 limit of velocity vector}),
    \begin{equation}\label{w limit 1}
        \lim\limits_{m\to\infty}\lim\limits_{t\searrow t_0}\frac{1}{t-t_0}W_2(P^{\beta,(m)}_{\mathcal{J}_t},P_{t_0})=\normy{V^\beta_{t_0}(X_{t_0})}_{L^2(\mathbb{P}^0)}=\frac{1}{2}\normy{\nabla\mathcal{R}^0_{t_0}(X_{t_0})+2\beta(X_{t_0})}_{L^2(\mathbb{P}^0)}.
    \end{equation}
    Our next step is to show that
    \begin{equation}\label{w limit 2}
        \lim\limits_{m\to\infty}\lim\limits_{t\searrow t_0}\frac{1}{t-t_0}W_2(P^\beta_t,P^{\beta,(m)}_{\mathcal{J}_t})=0.
    \end{equation}
    To achieve this, we construct a transport plan from $P^{\beta,(m)}_{\mathcal{J}_t}$ to $P^\beta_t$. In Lemma \ref{lem, 6.2}, we have the non-optimal transport $\mathcal{L}^\beta_t$ with $(\mathcal{L}^\beta_t)_\#P_{t_0}=P^\beta_t$. And we have the localized linear transport $\mathcal{J}^{\beta,(m)}_t$ with $(\mathcal{J}^{\beta,(m)}_t)_\# P_{t_0}=P^{\beta,(m)}_{\mathcal{J}_t}$. To this end, let $\mathcal{H}^{\beta,(m)}_t\coloneqq\mathcal{L}^\beta_t\circ(\mathcal{J}^{\beta,(m)}_t)^{-1}$, whence $(\mathcal{H}^{\beta,(m)}_t)_\#P^{\beta,(m)}_{\mathcal{J}_t}=P^\beta_t$ for all $t\geq t_0$. Let $\mathbb{P}^{\beta,(m)}_{\mathcal{J}}$ denote a probability measure on the path space $\mathcal{C}$ under which the canonical coordinate process $(X_t)_{t\geq0}$ has the marginal distribution $P^{\beta,(m)}_{\mathcal{J}_t}$ at each $t\geq t_0$ and such that the marginals of $\mathbb{P}^{\beta,(m)}_{\mathcal{J}}$ agrees with $\mathbb{P}$ at time $t$ when $0\leq t\leq t_0$. Then,
    \begin{equation*}
        \mathbb{E}^{\mathbb{P}^{\beta,(m)}_{\mathcal{J}}}\big[\normy{\mathcal{H}^{\beta,(m)}_t(X_t)-X_t}^2\big]=\mathbb{E}^{\mathbb{P}^0}\big[\normy{\mathcal{L}^\beta_t(X_{t_0})-\mathcal{J}^{\beta,(m)}_t(X_{t_0})}^2\big],\quad\text{for all}\quad t\geq t_0.
    \end{equation*}
    Notice that,
    \begin{equation*}
        \frac{1}{2(t-t_0)^2}\normy{\mathcal{L}^\beta_t(x)-\mathcal{J}^{\beta,(m)}_t(x)}^2\leq\normy{V^\beta_{t_0}(x)-\nabla\varphi^{\beta,(m)}_{t_0}(x)}^2+\normy{\frac{1}{t-t_0}\int_{t_0}^tV^\beta_\theta(\mathcal{L}^\beta_\theta(x))\,d\theta-V^\beta_{t_0}(x)}^2.
    \end{equation*}
    Using (\ref{L2 limit of velocity vector}) and Lemma \ref{lem, 6.2}, we can conclude that
    \begin{equation*}
        \lim\limits_{m\to\infty}\lim\limits_{t\searrow t_0}\frac{1}{t-t_0}W_2(P^\beta_t,P^{\beta,(m)}_{\mathcal{J}_t})=\lim\limits_{m\to\infty}\lim\limits_{t\searrow t_0}\frac{1}{t-t_0}\mathbb{E}^{\mathbb{P}^{\beta,(m)}_{\mathcal{J}}}\big[\normy{\mathcal{H}^{\beta,(m)}_t(X_t)-X_t}^2\big]^{1/2}=0
    \end{equation*}
    which verifies (\ref{w limit 2}). Since
    \begin{equation*}
        \lim\limits_{m\to\infty}\lim\limits_{t\searrow t_0}\frac{1}{t-t_0}W_2(P^{\beta,(m)}_{\mathcal{J}_t},P_{t_0})\leq\lim\limits_{m\to\infty}\lim\limits_{t\searrow t_0}\frac{1}{t-t_0}W_2(P^{\beta,(m)}_{\mathcal{J}_t},P^\beta_t)+\liminf\limits_{t\searrow t_0}\frac{1}{t-t_0}W_2(P^\beta_t,P_{t_0})
    \end{equation*}
    as well as
    \begin{equation*}
        \limsup\limits_{t\searrow t_0}\frac{1}{t-t_0}W_2(P^\beta_t,P_{t_0})\leq\lim\limits_{m\to\infty}\lim\limits_{t\searrow t_0}\frac{1}{t-t_0}W_2(P^\beta_t,P^{\beta,(m)}_{\mathcal{J}_t})+\lim\limits_{m\to\infty}\lim\limits_{t\searrow t_0}\frac{1}{t-t_0}W_2(P^{\beta,(m)}_{\mathcal{J}_t},P_{t_0}),
    \end{equation*}
    using (\ref{w limit 1}) and (\ref{w limit 2}) we can conclude (\ref{eqn, w-space distance}). And the assertion is verified.
\end{proof}
Theorem \ref{w-space displacement} reveals the time-derivative of the Wasserstein metric from $(P^\beta_t)_{t\geq t_0}$ to $P^\beta_{t_0}$. This limiting identity (\ref{eqn, w-space distance}) includes both the gradient of the relative entropy process and perturbation terms. In fact, if we collapse the perturbation, the results become more transparent.
%-----------------------------------
%corollary 6.4
\begin{corollary}\label{w-space displacement, unperturbed}
    Switching off the perturbation $\beta(\cdot):\mathbb{R}^d\to\mathbb{R}^d$, Theorem \ref{w-space displacement} reduces to the time-derivative of the unperturbed Wasserstein metric from $(P^0_t)_{t\geq t_0}$ to $P^0_{t_0}$,
    \begin{equation}\label{eqn, 6.15}
        \lim\limits_{t\searrow t_0}\frac{1}{t-t_0}W_2(P^0_t,P^0_{t_0})=\frac{1}{2}\norm{\nabla\mathcal{R}^0_{t_0}(X_{t_0})}_{L^2(\mathbb{P}^0)}.
    \end{equation}
\end{corollary}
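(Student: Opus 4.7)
The plan is to obtain this corollary as a direct specialization of Theorem \ref{w-space displacement} by setting the perturbation $\beta(\cdot)\equiv 0$. When the perturbation field vanishes identically, the Itô--Langevin dynamics (\ref{perturbed Itô-Langevin dynamics}) reduces to its unperturbed version, so $\mathbb{P}^\beta$ collapses to $\mathbb{P}^0$ and the marginal family $(P^\beta_t)_{t\geq 0}$ coincides with $(P^0_t)_{t\geq 0}$. The RHS of (\ref{eqn, w-space distance}) then becomes $\tfrac{1}{2}\normy{\nabla\mathcal{R}^0_{t_0}(X_{t_0}) + 2\cdot 0}_{L^2(\mathbb{P}^0)} = \tfrac{1}{2}\normy{\nabla\mathcal{R}^0_{t_0}(X_{t_0})}_{L^2(\mathbb{P}^0)}$, which is precisely (\ref{eqn, 6.15}). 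So at the level of statements this is essentially a one-line derivation.

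If one instead wanted to rederive (\ref{eqn, 6.15}) from scratch in the spirit of the Otto calculus, I would follow the same four-step blueprint as in Theorem \ref{w-space displacement}. First, identify the velocity field at $t_0$ in the tangent space $T_{P^0_{t_0}}\mathscr{P}_2(\mathbb{R}^d)$: under the standing regularity assumption on $\psi(\cdot)$ we have $V^0_{t_0}(\cdot) = \nabla\varphi^0_{t_0}(\cdot) = -\nabla\psi(\cdot) - \tfrac{1}{2}\nabla\log p^0_{t_0}(\cdot) = -\tfrac{1}{2}\nabla\mathcal{R}^0_{t_0}(\cdot)$. Second, approximate this velocity in $L^2(\mathbb{P}^0)$ by a sequence of compactly supported smooth gradients $\nabla\varphi^{0,(m)}_{t_0}(\cdot)$. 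Third, form the localized linear transports $\mathcal{J}^{0,(m)}_t(x) := x + (t-t_0)\nabla\varphi^{0,(m)}_{t_0}(x)$; for $t$ close to $t_0$, the Hessian $Id_{\mathbb{R}^d} + (t-t_0)\operatorname{Hess}(\varphi^{0,(m)}_{t_0})$ is positive definite by smoothness and compact support, so $\mathcal{J}^{0,(m)}_t$ is the gradient of a convex function and Brenier's theorem gives
\begin{equation*}
\lim_{m\to\infty}\lim_{t\searrow t_0}\frac{1}{t-t_0}W_2\bigl((\mathcal{J}^{0,(m)}_t)_\# P^0_{t_0},\,P^0_{t_0}\bigr) = \frac{1}{2}\normy{\nabla\mathcal{R}^0_{t_0}(X_{t_0})}_{L^2(\mathbb{P}^0)}.
\end{equation*}
Fourth, invoke the non-optimal curved flow $\mathcal{L}^0_t$ from Lemma \ref{lem, 6.2} with $(\mathcal{L}^0_t)_\# P^0_{t_0} = P^0_t$, build the coupling $\mathcal{L}^0_t \circ (\mathcal{J}^{0,(m)}_t)^{-1}$ from $(\mathcal{J}^{0,(m)}_t)_\# P^0_{t_0}$ to $P^0_t$, and estimate its cost by the triangle inequality
\begin{equation*}
\frac{\normy{\mathcal{L}^0_t(x)-\mathcal{J}^{0,(m)}_t(x)}}{t-t_0} \leq \normy{V^0_{t_0}(x)-\nabla\varphi^{0,(m)}_{t_0}(x)} + \normy{\tfrac{1}{t-t_0}\int_{t_0}^t V^0_\theta(\mathcal{L}^0_\theta(x))\,d\theta - V^0_{t_0}(x)}.
\end{equation*}
The two terms vanish in the iterated limit $\lim_{m\to\infty}\lim_{t\searrow t_0}$ by the $L^2(\mathbb{P}^0)$-approximation and by Lemma \ref{lem, 6.2} respectively.

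The only real work in the independent approach is the commutation of the double limit with the triangle inequality, which is exactly what was handled in Theorem \ref{w-space displacement}; since every estimate used there is uniform in the vanishing parameter $\beta$, the unperturbed case inherits the argument for free. I therefore expect no new obstacle: the cleanest route is simply to quote Theorem \ref{w-space displacement} and set $\beta\equiv 0$.
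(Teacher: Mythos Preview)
Your proposal is correct and matches the paper's approach exactly: the corollary is stated without proof in the paper, as it follows immediately from Theorem \ref{w-space displacement} by setting $\beta\equiv 0$, which is precisely your first paragraph. Your additional sketch of the from-scratch rederivation is a faithful specialization of the proof of Theorem \ref{w-space displacement} and is not needed here, as you yourself note.
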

Without perturbation, the time-derivative of the Wasserstein metric (\ref{eqn, 6.15}) is equal to the square root of Fisher information. Through this limiting identity, (\ref{eqn, 6.15}) is therefore correlated to the time-derivative of the relative entropy (\ref{unperturbed classical entropy decay, time-displacement}). Additionally, this insight reveals the steepest descent property of the dissipation of relative entropy, concerning the scenario of the unperturbed dynamics.
%---------------------------------------
\subsection{Steepest descent property of relative entropy $\mathbb{H}$}
The philosophy of steepest descent is to locate a parametrized curve from an abstract manifold, such that the varying rate of some indexed quantities is extremized. This idea was adopted by Debye \cite{Debye} who used Bessel functions \cite{Temme} to numerically approximate an integral. In the work of Lagrange \cite{Lagrange1, Lagrange2}, Landau/Lifshitz \cite{Landau/Lifshitz}, and Feynman \cite{Feynman}, the Lagrangian formalism of mechanics was progressively designed to interpret the variational principles and the trajectory of classical particles.\par
Over this expository article, the phrase \textit{steepest descent} has appeared without an explanation. What it refers to is not completely in align with the literature listed above. Nonetheless, its precise interpretation will be clarified at this point. And this steepest descent property, corresponding to the unperturbed scenario of (\ref{perturbed Itô-Langevin dynamics}), will also answer the question why we are interested in introducing the smooth perturbation $\beta(\cdot)$ into our Itô-Langevin dynamics, and why the unperturbed case is remarkable.\par
Combining the results from Theorems \ref{thm: classical result, perturbed} and \ref{w-space displacement}, we observe that the time-derivatives of relative entropy and Wasserstein metric, evaluated at $t_0\geq0$, are correlated via an expression of Fisher information (\ref{Fisher information}) as well as some perturbation terms, i.e.
\begin{equation}
    \label{derivetive: composed, perturbed}
    \lim\limits_{t\searrow t_0}\frac{\,\mathbb{H}[P^\beta_t|Q]-\mathbb{H}[P^\beta_{t_0}|Q]\,}{W_2(P^\beta_t,P^\beta_{t_0})}=-\mathbb{E}^{\mathbb{P}^0}\bigg[\nabla\mathcal{R}^0_{t_0}(X_{t_0})\cdot\frac{\nabla\mathcal{R}^0_{t_0}(X_{t_0})+2\beta(X_{t_0})}{\norm{\nabla\mathcal{R}^0_{t_0}(X_{t_0})+2\beta(X_{t_0})}_{L^2(\mathbb{P}^0)}}\bigg].
\end{equation}
When the perturbation vanishes, the RHS of (\ref{derivetive: composed, perturbed}) reduces to the square root of Fisher information, i.e.
\begin{equation}
    \label{derivetive: composed, unperturbed}
    \lim\limits_{t\searrow t_0}\frac{\,\mathbb{H}[P^0_t|Q]-\mathbb{H}[P^0_{t_0}|Q]\,}{W_2(P^0_t,P^0_{t_0})}=-\mathbb{E}^{\mathbb{P}^0}\bigg[\nabla\mathcal{R}^0_{t_0}(X_{t_0})\cdot\frac{\nabla\mathcal{R}^0_{t_0}(X_{t_0})}{\norm{\nabla\mathcal{R}^0_{t_0}(X_{t_0})}_{L^2(\mathbb{P}^0)}}\bigg].
\end{equation}
Comparing (\ref{derivetive: composed, perturbed}) and (\ref{derivetive: composed, unperturbed}) and in light of the Cauchy-Schwarz inequality, we observe that their difference,
\begin{equation*}
    \lim\limits_{t\searrow t_0}\frac{\,\mathbb{H}[P^\beta_t|Q]-\mathbb{H}[P^\beta_{t_0}|Q]\,}{W_2(P^\beta_t,P^\beta_{t_0})}-\lim\limits_{t\searrow t_0}\frac{\,\mathbb{H}[P^0_t|Q]-\mathbb{H}[P^0_{t_0}|Q]\,}{W_2(P^0_t,P^0_{t_0})},
\end{equation*}
is always nonnegative, and strictly positive when $\beta(\cdot)$ is not parallel to $\nabla\mathcal{R}^0_{t_0}$. If we otherwise view $\mathbb{H}[P^\beta_t|Q]$, $t\geq0$ as a flow on the curve $(P^\beta_t)_{t\geq0}\subseteq\mathscr{P}_2(\mathbb{R}^d)$, then its slope reaches infimum in the absence of perturbation. Henceforth, the relative entropy is unlikely to increase and most likely to decrease when the perturbation vanishes. This extremal phenomenon is therefore referred as the steepest descent property.

%-----------------------------------------
%-----------------------------------------

\subsection{Dissipative velocity of relative entropy $\mathbb{H}$}
Applying additional non-degeneracy conditions on the second-order derivatives of the potential $\psi(\cdot)$, we could extract more information from the unperturbed Itô-Langevin stochastic dynamics (\ref{perturbed Itô-Langevin dynamics}). Namely, we obtain the Bakry-Émery \cite{Bakry/Émery} exponential decay rate of $\mathbb{H}[P^0_t|Q]$. For an invitation to the relevant topics in the Bakry-Émery theory, which derives also the exponential decay of $\mathbb{I}[P^0_t|Q]$ defined in (\ref{Fisher information}), readers are encouraged to the references Bakry-Émery \cite{Bakry/Émery}, Bakry/Gentil/Ledoux \cite{Bakry/Gentil/Ledoux}, and Gentil \cite{Gentil}.\par
In our exposition, the derivation relies on the analysis of the geodesics in $\mathscr{P}_2(\mathbb{R}^d)$, viewed as a manifold. First, let $\mu_a,\mu_b$, $a<b$, be two elements in $\mathscr{P}_2(\mathbb{R}^d)$, both absolutely continuous with respect to the reference measure $Q$ introduced in Section \ref{sec: stochastic dynamics}. Let $\mathcal{T}$ denote the optimal transport form $\mu_a$ to $\mu_b$, i.e.~$\mu_b=(\mathcal{T})_\#\mu_a$. Then, the interpolation family $(\mathcal{T}_t)_{a\leq t\leq b}$ of transport plans induced by $\mathcal{T}$ such that
\begin{equation*}
    \mathcal{T}_t\coloneqq\frac{b-t}{b-a} Id_{\mathbb{R}^d}+\frac{t-a}{b-a}\mathcal{T},\quad\text{for all}\quad t\in[a,b],
\end{equation*}
generates a law $\mu$ on the Borel sets of $\mathcal{C}([a,b];\mathbb{R}^d)$. For each $t\in[a,b]$, let $\mu_t$ denote the marginal distribution of $\mu$ on $\mathbb{R}^d$ at $t$. Then, the parametrized family $(\mu_t)_{a\leq t\leq b}$ is a curve in $\mathscr{P}_2(\mathbb{R}^d)$ and satisfies $\mu_t\coloneqq(\mathcal{T}_t)_\#\mu_a$ for all $t\in[a,b]$. Hence, $(\mu_t)_{a\leq t\leq b}$ is a constant speed geodesic.\par
In Section \ref{sec: stochastic dynamics}, we have defined the relative entropy $\mathbb{H}[P^\beta_t|Q]$ and its associated process $\mathcal{R}^{\mathbb{P}^\beta}_t$, abbreviated as $\mathcal{R}^\beta_t$, for each $t\geq0$. We can similarly define the relative entropy $\mathbb{H}[\mu_t|Q]$ as well as its associated process $\mathcal{R}^\mu_t(X_t)\coloneqq\log d\mu_t/dQ$ with $a\leq t\leq b$, where $(X_t)_{a\leq t\leq b}$ is the canonical coordinate process in $\mathcal{C}([a,b];\mathbb{R}^d)$ so that $X_t\sim\mu_t$ for all $a\leq t\leq b$. The dissipation of $\mathbb{H}[\mu_t|Q]$, calculated against $Q$ and along the geodesic $(\mu_t)_{a\leq t\leq b}$, will be our first step to understand the exponential decay rate of $\mathbb{H}[P^0_t|Q]$ with $t\geq t_0$.
%---------------------------------------
%lemma 6.6
\begin{lemma}\label{lem: 6.6}
    Fix $\mu_a,\mu_b\in\mathscr{P}_2(\mathbb{R}^d)$, both absolutely continuous with respect to $Q$. We have the time-derivative of the relative entropy along the constant speed geodesic $(\mu_t)_{a\leq t\leq b}$,
    \begin{equation*}
        \lim\limits_{t\searrow a}\frac{1}{t-a}\bigg(\mathbb{H}\big[\mu_t|Q\big]-\mathbb{H}\big[\mu_a|Q\big]\bigg)=\frac{1}{b-a}\mathbb{E}^{\mu}\big[\nabla\mathcal{R}^\mu_a\cdot(\mathcal{T}-Id_{\mathbb{R}^d})(X_a)\big]
    \end{equation*}
\end{lemma}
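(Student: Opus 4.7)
The plan is to pull everything back to an integral against $\mu_a$ via the change-of-variables formula induced by the interpolation $\mathcal{T}_t = \tfrac{b-t}{b-a}\,Id_{\mathbb{R}^d} + \tfrac{t-a}{b-a}\mathcal{T}$, and then differentiate at $t=a$, where $\mathcal{T}_a = Id_{\mathbb{R}^d}$ collapses all Jacobian factors to the identity. Denote by $\rho_t$ the Lebesgue density of $\mu_t$ and by $q$ the Lebesgue density of $Q$. Since $\mu_t=(\mathcal{T}_t)_\#\mu_a$ and $\mathcal{T}_t$ is (for $t$ close to $a$) the gradient of a strictly convex smooth function, the Monge--Amp\`ere identity
\[
    \rho_t(\mathcal{T}_t(x))\,\det(D\mathcal{T}_t(x)) = \rho_a(x),\quad x\in\mathbb{R}^d,
\]
holds, which gives the representation
\[
    \mathbb{H}[\mu_t|Q] = \int_{\mathbb{R}^d}\log\rho_a(x)\,d\mu_a(x) - \int_{\mathbb{R}^d}\log q(\mathcal{T}_t(x))\,d\mu_a(x) - \int_{\mathbb{R}^d}\log\det(D\mathcal{T}_t(x))\,d\mu_a(x).
\]

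The first term is independent of $t$, so it vanishes after subtracting $\mathbb{H}[\mu_a|Q]$. For the second and third, I would differentiate under the integral at $t=a$. Using $\tfrac{d}{dt}\mathcal{T}_t = \tfrac{1}{b-a}(\mathcal{T}-Id_{\mathbb{R}^d})$, the chain rule gives
\[
    -\frac{d}{dt}\bigg|_{t=a}\int_{\mathbb{R}^d}\log q(\mathcal{T}_t(x))\,d\mu_a(x) = -\frac{1}{b-a}\int_{\mathbb{R}^d}\nabla\log q(x)\cdot(\mathcal{T}-Id_{\mathbb{R}^d})(x)\,d\mu_a(x),
\]
while Jacobi's formula and $D\mathcal{T}_a = Id_{\mathbb{R}^d}$ yield $\tfrac{d}{dt}\big|_{t=a}\log\det(D\mathcal{T}_t(x)) = \tfrac{1}{b-a}\operatorname{div}(\mathcal{T}-Id_{\mathbb{R}^d})(x)$, whence an integration by parts produces
\[
    -\frac{d}{dt}\bigg|_{t=a}\int_{\mathbb{R}^d}\log\det(D\mathcal{T}_t(x))\,d\mu_a(x) = \frac{1}{b-a}\int_{\mathbb{R}^d}\nabla\log\rho_a(x)\cdot(\mathcal{T}-Id_{\mathbb{R}^d})(x)\,d\mu_a(x).
\]
Adding the two contributions and recalling $\nabla\mathcal{R}^\mu_a = \nabla\log(\rho_a/q)$ collapses the sum to $\tfrac{1}{b-a}\mathbb{E}^{\mu}[\nabla\mathcal{R}^\mu_a\cdot(\mathcal{T}-Id_{\mathbb{R}^d})(X_a)]$, which is precisely the claim.

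The main obstacle is justifying the exchange of limit and integral. For the $\log q$ piece this is direct from the smoothness of $\psi$ (hence of $\log q = -2\psi$) together with the linear growth bound on $\nabla\psi$ and the finite second moment of $\mu_a$, which give a uniform integrable majorant of the difference quotients. For the log-determinant term one must control $\log\det(D\mathcal{T}_t)$ near $t=a$: since $D\mathcal{T}_t = Id_{\mathbb{R}^d} + \tfrac{t-a}{b-a}(D\mathcal{T}-Id_{\mathbb{R}^d})$ is a small perturbation of the identity, a uniform $L^\infty$ expansion of $\log\det$ yields the Jacobi identity plus a remainder of order $(t-a)^2$, and the integration by parts is legitimate provided $\mathcal{T}-Id_{\mathbb{R}^d}$ has enough regularity (for instance via a smooth mollification of $\mathcal{T}$ and a passage to the limit). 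These are the technical points where Brenier's theorem and the $\mathcal{C}^\infty$-regularity assumptions in the earlier Wasserstein-space framework do the heavy lifting.
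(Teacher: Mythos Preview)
Your argument is correct and is in fact the classical McCann displacement-interpolation computation: pull the entropy back to $\mu_a$ via the Monge--Amp\`ere relation $\rho_t(\mathcal{T}_t)\det D\mathcal{T}_t=\rho_a$, differentiate the resulting one-parameter integral at $t=a$, and finish with an integration by parts on the $\log\det$ term. The paper, by contrast, stays in its trajectorial mode: it introduces the velocity field $V^\mu_t(x)=(\mathcal{T}-Id_{\mathbb{R}^d})\bigl((\mathcal{T}_t)^{-1}x\bigr)$, invokes the continuity equation $\partial_t\rho^\mu_t+\operatorname{div}(\rho^\mu_t V^\mu_t)=0$ (cited from Villani), and computes $d\mathcal{R}^\mu_t(X_t)$ along the deterministic flow $dX_t=\tfrac{1}{b-a}V^\mu_t(X_t)\,dt$; after taking $\mu$-expectation and letting $t\searrow a$ it lands on $\mathbb{E}^{\mu}\bigl[2(\nabla\psi\cdot V^\mu_a)-\operatorname{div}V^\mu_a\bigr]$ and only then applies the same integration by parts you use. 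So the two routes meet at the identical final step, but yours bypasses the continuity equation and the process-level bookkeeping entirely, which is cleaner here since no stochasticity is present along a geodesic; the paper's version has the virtue of mirroring the trajectorial calculations of Section~5 and making the parallel with Theorem~5.7 explicit. Your caveats about justifying the differentiation under the integral and the regularity needed for the integration by parts are apt, and the paper is no more rigorous on these points than you are.
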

\begin{proof}
    Only in this proof, we use $V^\mu_t=(V^{\mu,(1)}_t\ldots,V^{\mu,(d)}_t)$ to denote the velocity field defined by,
    \begin{equation*}
        (t,x)\mapsto V^\mu_t(x)\coloneqq(\mathcal{T}-Id_{\mathbb{R}^d})\big((\mathcal{T}_t)^{-1}x\big),\quad\text{for all}\quad(t,x)\in[a,b]\times\mathbb{R}^d.
    \end{equation*}
    Then, $V^\mu_t(\cdot)$ is associated to the transport $\mathcal{T}_t$ in the sense that
    \begin{equation*}
        \mathcal{T}_t(x)=x+\frac{1}{b-a}\int_a^t V^\mu_s\big(\mathcal{T}_s(x)\big)\,ds,\quad\text{for all}\quad a\leq t\leq b.
    \end{equation*}
    Since each $\mu_t$ is absolutely continuous with respect to $Q$, while $Q$ is absolutely continuous with respect to the Lebesgue measure on $\mathbb{R}^d$, then $\mu_t$ is absolutely continuous with respect to the Lebesgue measure on $\mathbb{R}^d$ with density $\rho^\mu_t(\cdot):\mathbb{R}^d\to\mathbb{R}_+$. According to \cite[Theorem 5.34]{Villani},
    \begin{equation*}
        -\frac{\partial\rho^\mu_t}{\partial t}(x)=\sum\limits_{1\leq i\leq d}\frac{\partial V^{\mu,(i)}_t}{\partial x_i}(x)\rho^\mu_t(x)+\big(V^\mu_t\cdot\nabla\rho^\mu_t\big)(x),\quad\text{for all}\quad(t,x)\in[a,b]\times\mathbb{R}^d.
    \end{equation*}
    Recall that $(X_t)_{a\leq t\leq b}$ denotes the coordinate process in $\mathcal{C}([a,b];\mathbb{R}^d)$ with $X_a\sim\mu_a$. Then the integral form,
    \begin{equation*}
        X_t=X_a+\frac{1}{b-a}\int_a^tV^{\mu}_s(X_s)\,ds,\quad\text{for all}\quad a\leq t\leq b,
    \end{equation*}
    characterizes the law of $X_t$ satisfying $X_t\sim(\mathcal{T}_t)_\#\mu_a$ for all $a\leq t\leq b$. Hence,
    \begin{equation*}
        d\rho^\mu_t(X_t)=\frac{\partial\rho^\mu_t}{\partial t}(X_t)\,dt+\nabla\rho^\mu_t(X_t)\,dX_t=-\frac{1}{b-a}\sum\limits_{1\leq i\leq d}\frac{\partial V^{\mu,(i)}_t}{\partial x_i}(X_t)\rho^\mu_t(X_t)\,dt.
    \end{equation*}
    Therefore, $d\log\rho^\mu_t(X_t)=-(b-a)^{-1}\sum_{1\leq i\leq d}(\partial V^{\mu,(i)}_t/\partial x_i)(X_t)\,dt$, $a\leq t\leq b$. Since $q(\cdot)=\exp(-2\psi(\cdot))$,
    \begin{equation*}
        d\log q(X_t)=-2\nabla\psi(X_t)\,dX_t=-\frac{2}{b-a}(\nabla\psi\cdot V^{\mu}_t)(X_t)\,dt.
    \end{equation*}
    Henceforth,
    \begin{equation*}
        d\mathcal{R}^\mu_t(X_t)=d\log\frac{\rho^\mu_t}{q}(X_t)=\frac{1}{b-a}\big(2(\nabla\psi\cdot V^{\mu}_t)-\sum\limits_{1\leq i\leq d}\frac{\partial V^{\mu,(i)}_t}{\partial x_i}\big)(X_t)\,dt,\quad\text{for all}\quad a\leq t\leq b.
    \end{equation*}
    Taking $\mu$-expectation,
    \begin{equation*}
        \mathbb{H}\big[\mu_t|Q\big]-\mathbb{H}\big[\mu_a|Q\big]=\mathbb{E}^{\mu}\big[\mathcal{R}^\mu_t(X_t)\big]-\mathbb{E}^{\mu}\big[\mathcal{R}^\mu_a(X_a)\big]=\frac{1}{b-a}\mathbb{E}^{\mu}\big[\int_a^t\big(2(\nabla\psi\cdot V^{\mu}_t)-\sum\limits_{1\leq i\leq d}\frac{\partial V^{\mu,(i)}_t}{\partial x_i}\big)(X_s)\,ds\big],
    \end{equation*}
    for all $a\leq t\leq b$. Consequently,
    \begin{equation*}
        \lim\limits_{t\searrow a}\frac{b-a}{t-a}\bigg(\mathbb{H}\big[\mu_t|Q\big]-\mathbb{H}\big[\mu_a|Q\big]\bigg)=\mathbb{E}^{\mu}\big[2(\nabla\psi\cdot V^{\mu}_t)(X_a)-\sum\limits_{1\leq i\leq d}\frac{\partial V^{\mu,(i)}_t}{\partial x_i}(X_a)\big]=\mathbb{E}^{\mu_a}\big[(\nabla\mathcal{R}^{\mu}_a\cdot V^\mu_a)(X_a)\big],
    \end{equation*}
    where the last equality is due to integration by parts. Since $V^\mu_a=\mathcal{T}-Id_{\mathbb{R}^d}$, the assertion is verified.
\end{proof}
%---------------------------------------
Now we impose the displacement convexity to the potential $\psi(\cdot):\mathbb{R}^d\to\mathbb{R}_+$, which is necessary to the formulation of the following lemma.
%---------------------------------------
%lemma 6.7
\begin{lemma}\label{lem: 6.7}
    Suppose the potential $\psi(\cdot)$ satisfies the curvature bound, $\text{Hess}(\psi)\geq\kappa Id_{\mathbb{R}^d}$, for some $\kappa>0$. Fix $\mu_a,\mu_b\in\mathscr{P}_2(\mathbb{R}^d)$ such that both are absolutely continuous with respect to $Q$. Then we have,
    \begin{equation*}
        \mathbb{H}\big[\mu_a|Q\big]-\mathbb{H}\big[\mu_b|Q\big]\leq-\mathbb{E}^{\mu}\big[\nabla\mathcal{R}^\mu_a\cdot(\mathcal{T}-Id_{\mathbb{R}^d})(X_a)\big]-\frac{\kappa}{2}W_2(\mu_a,\mu_b)^2.
    \end{equation*}
\end{lemma}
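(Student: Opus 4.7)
The plan is to establish the $\kappa$-displacement convexity of $\mathbb{H}[\cdot|Q]$ along McCann's optimal transport interpolation between $\mu_a$ and $\mu_b$, and then read off the first-order inequality with help from Lemma~\ref{lem: 6.6}. First, I would use $q=\exp(-2\psi)$ to decompose
\begin{equation*}
    \mathbb{H}[\mu|Q] = \int_{\mathbb{R}^d}\rho(x)\log\rho(x)\,dx + 2\int_{\mathbb{R}^d}\psi\,d\mu,\qquad\rho = d\mu/dx.
\end{equation*}
Since $\mu_a,\mu_b\ll Q$ are in particular absolutely continuous w.r.t.\ Lebesgue measure, Brenier's theorem furnishes an optimal transport map $\mathcal{T}=\nabla G$ (with $G$ convex) satisfying $\mathcal{T}_\#\mu_a=\mu_b$ and $\int\normy{\mathcal{T}(x)-x}^2\,d\mu_a(x) = W_2(\mu_a,\mu_b)^2$. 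Following Lemma~\ref{lem: 6.6}, I would form the constant speed geodesic $(\mu_t)_{a\leq t\leq b}\subseteq\mathscr{P}_2(\mathbb{R}^d)$ with $\mu_t=(\mathcal{T}_t)_\#\mu_a$ and write $s=(t-a)/(b-a)\in[0,1]$.

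Next, I would prove the displacement convexity estimate
\begin{equation*}
    \mathbb{H}[\mu_t|Q] \leq (1-s)\,\mathbb{H}[\mu_a|Q] + s\,\mathbb{H}[\mu_b|Q] - \kappa\,s(1-s)\,W_2(\mu_a,\mu_b)^2.
\end{equation*}
The Boltzmann-Shannon entropy $\int\rho_t\log\rho_t\,dx$ is geodesically convex by McCann's theorem (Ambrosio/Gigli/Savar\'e). For the potential contribution, the curvature assumption $\text{Hess}(\psi)\geq\kappa\,Id_{\mathbb{R}^d}$ yields pointwise $\psi(\mathcal{T}_t(x)) \leq (1-s)\psi(x) + s\psi(\mathcal{T}(x)) - \tfrac{\kappa}{2}s(1-s)\normy{\mathcal{T}(x)-x}^2$; integrating against $\mu_a$ and invoking Brenier's identity gives the $\kappa$-displacement convexity of $\int\psi\,d\mu_t$. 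Multiplying this by $2$ and adding the entropy estimate produces the displayed inequality.

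Finally, I would subtract $\mathbb{H}[\mu_a|Q]$, divide by $t-a>0$, and let $t\searrow a$. The left-hand side converges to $(b-a)^{-1}\mathbb{E}^\mu[\nabla\mathcal{R}^\mu_a\cdot(\mathcal{T}-Id_{\mathbb{R}^d})(X_a)]$ by Lemma~\ref{lem: 6.6}, while the right-hand side converges to $(b-a)^{-1}(\mathbb{H}[\mu_b|Q]-\mathbb{H}[\mu_a|Q]) - \kappa(b-a)^{-1}W_2(\mu_a,\mu_b)^2$. Multiplying through by $b-a>0$ and rearranging yields
\begin{equation*}
    \mathbb{H}[\mu_a|Q]-\mathbb{H}[\mu_b|Q] \leq -\mathbb{E}^\mu\big[\nabla\mathcal{R}^\mu_a\cdot(\mathcal{T}-Id_{\mathbb{R}^d})(X_a)\big] - \kappa\,W_2(\mu_a,\mu_b)^2,
\end{equation*}
which is strictly stronger than the claimed bound and implies it since $\kappa>0$. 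The main obstacle is McCann's geodesic convexity of the Boltzmann-Shannon entropy, which rests on the Monge-Amp\`ere change-of-variable $\rho_a=(\rho_t\circ\mathcal{T}_t)\det(\nabla\mathcal{T}_t)$ together with concavity of $s\mapsto\log\det((1-s)I+s\,\text{Hess}\,G)$ when $G$ is convex; this is a standard result in optimal transport, available from the reference already cited in the paper.
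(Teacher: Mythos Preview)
Your approach is essentially the same as the paper's: both decompose $\mathbb{H}[\mu_t|Q]$ into the Boltzmann--Shannon part $\int\rho_t\log\rho_t$ and the potential part $2\int\psi\,d\mu_t$, invoke McCann's displacement convexity for the former and the pointwise $\kappa$-convexity of $\psi$ for the latter, and then combine with Lemma~\ref{lem: 6.6} for the first-order term. The only cosmetic difference is that the paper phrases displacement convexity through the second-derivative bound $\partial_t^2\mathbb{H}[\mu_t|Q]\geq \kappa(b-a)^{-2}W_2(\mu_a,\mu_b)^2$ and then integrates via Taylor's formula with remainder, whereas you go directly through the chord inequality; by tracking the factor of $2$ in $q=e^{-2\psi}$ you in fact obtain the sharper constant $\kappa$ in place of the paper's $\kappa/2$.
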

\begin{proof}
    Only in this proof, we define the following two functions,
    \begin{equation*}
        \mathscr{F}(t)\coloneqq\int_{\mathbb{R}^d}\rho^\mu_t(x)\log\rho^\mu_t(x)\,dx\qquad\text{and}\qquad\mathscr{H}(t)\coloneqq\int_{\mathbb{R}^d}2\psi(x)\rho^\mu_t(x)\,dx,\quad\text{for all}\quad a\leq t\leq b.
    \end{equation*}
    By \cite[Theorem 5.15]{Villani}, the functions $\mathscr{F}(t)$ and $\mathscr{H}(t)$ are, respectively, displacement convex and $\kappa$-uniformly displacement convex from the Wasserstein space perspective, i.e.
    \begin{equation*}
        \frac{\partial^2}{\partial t^2}\mathscr{F}(t)\geq0\qquad\text{and}\qquad\frac{\partial^2}{\partial t^2}\mathscr{H}(t)\geq\frac{\kappa}{(b-a)^2} W_2(\mu_a,\mu_b)^2,\quad\text{for all}\quad a\leq t\leq b.
    \end{equation*}
    Notice that $\mathbb{H}[\mu_t|Q]=\mathscr{F}(t)+\mathscr{H}(t)$, $a\leq t\leq b$. Henceforth, the relative entropy function $t\mapsto\mathbb{H}[\mu_t|Q]$ is $\kappa$-uniformly displacement convex from the Wasserstein space perspective, i.e.
    \begin{equation*}
        \frac{\partial^2}{\partial t^2}\mathbb{H}\big[\mu_t|Q\big]\geq\frac{\kappa}{(b-a)^2} W_2(\mu_a,\mu_b)^2,\quad\text{for all}\quad a\leq t\leq b.
    \end{equation*}
    Use Lemma \ref{lem: 6.6} and the Taylor formula, we observe that
    \begin{equation*}
        \mathbb{H}\big[\mu_b|Q\big]=\mathbb{H}\big[\mu_a|Q\big]+(b-a)\frac{\partial}{\partial t}\mathbb{H}\big[\mu_t|Q\big]\bigg|_{t=0^+}+\int_a^b(b-t)\frac{\partial^2}{\partial t^2}\mathbb{H}\big[\mu_t|Q\big]\,dt,
    \end{equation*}
    which implies
    \begin{equation*}
        \mathbb{H}\big[\mu_b|Q\big]-\mathbb{H}\big[\mu_a|Q\big]\geq\mathbb{E}^{\mu}\big[\nabla\mathcal{R}^\mu_a\cdot(\mathcal{T}-Id_{\mathbb{R}^d})(X_a)\big]+\frac{\kappa}{2}W_2(\mu_a,\mu_b)^2.
    \end{equation*}
    And the assertion is verified.
\end{proof}
%---------------------------------------
Lemma \ref{lem: 6.7} is of the HWI inequality type of Cordero-Erausquin \cite{Cordero-Erausquin} and Otto/Villani \cite{Otto/Villani}, which relates the fundamental quantities of relative entropy (H), quadratic Wasserstein distance (W), and Fisher information (I). This inequality has also been discussed by Datta/Rouzé \cite{Datta/Rouzé}, McCann \cite{McCann}, and Villani \cite{Villani2} on its convexity results and on its relations to the relative entropy dissipation. Another exposition on the HWI inequalities is Gentil/Léonard/Ripani/Tamanini \cite{Gentil/Léonard/Ripani/Tamanini}. 
%---------------------------------------
%theorem 6.7
\begin{theorem}
    \label{exponential entropy decay, perturbed}
    Suppose the potential $\psi(\cdot)$ satisfies the curvature bound, $\text{Hess}(\psi)\geq\kappa Id_{\mathbb{R}^d}$ for some $\kappa>0$. Then the relative entropy decays exponentially. In particular,
    \begin{equation}
    \label{exponential decay, perturbed}
        \mathbb{H}\big[P^0_t|Q\big]\leq\mathbb{H}\big[P^0_{t_0}|Q\big]e^{-\kappa(t-t_0)},\quad\text{for all}\quad t\geq t_0.
    \end{equation}
\end{theorem}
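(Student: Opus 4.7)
The plan is to upgrade Lemma \ref{lem: 6.7} into a logarithmic Sobolev inequality relating $\mathbb{H}[P^0_t|Q]$ to $\mathbb{I}[P^0_t|Q]$, and then to close the loop with the de Bruijn identity from Corollary \ref{cor: classical result, unperturbed} so as to obtain a first-order differential inequality for $t\mapsto\mathbb{H}[P^0_t|Q]$. Exponential decay (\ref{exponential decay, perturbed}) then follows from Gronwall. The curvature hypothesis $\text{Hess}(\psi)\geq\kappa Id_{\mathbb{R}^d}$ forces $\psi$ to grow at least quadratically at infinity, so that $e^{-2\psi}\in L^1(\mathbb{R}^d)$ with finite second moment; after the usual normalization, I may regard $Q$ itself as a probability measure in $\mathscr{P}_2(\mathbb{R}^d)$ with $\mathbb{H}[Q|Q]=0$.

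First I would sharpen Lemma \ref{lem: 6.7} into the HWI inequality. Since the Brenier-optimal map $\mathcal{T}$ from $\mu_a$ to $\mu_b$ satisfies $\int_{\mathbb{R}^d}\normy{(\mathcal{T}-Id_{\mathbb{R}^d})(x)}^2\,d\mu_a(x)=W_2(\mu_a,\mu_b)^2$, the Cauchy--Schwarz inequality yields
\[
-\mathbb{E}^{\mu}\big[\nabla\mathcal{R}^\mu_a\cdot(\mathcal{T}-Id_{\mathbb{R}^d})(X_a)\big]\leq\sqrt{\mathbb{I}[\mu_a|Q]}\,W_2(\mu_a,\mu_b),
\]
so Lemma \ref{lem: 6.7} yields $\mathbb{H}[\mu_a|Q]-\mathbb{H}[\mu_b|Q]\leq\sqrt{\mathbb{I}[\mu_a|Q]}\,W_2(\mu_a,\mu_b)-\tfrac{\kappa}{2}W_2(\mu_a,\mu_b)^2$. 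Specializing $\mu_a=P^0_t$ and $\mu_b=Q$, and bounding the right-hand side by Young's inequality $xy\leq x^2/(2\kappa)+\kappa y^2/2$ with $x=\sqrt{\mathbb{I}[P^0_t|Q]}$ and $y=W_2(P^0_t,Q)$, the Wasserstein distance is absorbed completely, producing
\[
\mathbb{H}[P^0_t|Q]\leq\tfrac{1}{2\kappa}\,\mathbb{I}[P^0_t|Q],\quad\text{for all}\quad t\geq t_0.
\]

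Finally, Corollary \ref{cor: classical result, unperturbed} supplies $\tfrac{d}{dt}\mathbb{H}[P^0_t|Q]=-\tfrac{1}{2}\mathbb{I}[P^0_t|Q]$, which combined with the logarithmic Sobolev inequality above produces the linear differential inequality $\tfrac{d}{dt}\mathbb{H}[P^0_t|Q]\leq-\kappa\,\mathbb{H}[P^0_t|Q]$ for all $t\geq t_0$. Gronwall's lemma integrates this into (\ref{exponential decay, perturbed}). The main obstacle is strictly technical: Lemma \ref{lem: 6.7} is stated for $\mu_a,\mu_b\in\mathscr{P}_2(\mathbb{R}^d)$ that are absolutely continuous with respect to $Q$, so the choice $\mu_b=Q$ must be licensed either by the normalization step above or by an approximation of $Q$ inside $\mathscr{P}_2(\mathbb{R}^d)$; the curvature hypothesis makes both routes straightforward, and beyond that the proof is an application of Cauchy--Schwarz, Young, and Gronwall.
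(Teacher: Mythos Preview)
Your proposal is correct and follows essentially the same route as the paper: Cauchy--Schwarz applied to Lemma~\ref{lem: 6.7} yields the HWI inequality, specialization to $(\mu_a,\mu_b)=(P^0_t,Q)$ together with the quadratic bound (your Young's inequality, the paper's terser ``first take $(\mu_a,\mu_b)=(Q,P^0_t)$ and then $(\mu_a,\mu_b)=(P^0_t,Q)$'') gives the log-Sobolev inequality $\mathbb{H}[P^0_t|Q]\leq\tfrac{1}{2\kappa}\mathbb{I}[P^0_t|Q]$, and the de~Bruijn identity from Corollary~\ref{cor: classical result, unperturbed} closes into the differential inequality that Gronwall integrates. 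Your explicit discussion of why the curvature bound legitimizes treating $Q$ as an element of $\mathscr{P}_2(\mathbb{R}^d)$ is a point the paper leaves implicit.
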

\begin{proof}
    On the strength of the Cauchy-Schwarz inequality,
    \begin{equation*}
        -\mathbb{E}^{\mu}\big[\nabla\mathcal{R}^\mu_a\cdot(\mathcal{T}-Id_{\mathbb{R}^d})(X_a)\big]\leq\normy{\nabla\mathcal{R}^\mu_a(X_a)}_{L^2(\mu_a)}\normy{(\mathcal{T}-Id_{\mathbb{R}^d})(X_a)}_{L^2(\mu_a)}.
    \end{equation*}
    Use Lemma \ref{lem: 6.7}, the definition of Fisher information (\ref{Fisher information}) as well as the optimal transport property (\ref{optimal transport}),
    \begin{equation}\label{eqn, middle step}
        \mathbb{H}\big[\mu_a|Q\big]-\mathbb{H}\big[\mu_b|Q\big]\leq W_2(\mu_a,\mu_b)\sqrt{\mathbb{I}\big[\mu_a|Q\big]}-\frac{\kappa}{2}W_2(\mu_a,\mu_b)^2.
    \end{equation}
    Reading (\ref{eqn, middle step}), we first take $(\mu_a,\mu_b)=(Q,P^0_t)$ and then take $(\mu_a,\mu_b)=(P^0_t,Q)$, then
    \begin{equation*}
        \mathbb{H}\big[P^0_t|Q\big]\leq\frac{1}{2\kappa}\mathbb{I}\big[P^0_t|Q\big],\quad\text{for all}\quad t\geq0.
    \end{equation*}
    Applying Corollary \ref{cor: classical result, unperturbed},
    \begin{equation*}
        \frac{\partial}{\partial t}\mathbb{H}\big[P^0_t|Q\big]\leq-\kappa\mathbb{H}\big[P^0_t|Q\big].
    \end{equation*}
    And the assertion is verified.
\end{proof}
%--------------------------------------
In light of the Bakry-Émery theory, therefore, Theorem \ref{exponential entropy decay, perturbed} yields the exponential decay rate of the relative entropy corresponding to the unperturbed Itô-Langevin dynamics.

}

\bibliographystyle{plain}
\bibliography{literature}

\begin{thebibliography}{16}
\addcontentsline{toc}{section}{References}


\bibitem{Abbott}
M. M. Abbott, H. G. van Hess.
\newblock Thermodynamics with Chemical Applications.
\newblock{\em McGraw Hill,} 2nd ed., 1989.

\bibitem{Acciaio/Beiglböck/Penkner/Schachermayer/Temme}
B. Acciaio, M. Beiglböck, F. Penkner, W. Schachermayer, J. Temme.
\newblock A trajectorial interpretation of Doob’s martingale inequalities.
\newblock{\em Ann. Appl. Prob.,} 23 (4): 1494--1505, 2013.

\bibitem{Agueh}
M. Agueh.
\newblock Finsler structure in the $p$-Wasserstein space and gradient flows.
\newblock{\em C. R. Acad. Sci. Paris, Ser. I.,} 350: 35--40, 2012.

\bibitem{Ambrosio/Gigli/Savaré}
L. Ambrosio, N. Gigli, G. Savaré.
\newblock Gradient Flows in Metric Spaces and in the Space of Probability Measures.
\newblock{\em Lect. Math. ETH Zürich. Birkhäuser. Basel.,} 2nd ed., 2008.

\bibitem{Anderson}
B. D. O. Anderson.
\newblock Reverse-time diffusion equation models.
\newblock{\em Sto. Proces. Appl.,} 12 (3): 313-326, 1982.

\bibitem{Bakry/Émery}
D. Bakry, M. Émery.
\newblock Diffusions hypercontractives.
\newblock{\em 
Sémin. Prob. XIX. Lect. Note. Math.,} 1123, 2006.

\bibitem{Bakry/Gentil/Ledoux}
D. Bakry, I. Gentil, M. Ledoux.
\newblock Analysis and Geometry of Markov Diffusion Operators.
\newblock{\em Grundlehren Math. Wiss.,} 348, 2014.

\bibitem{Beiglböck/Siorpaes}
M. Beiglböck, P. Siorpaes.
\newblock Pathwise versions of the Burkholder-Davis-Gundy inequality.
\newblock{\em Bernoulli.,} 21 (1): 360–-373, 2015.

\bibitem{Boltzmann1}
L. Boltzmann.
\newblock Über die Mechanische Bedeutung des Zweiten Hauptsatzes der Wärmetheorie. 
\newblock{\em Wiener Berichte.,} 53: 195--220, 1866.

\bibitem{Boltzmann2} 
L. Boltzmann.
\newblock Vorlesungen über Gastheorie I.
\newblock {\em Leipzig, J. A. Barth.,} 1896.

\bibitem{Boltzmann3} 
L. Boltzmann.
\newblock Vorlesungen über Gastheorie II.
\newblock {\em Leipzig, J. A. Barth.,} 1896.

\bibitem{Boltzmann4}
L. Boltzmann.
\newblock Ueber die sogenannte H--Curve.
\newblock{\em Math. Ann.,} 50 (2--3): 325--332, 1898.

\bibitem{Brenier}
Y. Brenier.
\newblock Polar factorization and monotone rearrangement of vector-valued functions.
\newblock{\em Comm. Pure. Appl. Math.,} 44 (4): 375--417, 1991.

\bibitem{Breunung/Kogelbauer/Haller}
T. Breunung, F. Kogelbauer, G. Haller.
\newblock The deterministic core of stochastically perturbed nonlinear mechanical systems.
\newblock{\em Proce. R. Soc. A.,} 478 (2262): 20210933, 2022. 

\bibitem{Brossier/Zozor}
J.-M. Brossier, S. Zozor.
\newblock De Bruijn identities: From Shannon, Kullback-Leibler and Fisher to generalized $\varphi$-entropies, $\varphi$-divergences and $\varphi$-Fisher informations.
\newblock{\em AIP. Conf. Proc.,} 1641, 522, 2015. 

\bibitem{Carlen}
E. A. Carlen.
\newblock Conservative diffusions.
\newblock{\em Comm. Math. Phys.,} 94 (3): 293--315, 1984.

\bibitem{Chen/Margarint}
J. Chen, V. Margarint.
\newblock Perturbations of multiple Schramm–Loewner evolution with two non-colliding Dyson Brownian motions.
\newblock{\em Stoch. Process. Appl.,} 151: 553-570, 2022.

\bibitem{Chen/Margarint2}
J. Chen, V. Margarint.
\newblock Splitting algorithm and normed convergence for drawing the random Loewner curves.
\newblock{\em Proceeding. Royal Soc. A,} 481, 20240929, 2024.

\bibitem{Choulli}
T. Choulli. 
\newblock Minimal Entropy-Hellinger Martingale Measure in Incomplete Markets.
\newblock{\em Math. Fin.,} 15 (3): 465--490, 2005.

\bibitem{Chung}
K. L. Chung.
\newblock A Course in Probability Theory.
\newblock{\em Acad. Press.,} 3rd ed., 2001.

\bibitem{Clausius}
R. Clausius.
\newblock The Mechanical Theory of Heat - with its Applications to the Steam Engine and to Physical Properties of Bodies.
\newblock{\em London J. van Voorst,} 1, MDCCCLXVII, 1865.

\bibitem{Cordero-Erausquin}
D. Cordero-Erausquin.
\newblock Some Applications of Mass Transport to Gaussian-Type Inequalities.
\newblock{\em Arch. Ration. Mech. Analy.,} 161 (3): 257--269, 2002.

\bibitem{Cover/Thomas}
T. M. Cover, J. A. Thomas.
\newblock Elements of Information Theory.
\newblock{\em Wiley Ser. Telecommun. Sig. Proces. John. Wiley. Sons, Hoboken.,} 2006.

\bibitem{Datta/Rouzé}
N. Datta, C. Rouzé.
\newblock Relating Relative Entropy, Optimal Transport and Fisher Information: A Quantum HWI Inequality.
\newblock{\em Ann. Henri Poincaré,} 21: 2115--2150, 2020.

\bibitem{Davis/Karatzas}
M. H. A. Davis, I. Karatzas.
\newblock A Deterministic Approach to Optimal Stopping.
\newblock{\em Prob. Stat. Optimiz. Appl. Pro. Stat. Sect.,} 104: 455--466, 1994.

\bibitem{Debye}
P. Debye.
\newblock Näherungsformeln für die Zylinderfunktionen für große Werte des Arguments und unbeschränkt veränderliche Werte des Index.
\newblock{\em Math. Ann.,} 67 (4): 535--558, 1909.

\bibitem{Dobrushin}
R. L. Dobrushin.
\newblock Prescribing a System of Random Variables by Conditional Distributions.
\newblock{\em Theor. Prob. Appl.,} 15 (3): 458--486, 1970.

\bibitem{Dougherty}
J. P. Dougherty.
\newblock Model Fokker-Planck Equation for a Plasma and Its Solution.
\newblock{\em Phys. Fluid.,} 7: 1788-1799, 1964.

\bibitem{Erban/Chapman}
R. Erban, S. J. Chapman. 
\newblock Stochastic Modelling of Reaction--Diffusion Processes.
\newblock{\em Cam. Text. Appl. Math.,} 60, 2020.

\bibitem{Feynman}
R. P. Feynman.
\newblock The principle of least action in quantum mechanics.
\newblock{\em Princeton. Univ.,} 1942.

\bibitem{Föllmer1}
H. Föllmer.
\newblock An entropy approach to the time reversal of diffusion processes.
\newblock{\em Sto. Diff. Sys. -- Filter. Contro.} 60: 156--163, 1985.

\bibitem{Föllmer2}
H. Föllmer.
\newblock Time reversal on Wiener space.
\newblock{\em Sto. Proces. Math. Phys.} 1158, 1986.

\bibitem{Friedrich/Peinke}
R. Friedrich, J. Peinke.
\newblock Description of a Turbulent Cascade by a Fokker-Planck Equation.
\newblock{\em Phys. Rev. Lett.,} 78 (5): 863-866, 1997.

\bibitem{Gardiner}
C. W. Gardiner.
\newblock Handbook of stochastic methods.
\newblock{\em Springer-Verlag, Berlin, Heidelberg.,} 2nd ed., 1985.

\bibitem{Gaveau/Granger/Moreau/Schulman}
B. Gaveau, L. Granger, M. Moreau, L. S. Schulman.
\newblock Dissipation, interaction, and relative entropy.
\newblock{\em Phys. Rev. E.,} 89, 032107, 2014.

\bibitem{Gentil}
I. Gentil.
\newblock Logarithmic Sobolev inequality for diffusion semigroups.
\newblock{\em Optim. Transpor. Theo. Appl. Cam. Univ.Press.,} 413: 41-57, 2014.

\bibitem{Gentil/Léonard/Ripani}
I. Gentil, C. Léonard, L. Ripani,
\newblock Dynamical aspects of generalized Schrödinger problem via Otto calculus - A heuristic point of view.
\newblock{\em Rev. Mat. Iberoam.,} 2020.

\bibitem{Gentil/Léonard/Ripani/Tamanini}
I. Gentil, C. Léonard, L. Ripani, L. Tamanini.
\newblock An entropic interpolation proof of the HWI inequality.
\newblock{\em Sto. Proces. Appl.,} 130 (2): 907--923.

\bibitem{Gibbs1}
J. W. Gibbs.
\newblock Elementary Principles in Statistical Mechanics: Developed with Especial Reference to the Rational Foundation of Thermodynamics.
\newblock{\em Cam. Univ. Press.,} 1902.

\bibitem{Gibbs2}
J. W. Gibbs.
\newblock Graphical Methods in the Thermodynamics of Fluids.
\newblock{\em Trans. Connecticut. Academ. Art. Sci.,} 2: 309--342, 1873.

\bibitem{Goussev/Jalabert/Pastawski/Wisniacki}
A. Goussev, R. A. Jalabert, H. M. Pastawski, D. A. Wisniacki.
\newblock Loschmidt echo and time reversal in complex systems.
\newblock{\em Phil. Trans. R. Soc. A.,} 374 (2069): 1--40, 2015.

\bibitem{Gronwall}
T. H. Gronwall.
\newblock Note on the Derivatives with Respect to a Parameter of the Solutions of a System of Differential Equations.
\newblock{\em Ann. Math.,} 20 (4): 292--296, 1919.

\bibitem{Guerra/Marra}
F. Guerra, R. Marra.
\newblock Discrete stochastic variational principles and quantum mechanics.
\newblock{\em Phys. Rev. D,} 29: 1647, 1984.

\bibitem{Haussmann/Pardoux}
U. G. Haussmann, E. Pardoux.
\newblock Time Reversal of Diffusions.
\newblock{\em Ann. Prob.,} 14 (4): 1188-1205, 1986.

\bibitem{Hirsch}
M. W. Hirsch.
\newblock Differential Topology.
\newblock{\em Grad. Text. Math.,} 33, 1976.

\bibitem{Ji/Qi/Shen/Yi}
M. Ji, W. Qi, Z. Shen, Y. Yi.
\newblock Existence of periodic probability solutions to Fokker-Planck equations with applications.
\newblock{\em J. Funct. Analy.,} 277: 108281, 2019.

\bibitem{Jordan/Kinderlehrer}
R. Jordan, D. Kinderlehrer.
\newblock An extended variational principle.
\newblock{\em Par. Diff.
Equa. Appl., Lect. No. Pur. Appl. Math.}
177: 187--200, 1996.

\bibitem{Jordan/Kinderlehrer/Otto}
R. Jordan, D. Kinderlehrer, F. Otto.
\newblock The Variational Formulation of the Fokker-Planck Equation.
\newblock{\em SIAM J. Math. Analy.,} 29 (1): 1--17, 1998.

\bibitem{Kantorovich}
L. V. Kantorovich.
\newblock Mathematical Methods of Organizing and Planning Production.
\newblock{\em Manag. Sci.,} 6 (4): 366--422, 1939.

\bibitem{Karatzas/Schachermayer/Tschiderer}
I. Karatzas, W. Schachermayer, B. Tschiderer.
\newblock A Trajectorial Approach to the Gradient Flow Properties of Langevin-Smoluchowski Diffusions.
\newblock{\em Theo. Prob. Appl.,}
66 (4): 668-707, 2022.

\bibitem{Karatzas/Tschiderer}
I. Karatzas, B. Tschiderer.
\newblock A Variational Characterization of Langevin-Smoluchowski Diffusions.
\newblock{\em 
Sto. Analy., Filter., Sto. Optimiz.,} 239--265, 2022.

\bibitem{Karatzas/Shreve}
I. Karatzas, S. E. Shreve.
\newblock Brownian Motion and Stochastic Calculus.
\newblock{\em  Grad. Text. in Math.,} 113, 1998.

\bibitem{Kardar1}
M. Kardar.
\newblock Statistical Physics of Fields.
\newblock{\em Cam. Univ. Press.,} 2007.

\bibitem{Kardar2}
M. Kardar.
\newblock Statistical Physics of Particles.
\newblock{\em Cam. Univ. Press.,} 2007.

\bibitem{Kolmogorov}
A. N. Kolmogorov.
\newblock Über die analytischen Methoden in der Wahrscheinlichkeitsrechnung.
\newblock{\em Math. Ann.,} 104 (1): 415--458. 1931.

\bibitem{Krzakala/Zdeborová}
F. Krzakala, L. Zdeborová.
\newblock Following Gibbs states adiabatically-The energy landscape of mean-field glassy systems.
\newblock{\em Europhysics Letters.,} 90 (6): 66002, 2010.

\bibitem{Lagrange1}
J. L. Lagrange.
\newblock Mécanique Analytique I.
\newblock{\em Ve Courcier}, 1811.

\bibitem{Lagrange2}
J. L. Lagrange.
\newblock Mécanique Analytique II.
\newblock{\em Ve Courcier}, 1815.

\bibitem{Landau/Lifshitz}
L. D. Landau, E.M. Lifshitz.
\newblock Mechanics.
\newblock{\em Cour. Theor. Phys.,} 1976.

\bibitem{Langevin}
P. Langevin.
\newblock Sur la théorie du mouvement brownien.
\newblock{\em C. R. Acad. Sci. Paris.,} 146: 530--533, 1908.

\bibitem{Laeven/Stadje}
R. J. A. Laeven, M. Stadje.
\newblock Entropy Coherent and Entropy Convex Measures of Risk.
\newblock{\em Math. Oper. Res.,} 38 (2): 265--293, 2013.

\bibitem{Lee}
J. M. Lee.
\newblock Introduction to Smooth Manifolds.
\newblock{\em Grad. Text. in Math.,} 218, 2012.

\bibitem{Le Gall}
J.-F. Le Gall
\newblock Brownian Motion, Martingales, and Stochastic Calculus.
\newblock{\em Grad. Text. in Math.,} 274, 2016.

\bibitem{Levina/Bickel}
E. Levina, P. Bickel.
\newblock The Earth Mover's distance is the Mallows distance: some insights from statistics.
\newblock{\em Proc. IEEE Int. Conf. Comput. Vis.,} 2: 251--256, 2001.

\bibitem{Liang/Lyons/Qian}
G. Liang, T. Lyons, Z. Qian.
\newblock Backward stochastic dynamics on a filtered probability space.
\newblock{\em Ann. Prob.,} 39 (4): 1422--1448, 2011.

\bibitem{Lindquist/Picci}
A. Lindquist, G. Picci.
\newblock On the Stochastic Realization Problem.
\newblock{\em SIAM J. Control. Optim.,} 17 (3): 365-389, 1979.

\bibitem{Lott}
J. Lott.
\newblock Some Geometric Calculations on Wasserstein Space.
\newblock{\em Commun. Math. Phys.,}  277: 423--437, 2008.

\bibitem{Margarint}
V. Margarint.
\newblock Continuity of Zero-Hitting Times of Bessel Processes and Welding Homeomorphisms of SLE$_\kappa$.
\newblock{\em ALEA. Lat. Am. J. Prob. Math. Stat.,} 18: 69--79, 2021.

\bibitem{McCann}
R. J. McCann.
\newblock A Convexity Principle for Interacting Gases.
\newblock{\em Adv. Math.,} 128 (1): 153--179, 1997.

\bibitem{Meyer}
P. A. Meyer.
\newblock Sur une transformation du mouvement brownien due à Jeulin et Yor.
\newblock{\em Sémi. Prob. XXVIII.,} 1583: 98--101, 1994.

\bibitem{Napolitano/Sakurai}
J. Napolitano, J. J. Sakurai.
\newblock Modern Quantum Mechanics.
\newblock{\em Cam. Univ. Press.,} 2nd ed., 2017.

\bibitem{Nelson}
E. Nelson.
\newblock Dynamical Theories of Brownian Motion.
\newblock{\em Princeton. Univ. Press. Math. Note.,} 1967.

\bibitem{Oliveira/Tomé}
M. J. de Oliveira, T. Tomé.
\newblock Stochastic Dynamics and Irreversibility.
\newblock{\em Grad. Text. Phys.,} 2015.

\bibitem{Ornstein/Uhlenbeck}
L. S. Ornstein, G. E. Uhlenbeck.
\newblock On the Theory of the Brownian Motion.
\newblock{\em Phys. Rev.,} 36: 823--841, 1930.

\bibitem{Otto2}
F. Otto.
\newblock Evolution of microstructure in unstable porous media flow: A relaxational approach.
\newblock{\em Comm. Pure. Appl. Math.,} 52 (7): 873--915, 1999.

\bibitem{Otto}
F. Otto.
\newblock The geometry of dissipative evolution equations: the porous medium equation.
\newblock{\em Comm. Part. Diff. Equat.,} 26 (1--2): 101--174, 2001.

\bibitem{Otto/Villani}
F. Otto, C. Villani.
\newblock Generalization of an Inequality by Talagrand and Links with the Logarithmic Sobolev Inequality.
\newblock{\em J. Funct. Analy.,} 173 (2): 361--400, 2000.

\bibitem{Owen}
S. Owen.
\newblock Deadwood: The Barren Tree from Yü Hsin to Han Yü.
\newblock{\em Chin. Lit. Essay. Artic. Rev.,} 1 (2): 157--179, 1979.

\bibitem{Panaretos/Zemel}
V. M. Panaretos, Y. Zemel.
\newblock An Invitation to Statistics in Wasserstein Space.
\newblock{\em  Springer. Brief. Prob. Math. Stat.,} 2020.

\bibitem{Pardoux}
E. Pardoux.
\newblock Smoothing of a diffusion process conditioned at final time.
\newblock{\em Sto. Diff. Syst. Lect. Note. Control. Inform. Sci.,} 43: 187-196, 2006.

\bibitem{Pass}
B. Pass.
\newblock Multi-marginal optimal transport: Theory and applications.
\newblock{\em ESAIM Math. Model. Numer. Analy.,} 49 (6): 1771--1790, 2015.

\bibitem{Prato/Menaldi/Tubaro}
G. D. Prato, J.-L. Menaldi, L. Tubaro.
\newblock Some Results of Backward Itô Formula.
\newblock{\em Sto. Analy. Appl.,} 25 (3): 679--703, 2007.

\bibitem{Protter}
P. E. Protter.
\newblock Stochastic Integration and Differential Equations.
\newblock{\em  Sto. Mod. and Appl. Prob.,}

\bibitem{Risken}
H. Risken.
\newblock The Fokker-Planck equation: Methods of solution and applications.
\newblock{\em Springer-Verlag, Berlin, Heidelberg.,} 2nd ed., 1989.

\bibitem{Rogers}
L. C. G. Rogers.
\newblock Smooth Transition Densities for One-Dimensional Diffusion.
\newblock{\em Bull. Lond. Math. Soc.,} 17 (2): 157--161, 1985.

\bibitem{Rogers/Williams}
L. C. G. Rogers, D. Williams.
\newblock Diffusions, Markov Processes and Martingales.
\newblock{\em Cam. Univ. Press.,} 2nd ed., 2000.

\bibitem{Rudin}
W. Rudin.
\newblock Real and Complex Analysis.
\newblock{\em High. Math. Ser.,} 1966.

\bibitem{Russo/Vallois}
F. Russo, P. Vallois.
\newblock Forward, backward and symmetric stochastic integration.
\newblock{\em Prob. Theo. Rel. Field.,} 97: 403--421, 1993.

\bibitem{Schrödinger0}
E. Schrödinger.
\newblock What is Life? With Mind and Matter and Autobiographical Sketches.
\newblock{\em Cam. Univ. Pres.,} 2013.

\bibitem{Schrödinger1}
E. Schrödinger.
\newblock Über die Umkehrung der Naturgesetze.
\newblock{\em Sitz.--Ber. d. Preuß. Akad. d. Wiss. -- Phys.--Math. Klasse.,} 144--153, 1931.

\bibitem{Schrödinger2}
E. Schrödinger.
\newblock Sur la théorie relativiste de l’électron et l’interprétation de la mécanique quantique.
\newblock{\em Ann. Inst. Henri Poincaré,} 2 (4): 269--310, 1932.

\bibitem{Schuss}
Z. Schuss.
\newblock Singular perturbation methods in stochastic differential equations of mathematical physics.
\newblock{\em SIAM Rev.,}
22: 119--155, 1980.

\bibitem{Schweizer}
M. Schweizer.
\newblock Minimal Entropy Martingale Measure.
\newblock{\em Enc. Quant. Fin.,} 2010.

\bibitem{Shannon1}
C. E. Shannon.
\newblock A Mathematical Theory of Communication.
\newblock{\em Bell. Sys. Tech. J.,} 27 (3): 379--423, 1948.

\bibitem{Shannon2}
C. E. Shannon, W. Weaver.
\newblock The mathematical theory of information.
\newblock{\em Urbana. Univ. Illinois. Press.,} 97 (6): 128--164, 1949.

\bibitem{Stam}
A. J. Stam. 
\newblock Some Inequalities Satisfied by the Quantities of Information of Fisher and Shannon.
\newblock{\em Inf. Control.,}
2: 101--112, 1959.

\bibitem{Sturm1}
K.-T. Sturm.
\newblock On the geometry of metric measure spaces I.
\newblock{\em Acta Math.,} 196 (1): 65--131, 2006.

\bibitem{Sturm2}
K.-T. Sturm.
\newblock On the geometry of metric measure spaces II.
\newblock{\em Acta Math.,} 196 (1): 133--177, 2006.

\bibitem{Sudakov}
B. Sudakov.
\newblock Recent Developments in Extremal Combinatorics: Ramsey and Turán Type Problems.
\newblock{\em Proc. Int. Cong. Math,.} 2579--2606, 2011.

\bibitem{Temme}
N. M. Temme.
\newblock Special Functions: An Introduction to the Classical Functions of Mathematical Physics.
\newblock{\em Wiley Intersci.,} 1996.

\bibitem{Thurner/Corominas-Murtra/Hanel}
S. Thurner, B. Corominas-Murtra, R. Hanel.
\newblock Three faces of entropy for complex systems: Information, thermodynamics, and the maximum entropy principle.
\newblock{\em Phys. Rev. E,} 96: 032124, 2017.

\bibitem{Tschiderer}
B. Tschiderer.
\newblock Pathwise Otto calculus and the likelihood ratio process as a backwards martingale.
\newblock{\em Univ. Wien Masterarbeit,} 2019.

\bibitem{Tschiderer/Yeung}
B. Tschiderer, L. C. Yeung.
\newblock A trajectorial approach to relative entropy dissipation of McKean–Vlasov diffusions: Gradient flows and HWBI inequalities.
\newblock{\em Bernoulli.,} 29 (1): 725--756, 2023.

\bibitem{Tu}
L. W. Tu.
\newblock Differential Geometry: Connections, Curvature, and Characteristic Classes.
\newblock{\em Grad. Text. Math.,} 275, 2017.

\bibitem{Vaseršteĭn}
L. Vaseršteĭn.
\newblock Markov processes over denumerable products of spaces, describing large systems of automata".
\newblock{\em Problemy Peredači Informacii,} 5 (3): 64--72, 1969.

\bibitem{Villani}
C. Villani.
\newblock Topics in Optimal Transportation.
\newblock{\em Grad. Stud. Math.,} 58, 2003.

\bibitem{Villani2}
C. Villani.
\newblock Optimal Transport -- Old and New.
\newblock{\em Grundlehren Math. Wiss.,} 338, 2009.

\bibitem{Walsh}
J. B. Walsh. 
\newblock A non reversible semi-martingale.
\newblock{\em Sémin. Prob. XVI. Lect. Note. Math.,} 920, 1982.

\bibitem{Williams}
D. Williams.
\newblock Probability with Martingales.
\newblock{\em Cam. Univ. Press.,} 1991.

\bibitem{Witten}
E. Witten.
\newblock Fermion path integrals and topological phases.
\newblock{\em Rev. Mod. Phys.,} 88: 035001, 2016.

\bibitem{Witten2}
E. Witten.
\newblock The "Parity" Anomaly On An Unorientable Manifold.
\newblock{\em Phys. Rev. B,} 94: 195150, 2016.

\bibitem{Zumbach}
G. Zumbach.
\newblock 
Time reversal invariance in finance.
\newblock{\em Quant. Fin.,} 9 (5): 505--515, 2009.





\end{thebibliography}
\begin{spacing}{1}

\end{spacing}

\end{document}